\documentclass[twoside]{article}

\usepackage[margin=1in]{geometry}

\raggedbottom

\title{A Bayesian Approach to Estimating Background Flows from a Passive Scalar}
\author{Jeff Borggaard, Nathan E. Glatt-Holtz, Justin A. Krometis\\
  \scriptsize{emails: jborggaard@vt.edu, negh@tulane.edu, jkrometis@vt.edu}}
\date{}

\usepackage{amsfonts, amssymb, amsmath, amsthm, multicol,bbm}
\usepackage[colorlinks=true, pdfstartview=FitV, linkcolor=blue,
            citecolor=blue, urlcolor=blue]{hyperref}
\usepackage[usenames]{color}
\definecolor{Red}{rgb}{0.7,0,0.1}
\definecolor{Green}{rgb}{0,0.7,0}
\usepackage{accents}
\usepackage{comment,url}

\pagestyle{myheadings}
\numberwithin{equation}{section}

\usepackage{enumitem}
\usepackage[colorinlistoftodos]{todonotes}
\usepackage[capitalize,nameinlink,noabbrev]{cleveref} 
\usepackage[section]{algorithm}      
\usepackage{algpseudocode}    
\usepackage{wrapfig}                            
\usepackage{tabularx}
\usepackage{bigints}

\usepackage{tikz}
\usetikzlibrary{shapes,arrows,positioning}

\usepackage{mathtools} 
\usepackage{xparse}
\usepackage{mathrsfs}

\newcommand{\R}{\mathbb{R}}           
\newcommand{\C}{\mathbb{C}}           
\newcommand{\Z}{\mathbb{Z}}           
\newcommand{\ddt}{\frac{d}{dt}}
\newcommand{\half}{\frac{1}{2}}
\newcommand{\recip}[1]{\frac{1}{{#1}}}


\newcommand{\Frechet}{Fr\'echet~}

\DeclarePairedDelimiterX\innerp[2]{\langle}{\rangle}{#1,#2}
\newcommand{\ip}[2]{ \innerp{#1}{#2} }
\newcommand{\ipZ}[3]{ \innerp{#1}{#2}_{#3} }

\newcommand{\norm}[1]{\left\lVert#1\right\rVert}

\NewDocumentCommand\Exp{g}{%
    \mathbb{E}\IfNoValueTF{#1}{}{\left[ #1 \right]}%
}

\NewDocumentCommand\Prob{g}{%
    \mathbb{P}\IfNoValueTF{#1}{}{\left[ #1 \right]}%
}

\newcommand{\x}{\mathbf{x}}
\newcommand{\e}{\mathbf{e}}
\newcommand{\kbf}{\mathbf{k}}
\newcommand{\ek}{\e_{\kbf}}
\newcommand{\spatdom}{\mathbb{T}^2}

\newcommand{\conductivity}{\kappa}
\newcommand{\pdesol}{\theta}
\newcommand{\pdesolvec}{\vec{\pdesol}}
\newcommand{\vfieldnobf}{v}
\newcommand{\vfield}{\mathbf{\vfieldnobf}}
\newcommand{\vk}{\vfieldnobf_{\kbf}}
\newcommand{\vtrue}{\vfield^{\star}}
\newcommand{\vfspace}{H}

\newcommand{\G}{\mathcal{G}}
\newcommand{\Obs}{\mathcal{O}}
\newcommand{\Sol}{\mathcal{S}}
\newcommand{\Q}{\mathbb{Q}}
\newcommand{\data}{\mathcal{Y}}

\newcommand{\dataspace}{Y}
\newcommand{\noise}{\eta}

\newcommand{\noisemeasure}{\gamma_0}
\newcommand{\covar}{\mathcal{C}}

\newcommand{\mcmcsamp}{\vfield}         
\newcommand{\mcmccand}{\tilde{\vfield}} 


\newcommand{\indFn}[1]{1 \! \! 1_{#1}}
\newcommand{\E}{\mathbb{E}}
\newcommand{\Prb}{\mathbb{P}}
\newcommand{\Fd}{\mathcal{F}}
\newcommand{\RR}{\mathbb{R}}

\newcommand{\QQ}{\mathbb{Q}}

\newcommand{\urv}{\mathbf{V}}
\newcommand{\usm}{\mathbf{v}}
\newcommand{\usp}{H}
\newcommand{\yrv}{\mathcal{Y}}
\newcommand{\ysm}{y}
\newcommand{\ysp}{\RR^N}
\newcommand{\nrv}{\eta}

\newcommand{\nsp}{\RR^M}
\newcommand{\rcd}{\gimel}

\newtheorem{Thm}{Theorem}[section]
\newtheorem{Lem}[Thm]{Lemma}
\newtheorem{Prop}[Thm]{Proposition}

\newtheorem{Ex}[Thm]{Example}
\newtheorem{Def}[Thm]{Definition}
\newtheorem{Rmk}[Thm]{Remark}

\newtheorem{Assum}[Thm]{Assumption}
 
\newtheorem{Not}[Thm]{Notation}

\begin{document}

\markboth{J. Borggaard, N. Glatt-Holtz, J. Krometis}
{A Bayesian Approach to Estimating Background Flows from a Passive Scalar}

\maketitle

\begin{abstract}
We consider the statistical inverse problem of estimating a background flow field (e.g., of air or water) from the partial and noisy observation of a passive scalar (e.g., the concentration of a solute), a common experimental approach to visualizing complex fluid flows.  Here the unknown is a vector field that is specified by a large or infinite number of degrees of freedom. Since the inverse problem is ill-posed, i.e., there may be many or no background flows that match a given set of observations, we adopt a Bayesian approach to regularize it. In doing so, we leverage frameworks developed in recent years for infinite-dimensional Bayesian inference. The contributions in this work are threefold. First, we lay out a functional analytic and Bayesian framework for approaching this problem. Second, we define an adjoint method for efficient computation of the gradient of the log likelihood, a key ingredient in many numerical methods. Finally, we identify interesting example problems that exhibit posterior measures with simple and complex structure. We use these examples to conduct a large-scale benchmark of Markov Chain Monte Carlo methods developed in recent years for infinite-dimensional settings. Our results indicate that these methods are capable of resolving complex multimodal posteriors in high dimensions.
\end{abstract}

{\noindent \small {\it \bf Keywords:} Bayesian Statistical Inversion,
  Markov Chain Monte Carlo (MCMC), Passive
  Scalars, Fluid Turbulence}

\setcounter{tocdepth}{1}
\tableofcontents

\newpage

\section{Introduction}
\label{sec:introduction}\label{intro}
A common approach to investigating complex fluid flows is through
measurement of a substance moving within the fluid. For example, dye,
smoke, or neutrally-buoyant particles are injected into fluids to
visualize vortices or other structures in turbulent flows
\cite{karch2012dye,kellay2002two,merzkirch1987flow,smits2012flow}. In
this work we consider the inverse problem of estimating a background
fluid flow from partial, noisy observations of a dye, pollutant, or
other solute advecting and diffusing within the fluid. The initial
condition is assumed to be known, so the problem can be interpreted as
a controlled experiment, where a substance is added at known locations
and then observed as the system evolves to investigate the structure
of the underlying flow.

The physical model considered is the two-dimensional
advection-diffusion equation on the periodic domain
$\spatdom=[0,1]^2$:
\begin{equation} \label{eq:adr}
  \frac{\partial}{\partial t}{\pdesol}(t,\x) = -\vfield(\x) \cdot \nabla \pdesol(t,\x) + \conductivity \Delta \pdesol(t,\x) 
  \quad \text{ , } \quad \pdesol(0,\x) = \pdesol_0(\x).
\end{equation}
Here 
\begin{itemize}
\item $\pdesol: \R^{+} \times \spatdom \to \R$ is a \emph{passive
    scalar}, typically the concentration of some solute of interest,
  which is spread by diffusion and the motion of a
  (time-stationary) fluid flow $\vfield$. This solute is ``passive''
  in that it does not affect the motion of the underlying fluid.
\item $\vfield: \spatdom \to \R^2$ is an \emph{incompressible
    background flow}, i.e., $\vfield$ is constant in time and
  satisfies $\nabla \cdot \vfield = 0$.
\item $\conductivity>0$ is the \emph{diffusion coefficient}, which
  models the rate at which local concentrations of the solute spread
  out within the solvent in the absence of advection.
\end{itemize}

We obtain finite observations $\data \in \dataspace$ (e.g., $\R^N$ or
$\C^N$) subject to additive noise $\noise$, i.e.
\begin{equation} \label{eq:Gmap}
	\data = \G(\vfield)+\noise
  \quad \text{ , } \quad \noise \sim \noisemeasure
\end{equation}
for some measure $\noisemeasure$ related to the precision of the
observations. Here, $\G: \vfspace \to \dataspace$ is the
\emph{parameter-to-observable}, or forward, map. This $\G$ associates the
background flow $\vfield$, sitting in a suitable function space
$\vfspace$, with a finite collection of measurements (observables) of
the resulting $\pdesol=\pdesol(\vfield)$. The observations may take a
number of forms, such as:
\begin{itemize}
\item Spatial-temporal point observations:
  $\G_{j}(\vfield) = \pdesol(t_j, \x_j,\vfield)$ for $t_j \in [0,T]$ and
  $\x_j \in [0,1]^2$.
\item Spectral components:
  $\G_{j}(\vfield) =
  \ipZ{\pdesol(t_j,\cdot,\vfield)}{\e_{\kbf_j}}{L^2(\spatdom)}$
  for some basis $\{\e_{\kbf}\}$ of the scalar field $\pdesol$.
\item Local averages:
  $\G_j(\vfield) = \frac{1}{|\mathcal{D}_j|} \int_{\mathcal{D}_j}
  \pdesol(t,\x,\vfield) d\x\,dt$,
  for sub-domains $\mathcal{D}_j \subset [0,T] \times [0,1]^2$
  where $|\mathcal{D}_j|$ denotes the volume of $\mathcal{D}_j$.
\item Other physical quantities of interest from $\pdesol$, such as
  variance, dissipation rate, or structure functions.
\end{itemize}

This work will focus on point observations as the most obvious
practical implementation. However, we note that the methodology
outlined in this manuscript is quite general. Moreover, while we have assumed a
divergence-free flow, point observations, and periodic boundary
conditions, the framework herein could be adapted to other assumptions
via a different definition of the forward map $\G$.
 
As we illustrate below, the proposed inverse problem is ill-posed,
i.e., there may be many or no background flows $\vfield$ that match a
given dataset $\data$. To address this issue, we adopt a Bayesian
approach, incorporating prior knowledge of background flows and descriptions
of the observation error to develop probabilistic estimates of
$\vfield$. Summaries of the Bayesian approach to inverse problems can
be found in \cite{gelman2014bayesian} and
\cite{kaipio2005statistical}. Moreover, since the target of the
inversion, the background flow $\vfield$, is infinite-dimensional,
this work will leverage the considerable amount of recent research in
infinite-dimensional Bayesian inference, grounding much of our
approach in the overview of the field provided in
\cite{dashti2017bayesian}. 
To compute observables, such as the mean, variance, or
(normalized) histogram of a given quantity on $\vfield$ or $\pdesol$,
we use recently-developed Metropolis-Hastings Markov Chain Monte Carlo
(MCMC) algorithms that are well-defined in infinite dimensions. We focus on preconditioned Crank-Nicolson (pCN) \cite{cotter2013mcmc} and Hamiltonian (or Hybrid) Monte Carlo (HMC) \cite{beskos2011hybrid} samplers; some results for the independence sampler and Metropolis-adjusted Langevin (MALA) methods (see descriptions in, e.g., \cite{dashti2017bayesian} and \cite{beskos2017geometric}, respectively) are provided in the appendices.

This work makes a number of important contributions. We lay out a
Bayesian framework for the estimation of divergence-free background
flows from observations of scalar behavior, a common experimental
approach to investigating complex fluid flows
\cite{karch2012dye,kellay2002two,merzkirch1987flow,smits2012flow}.
We define, prove, and numerically implement an adjoint
method for the efficient computation of the gradient of the log likelihood, a key ingredient in higher-order MCMC methods. Finally, we identify two interesting examples for which the
resulting posterior measures have very different structures - one
fairly simple and one highly multi-modal. 
We use these two examples to conduct a
systematic, large-scale numerical study to benchmark the convergence
of the MCMC methods mentioned above for ``easy'' and ``hard''
problems. This is a companion paper to \cite{borggaard2018consistency}, where we investigate the behavior of the posterior measure as the number of point observations grows large (see also \cite{krometis2018bayesian}), and \cite{borggaard2019particle}, where we identify a computationally-efficient approach to computing the forward map.

The structure of the paper is as follows. \Cref{sec:math_framework}
defines our parametrization of the space of divergence-free flows, describes why the inverse problem is
ill-posed in the traditional sense, and presents the Bayesian approach to the inverse problem. \Cref{sec:numericalmethods} describes the
numerical approach to computing the posterior measure:
MCMC methods for sampling from the posterior, numerical
methods for solving the advection-diffusion equation \eqref{eq:adr}, and an adjoint method for computing the
gradients required for some MCMC methods. \Cref{sec:results} provides 
results of the inference and convergence of MCMC methods as
applied to two example problems. 
For completeness, appendices provide additional numerical results (\cref{sec:results_is_mala} and \cref{sec:results_obs}) and a description of Bayesian inference in a very general setting (\cref{sec:bayes_general}).

\subsection{Literature Review} \label{sec:lit_review}
\paragraph{Bayesian Inference and MCMC}

Comprehensive overviews of modern Bayesian
techniques, from the basics of probability theory to computational
practicalities, can be found in \cite{gelman2014bayesian} and
\cite{kaipio2005statistical}. 
In the last ten years, much attention has been paid to development of the theory of Bayesian inference for infinite-dimensional problems (e.g., where the target of the inversion is a function). These advances are summarized in \cite{dashti2017bayesian}, building upon the work in \cite{stuart2010inverse}; we follow the former closely in \cref{sec:math_framework} and provide a somewhat more general derivation of Bayes' Theorem in \cref{sec:bayes_general}. 

Similarly, while Metropolis-Hastings Markov Chain Monte Carlo (MCMC) methods date to
the foundational works \cite{metropolis1953equation} and \cite{hastings1970monte}, substantial recent work has gone into extending these methods to
problems where the space to be sampled is high- or
infinite-dimensional
\cite{beskos2009optimal,beskos2008mcmc,beskos2009mcmc}. The goal of
these efforts has been to define sampling kernels that are both well-defined
and yield robust convergence even as the number of dimensions to be
sampled grows large. The extension of Metropolis-Hastings methods to
generalized state spaces was described in \cite{tierney1998note}. The
behavior of the traditional random walk approach as the dimension
grows large was investigated in \cite{mattingly2012diffusion} for a
broad class of target measures. The pCN and MALA
algorithms suitable for infinite-dimensional problems were laid out in
\cite{cotter2013mcmc}; the optimal choice of the step size parameter in
the MALA algorithm was shown in \cite{pillai2012optimal}. HMC was
similarly extended to infinite-dimensions in \cite{beskos2011hybrid},
work that was later generalized in \cite{beskos2017geometric} and
\cite{girolami2011riemann}. Dimension-independent convergence of some
of the above methods has been investigated by showing that the kernels
have spectral gaps
\cite{eberle2014error,hairer2014spectral,vollmer2015dimension},
leveraging a generalized version of Harris' Theorem
\cite{hairer2008spectral,hairer2011asymptotic,hairer2011yet,meyn1993markov}
for Markov kernels. The work in \cref{sec:results} represents one of the first attempts to benchmark these methods for an infinite-dimensional application.

In \cref{sec:ex2_multihump} we present a multimodal posterior measure, which MCMC methods have difficulty resolving. This has been a known problem with MCMC almost since its inception and a number of ideas have been proposed for improving sampling for these distributions. One example is tempering, in which a series of ``less steep'' distributions are used to try to increase the probability of jumps between modes; see, e.g., the description in \cite[Section 12.3]{gelman2014bayesian} and associated references. A related method is equi-energy sampling \cite{kou2006equi}, in which rings of parameter values associated with different energy levels are constructed and samples are allowed to jump within rings.

\paragraph{Advection-Diffusion}

The problem of observing scalar behavior to infer the underlying
velocity field is a common experimental approach for investigating the
structure of complex fluid flows. The textbooks
\cite{merzkirch1987flow} and \cite{smits2012flow} describe many such
methods, including examples where dye, smoke, temperature, hydrogen
bubbles, or photo-sensitive tracers are used. An overview of dye-based
visualization techniques is provided in \cite{karch2012dye}. An
application of dye to investigate two-dimensional turbulence is
described in \cite{williams1997mixing}; see the survey article
\cite{kellay2002two} for additional examples.

To our knowledge, this work is the first to apply Bayesian inference
to the problem of estimating a background fluid flow from measurements
of a passive scalar. However, a number of works, such as
\cite{akccelik2003variational}, have used inversion techniques to
determine a source (forcing) term in advection-diffusion problems. In
those previous works, the background flow was assumed to be known and
the initial condition assumed to be zero; their goal was to determine
the function (in particular the location) from which the pollutant was
dispersed. The source-identification work was extended to ensure
robustness to uncertainties in the velocity field in
\cite{zhuk2016source}.

More generally, the advection and diffusion of passive scalars has
been studied extensively. Numerical difficulties in modeling the
behavior of passive scalars for advection-dominated cases are
described in \cite{morton1996numerical} and
\cite{stynes2013numerical}. Passive scalars exhibit similar
behavior for turbulent and random flows, so the latter, simpler case
may be used to model the former. One such model was introduced by
Kraichnan~\cite{kraichnan1967inertial,kraichnan1968small,kraichnan1991stochastic}; the energy spectrum from this model motivates the construction of the prior measure in \cref{sec:results}.

\section{Mathematical Framework and Bayesian Inference}
\label{sec:math_framework}

In this section, we describe the mathematical framework of the inverse
problem (\ref{eq:Gmap}). We begin by defining the functional analytic
setting for the problem, including how we represent divergence-free
background flows. We then describe reasons why the inverse problem is
ill-posed, i.e., why a given set of measurements $\data$ cannot
identify a unique background flow $\vfield$ that generated them. We
close the section by defining the Bayesian approach to the inverse problem.

\subsection{Representation of Divergence-Free Background Flows}
\label{sec:modelDivFree}

The target of the inference is a divergence-free background flow
$\vfield$, so we start by describing the space $\vfspace$ of such
flows that we will consider.  For this purpose we begin by recalling
the Sobolev spaces of (scalar valued) periodic functions on the 
domain $\spatdom = [0,1]^2$
\begin{equation}
    H^s(\spatdom) 
    = \left\{ u : u = \sum_{\kbf \in \Z^2 \setminus \{\mathbf{0}\}} c_{\kbf} e^{2 \pi i \kbf \cdot \x }, 
      \, \overline{c_{\kbf}} = c_{-\kbf}, \, \| u\|_{H^s} < \infty \right\},
    \,  \| u\|_{H^s}^2 :=  \sum_{\kbf \in \Z^2} \norm{\kbf}^{2s} |c_{\kbf}|^2,
    \label{eq:Hs}
 \end{equation}
 defined for any $s \in \mathbb{R}$; see
 e.g.~\cite{robinson2001infinite, temam1995navier}.  We will abuse
 notation and use the same notation for periodic divergence-free
 background flows by replacing the coefficients $c_{\kbf}$ in
 \eqref{eq:Hs} with
\begin{equation}
  c_{\kbf} = \vk \frac{\kbf^\perp}{\norm{\kbf}_2}, 
  \quad \overline{\vk}=-\vfieldnobf_{-\kbf},
  \label{eq:reality}
\end{equation}
where $\kbf^\perp = [-k_y, k_x]$ to ensure
$\kbf \cdot \kbf^\perp = 0$.  Throughout what follows we fix our parameter space as
\begin{Not}[Parameter space, $\vfspace$]
  We consider background flows $\vfield \in \vfspace$, where
  $\vfspace = H^{m}(\spatdom)$ (see \eqref{eq:Hs}) for some $m > 1$, 
  with coefficients $c_{\kbf}$ given by \eqref{eq:reality}.
  \label{def:vfspace}
\end{Not}
Here the exponent $m$ is chosen so that vector fields in $\vfspace$,
  as well as their corresponding solutions $\pdesol(\vfield)$, exhibit
  continuity properties convenient for our analysis below (see
  \cref{def:adr_weak}).  We take $L^p(\spatdom)$ with
  $p \in [1,\infty]$ for the usual Lebesgue spaces and denote the
  space of continuous and $p$-th integrable, $X$-valued functions by
  $C([0,T];X)$ and $L^p([0,T]; X)$, respectively, for a given Banach space $X$.  All
  of these spaces are endowed with their standard topologies unless
  otherwise specified.

  In what follows we frequently consider Borel probability measures on
  $H$, denoted sometimes as $\mbox{Pr}(H)$, in
  reference to the prior and posterior measures produced by Bayes'
  theorem below.  A natural approach to construct certain classes
  of such infinite dimensional probability measures is to decompose them
  into one-dimensional probability measures acting independently on
  individual components of a sequence of elements sitting in an
  underlying function space; see e.g.~\cite[Section
  2]{dashti2017bayesian}. Concretely in our setting, probability
  measures on the space of divergence-free vector fields can be
  defined by letting $\vk$ be random fields, as long as $\vk$ exhibits
  suitable decay to zero as $\norm{\kbf} \to \infty$ commensurate with
  $\vfield \in \vfspace$ (see \cref{def:vfspace}). In particular we make use of this construction
  on the space of divergence-free vector fields to define
  prior distributions in the numerical examples in \cref{sec:results}.

\subsection{Mathematical Setting for 
  the Advection-Diffusion Equation}\label{sec:math_setting}
In this section, we provide a precise definition of solutions
$\pdesol$ for the advection-diffusion problem \eqref{eq:adr}.
Crucially, the setting we choose yields a map from $\vfield$ to
$\pdesol$ and then to observations of $\pdesol$ that is continuous.
\begin{Prop}[Well-Posedness and Continuity of the solution map for
  \eqref{eq:adr}]\label{def:adr_weak}
  \mbox{}
\begin{itemize}
  \item[(i)]
  Fix any $s \geq 0$ and $m \geq s$ with $m > 0$ and suppose that
  $\vfield \in H^{m}(\spatdom)$ and $\pdesol_0 \in H^{s}(\spatdom)$.
  Then there exists a unique $\pdesol = \pdesol(\vfield, \pdesol_0)$
  such that
  \begin{align*}
    \pdesol \in L^2_{loc}([0,\infty); H^{s+1}(\spatdom)) 
        \cap L^\infty([0,\infty); H^{s}(\spatdom))
    \quad \text{ with } \quad
    \frac{\partial \pdesol}{\partial t} 
    \in L^2_{loc}([0,\infty); H^{s-1}(\spatdom))
  \end{align*}
  so that in particular $\pdesol \in C([0,\infty); H^{s}(\spatdom))$ solves~(\ref{eq:adr}) at least weakly, namely
    \begin{equation}
      \ip{\frac{\partial\pdesol}{\partial t}{}}{\phi}_{H^{-1}\left( \spatdom \right) \times H^1(\spatdom)}  
      + \ip{\vfield \cdot \nabla \pdesol}{\phi}_{L^2\left( \spatdom \right)} 
      + \conductivity \ip{\nabla \pdesol}{\nabla \phi}_{L^2\left( \spatdom \right)} = 0
    \label{eq:adr_weak}
  \end{equation}
  for all $\phi \in H^1(\spatdom)$ and almost all time
  $t \in [0,\infty)$.  
  \item[(ii)]
  For any $T > 0$ the map that associates $\vfield \in H^m(\spatdom)$ and
  $\pdesol_0 \in H^s(\spatdom)$ to the corresponding
  $\pdesol(\vfield, \pdesol_0)$ is continuous relative the standard
  topologies on $H^m(\spatdom) \times H^s(\spatdom)$ and
  $C\left( [0,T] \times H^s(\spatdom) \right)$.
  \item[(iii)]
  For any $T > 0, m \ge s > 1$ the map which associates $\vfield \in H^m(\spatdom)$ and
  $\pdesol_0 \in H^s(\spatdom)$ to the corresponding
  $\pdesol(\vfield, \pdesol_0)$ is continuous relative the standard
  topologies on $H^m(\spatdom) \times H^s(\spatdom)$ and
  $C\left( [0,T] \times \spatdom \right)$.
\end{itemize}
\end{Prop}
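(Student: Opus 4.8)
The plan is to establish the well-posedness and continuity claims in the order (i), (ii), (iii), treating (i) by a standard Galerkin approximation and energy estimate scheme, (ii) by a difference estimate on two solutions, and (iii) as an upgrade of (ii) via Sobolev embedding. The central structural fact that makes everything work is that $\vfield$ is divergence-free, so that the advection operator is skew-symmetric: for sufficiently regular $\pdesol$ one has $\ip{\vfield \cdot \nabla \pdesol}{\pdesol}_{L^2(\spatdom)} = 0$, which kills the transport term in the basic energy balance and lets diffusion control the estimates.

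For part (i), I would first set up a Galerkin truncation of \eqref{eq:adr_weak}, projecting onto the span of the first $N$ Fourier modes $\ek$, which yields a finite-dimensional linear ODE system with a unique global solution $\pdesol^N$. The key a priori bounds come from testing against $\pdesol^N$ and against $(-\Delta)^s \pdesol^N$: in the first case skew-symmetry of advection gives $\tfrac{1}{2}\ddt \norm{\pdesol^N}_{L^2}^2 + \conductivity \norm{\nabla \pdesol^N}_{L^2}^2 = 0$, and integrating in time produces the $L^\infty_t H^s \cap L^2_t H^{s+1}$ bounds after the analogous computation at regularity $s$. The advection term at level $s$ is the place where the hypothesis $\vfield \in H^m$ with $m \geq s$ enters: one needs $\ip{(-\Delta)^{s/2}(\vfield \cdot \nabla \pdesol)}{(-\Delta)^{s/2}\pdesol}_{L^2}$ to be controlled by a product-estimate (Kato--Ponce / fractional Leibniz type) of the form $\norm{\vfield}_{H^m}\norm{\pdesol}_{H^{s+1}}\norm{\pdesol}_{H^s}$, absorbed via Young's inequality into the diffusive term. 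The bound on $\partial_t \pdesol \in L^2_{loc} H^{s-1}$ then follows directly from the equation. Passing to the limit $N \to \infty$ by weak/weak-$*$ compactness gives existence; the continuity in time $\pdesol \in C([0,\infty);H^s)$ follows from an Aubin--Lions / interpolation argument, and uniqueness follows from the difference estimate described next.

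For part (ii), I would take two data pairs $(\vfield_1,\pdesol_0^1)$, $(\vfield_2,\pdesol_0^2)$ with corresponding solutions $\pdesol_1,\pdesol_2$ and set $w = \pdesol_1 - \pdesol_2$, which solves an advection-diffusion equation with advection by $\vfield_1$ plus a forcing term $(\vfield_2 - \vfield_1)\cdot\nabla\pdesol_2$. Testing the difference equation against $w$ at regularity $s$ and again exploiting skew-symmetry of the $\vfield_1$-transport term, a Gronwall argument on $\norm{w}_{H^s}^2$ bounds the solution difference on $[0,T]$ by $\norm{\pdesol_0^1 - \pdesol_0^2}_{H^s}$ plus $\norm{\vfield_1 - \vfield_2}_{H^m}$ times an a priori bound on $\norm{\pdesol_2}$ coming from part (i). This yields Lipschitz, hence continuous, dependence into $C([0,T];H^s)$. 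Part (iii) is then immediate: under the strengthened hypothesis $s > 1$, the Sobolev embedding $H^s(\spatdom) \hookrightarrow C(\spatdom)$ in two dimensions upgrades convergence in $C([0,T];H^s)$ to convergence in $C([0,T]\times\spatdom)$, so the solution map is continuous into the stated topology. The main obstacle I anticipate is the advection estimate at general fractional regularity $s$: controlling $\ip{\vfield\cdot\nabla\pdesol}{\pdesol}_{H^s}$ requires the right commutator/product inequality and the careful bookkeeping that the hypothesis $m \geq s$ is exactly what makes the nonlinearity subordinate to diffusion, whereas the energy balance at $s=0$ and the embedding step in (iii) are comparatively routine.
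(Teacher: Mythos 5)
The paper does not actually prove this proposition: it states the result and remarks only that ``a sketch of the proof is provided in \cite{borggaard2018consistency}'', so there is no in-paper argument to compare yours against. Your Galerkin/energy scheme --- skew-symmetry of the divergence-free transport term at the $L^2$ level, a commutator/product (Kato--Ponce type) estimate at regularity $s$ absorbed into the diffusion via Young and Gr\"onwall, a difference estimate giving Lipschitz dependence for (ii), and the two-dimensional embedding $H^s(\spatdom)\hookrightarrow C(\spatdom)$ for $s>1$ in (iii) --- is the standard route for this linear parabolic problem and is consistent with the references the paper points to (e.g.\ \cite{temam1995navier,robinson2001infinite}); the one step demanding genuine care, which you correctly flag as the main obstacle, is that for small $m>0$ the velocity need not be bounded, so the advection estimate must exploit the cancellation $\ip{\vfield\cdot\nabla\Lambda^s\pdesol}{\Lambda^s\pdesol}_{L^2}=0$ and run the remaining commutator through a trilinear Sobolev--H\"older/paraproduct argument (which does close for all $m>0$, $m\ge s$) rather than a crude $L^\infty$ bound on $\vfield$.
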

A sketch of the proof is provided in \cite{borggaard2018consistency}.

\begin{Rmk}
  \label{rmk:Illposs}
  Since the background flow $\vfield$ enters \eqref{eq:adr} through
  the $\vfield \cdot \nabla \pdesol$ term, the inverse problem of
  recovering $\vfield$ from $\pdesol(\vfield)$ can be ill-posed.  One
  important class of examples illustrating this difficulty arises
  when $\vfield \cdot \nabla \pdesol$ is zero everywhere, in which
  case the fluid flow does not have any influence on $\pdesol$. Two such
  examples are as follows:
\begin{itemize}
\item[(i)] \underline{Ill-posedness: Laminar Flow:}
 Let $\pdesol_0(\x)$ be independent of $y$ and
    $\vtrue = \left[ 0,f(x) \right]$. Then
    $\pdesol(\vtrue)=\pdesol(\vfield)$ for any
    $\vfield=\left[ 0,g(x) \right]$.
  \item[(ii)] \underline{Ill-posedness: Radial Symmetry:} Set
    $\pdesol_0(\x)\propto \sin (\pi x) + \sin(\pi y)$ and
    $\vtrue = [\cos(\pi x),$ $-\cos(\pi y)]$. Then
    $\pdesol(\vtrue)=\pdesol(\vfield)$ for any
    $\vfield = c \vtrue, \,c \in \R$.
\end{itemize}
\noindent In these cases, the even noiseless and complete spatial/temporal
observations of $\pdesol$ cannot discriminate between a range
of background flows, making it impossible to uniquely identify a true
background flow $\vtrue$.
\end{Rmk}

With this general result in hand we now fix some notation used
for the remainder of the paper.
\begin{Def}[Solution Operator $\Sol$, Observation Operator $\Obs$]
  Fix $\pdesol_0 \in H^s(\spatdom)$ and a final time $T > 0$ and consider the phase
  space $\vfspace$ defined as in \cref{def:vfspace}.  The forward map $\G$ as
  in \eqref{eq:Gmap} is interpreted as the composition
  $\G(\vfield) = \Obs \circ \Sol(\vfield)$, where:
  \begin{enumerate}
  \item The \emph{solution operator}
    $\Sol:\vfspace \to C([0,T]; H^s(\spatdom))$ maps a
    given $\vfield$ to the corresponding solution $\pdesol(\vfield, \pdesol_0)$ 
    of \eqref{eq:adr} (in the sense of \cref{def:adr_weak}).
  \item The \emph{observation operator}
    $\Obs: C([0,T]; H^s(\spatdom) ) \to \dataspace$ measures some
    quantities (e.g. point measurements, spectral data, tracers) from
    $\pdesol$.  Here, in general, $\dataspace$ is a separable Hilbert
    space.  However, since we are primarily focused on the setting of
    finite observations, we typically have $\dataspace = \mathbb{R}^N$.
  \end{enumerate}
  \label{def:sol}
  \label{def:obs}
\end{Def}
To make a connection with the range of observations provided in the introduction, 
we detail the following possibilities for $\Obs$.
\begin{Ex}
  \label{ex:Obs:Op}
  We consider finite observations $\Obs(\pdesol) = (\Obs_1(\pdesol), \ldots, \Obs_N(\pdesol))$
  that could be
  \begin{enumerate}
  \item Spectral Observations:
    $\Obs_j(\pdesol) = \int_{[0,1]^2} \pdesol(t_j,\x) e_j d\x$, with
    $\{e_j\}_{j \geq 0}$ an orthonormal basis for $H^s$ and $t_j \in [0,T]$.
  \item Local averages:
    $\Obs_j(\pdesol) = \frac{1}{|\mathcal{D}_j|} \int_{\mathcal{D}_j}
    \pdesol \,d\x \,dt$,
    for any sub-domains $\mathcal{D}_j \subset [0,T] \times [0,1]^2$
    where $|\mathcal{D}_j|$ denotes the volume of $\mathcal{D}_j$.
  \item Spatial-temporal point observations:
      $\Obs_j(\pdesol) = \pdesol(t_j, \x_j)$ for any $t_j \in [0,T]$ and
      $\x_j \in [0,1]^2$. (Note that point observations are well-defined by \cref{def:adr_weak}, (iii).)
  \end{enumerate}
\end{Ex}
\noindent This paper focuses on final case of point observations
$\G_j(\vfield)=\pdesol(t_j,\x_j,\vfield),\, j=1,\dots,N$ as the most
obvious practical implementation for the advection-diffusion problem.

\subsection{Ill-posedness} \label{sec:illposed}
We note that the 
classical inverse problem of recovering $\vfield$ from data $\data$ (see \eqref{eq:Gmap}) is
highly ill-posed in a number of ways:
\begin{enumerate}
\item The data is incomplete, i.e., we do not observe $\pdesol$
  everywhere.  For this reason we are interested in forward maps
  $\G(\vfield)$ that are non-invertible and hence that do not
  uniquely specify $\vfield$.  One such example is provided in
  \cref{sec:ex2_multihump}.
\item Even if solutions $\pdesol$ of (\ref{eq:adr}) are observed
  everywhere in space and time, there are initial conditions
  $\pdesol_0$ such that any of a range of background flows
  $\vfield$ produce the same scalar field $\pdesol$. Two such examples
  are provided in \cref{rmk:Illposs} above.
\item Because of the observational noise $\eta$ in
  (\ref{eq:Gmap}), there may be no $\vfield$
  such that $\G(\vfield) = \data$ for given data $\data$.  For
  example, in the case of point observations
  $\data_j = \pdesol(t_j,\x_j,\vtrue)+\eta_j$, some realizations of
  $\eta_j$ may cause $\data_j$ to exceed the maximum value (or be less
  than the minimum value) of $\pdesol_0$. However, because
  $\nabla \cdot \vfield = 0$, \eqref{eq:adr} is a parabolic PDE that
  is subject to a maximum principle implying
  $\norm{\pdesol(t)}_{L^\infty(\spatdom)} \le
  \norm{\pdesol_0}_{L^\infty(\spatdom)}$
  for all $t > 0$. Thus there would be no $\vfield$ such that
  $\G(\vfield) = \data$.
\end{enumerate}
These considerations are typical of ill-posed inverse problems more
broadly. See, e.g.~\cite[Section 2]{kaipio2005statistical} or
\cite{stuart2010inverse} for further commentary.

\subsection{Bayesian Inference} \label{sec:bayes}
Following the Bayesian approach to inverse problems \cite{dashti2017bayesian,kaipio2005statistical}, instead of
seeking a single best match $\vtrue$, we take a statistical
interpretation of $\vfield$ and $\eta$ as random quantities that we
refer to as `the prior' and `the observation noise'.  The solution
of \eqref{eq:Gmap} is a probability measure, known as the
`posterior', associated with the conditional random variable
`$\vfield | \data$'.  The concentration
of the prior measure in the limit of a large number of observations,
i.e.~the question of consistency, is investigated in detail in \cite{borggaard2018consistency}. 
A quite general formulation of Bayes' Theorem is provided in \cref{sec:bayes_general}; in this section we follow closely the derivation in \cite{dashti2017bayesian}. We begin by imposing the following typical assumption:
\begin{Assum}
The joint distribution of the observation noise and the prior take the form $(\vfield,\eta) \sim \mu_0 \otimes\gamma_0$ for $\mu_0 \in \mbox{Pr}(H)$, $\gamma_0 \in \mbox{Pr}(\RR^N)$ so that $\vfield$ and $\eta$ are
statistically independent.
  \label{ass:prior:noise:ind}
\end{Assum}
\noindent Under \cref{ass:prior:noise:ind}, the `likelihood' $\Q_{\vfield}$, heuristically
$\data | \vfield$, is
\begin{Lem}[Likelihood $\Q_\vfield$]
  \label{def:Qv}
  For any deterministic background flow $\vfield \in \vfspace$ and
  observation noise $\noise \sim \noisemeasure$, the \emph{likelihood}
  $\Q_{\vfield}$ satisfies $\G(\vfield) + \noise \sim \Q_{\vfield}$ 
  so that for any $A \in \mathcal{B}(\RR^N)$,
  \begin{equation}
    \Q_{\vfield}(A) = \noisemeasure(\{\ysm - \G(\vfield) : \ysm \in A\}).
    \label{eq:Qv:2}
  \end{equation}
\end{Lem}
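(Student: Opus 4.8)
The statement identifies the likelihood $\Q_\vfield$ --- by definition the law of the observation $\G(\vfield)+\noise$ for a \emph{fixed}, deterministic $\vfield$ --- as the translate of the noise measure $\noisemeasure$ by the vector $\G(\vfield)$. Since $\vfield$ is held fixed, $\G(\vfield)+\noise$ is merely a deterministic shift of the random variable $\noise$, so the plan is to compute its distribution directly as the pushforward of $\noisemeasure$ under this shift.

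First I would note that, because $\vfield \in \vfspace$ is deterministic, \cref{def:adr_weak} --- in particular part (iii), which guarantees that point observations are well-defined --- ensures $\G(\vfield)$ is a fixed element of $\dataspace = \RR^N$. Hence the translation $T : \RR^N \to \RR^N$ given by $T(\ysm) = \G(\vfield) + \ysm$ is a homeomorphism, so it is Borel measurable and maps Borel sets to Borel sets. This is the one point worth recording carefully, as it certifies that the set $\{\ysm - \G(\vfield) : \ysm \in A\} = T^{-1}(A)$ appearing on the right-hand side of \eqref{eq:Qv:2} is itself Borel, and hence that the expression is well-defined for every $A \in \mathcal{B}(\RR^N)$.

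The computation is then immediate. For any $A \in \mathcal{B}(\RR^N)$,
\begin{equation*}
  \Q_\vfield(A) = \Prob{\G(\vfield) + \noise \in A} = \Prob{\noise \in T^{-1}(A)} = \noisemeasure\!\left( T^{-1}(A) \right),
\end{equation*}
where the second equality uses that $\G(\vfield)+\noise \in A$ if and only if $\noise \in A - \G(\vfield) = T^{-1}(A)$, and the third uses $\noise \sim \noisemeasure$. Since $T^{-1}(A) = \{\ysm - \G(\vfield) : \ysm \in A\}$, this is exactly \eqref{eq:Qv:2}; equivalently, $\Q_\vfield$ is the pushforward $T_* \noisemeasure$ of the noise measure under the translation $T$.

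I do not expect any genuine obstacle here: the entire content is the observation that, for fixed $\vfield$, the likelihood is a pure translate of the observation-noise measure, and the only mild subtlety --- Borel measurability of the translated set --- is handled by the fact that translations are homeomorphisms of $\RR^N$. The remainder is just bookkeeping of the event $\{\G(\vfield)+\noise \in A\}$.
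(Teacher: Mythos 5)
Your proposal is correct and takes the only natural route: the paper states \cref{def:Qv} without proof, treating it as the immediate observation that for fixed $\vfield$ the law of $\G(\vfield)+\noise$ is the pushforward of $\noisemeasure$ under the translation by $\G(\vfield)$, which is exactly the computation you carry out. Your added remark on Borel measurability of the translated set is a harmless (and correct) bit of extra care beyond what the paper records.
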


With the form of the likelihood measure $\Q_{\vfield}$ in hand we
introduce the following notational convention used several times
below
\begin{Not}[True background flow, $\vtrue$]
  We frequently fix a ``true'' background flow by $\vtrue \in H$.  For the given
  $\vtrue$, the observed data 
  $\data = \G(\vtrue) + \noise$
  can be viewed as draws from the  distribution $\Q_{\vtrue}$
  (though $\vtrue$ is not necessarily the only $\vfield$ that
  could produce such data).
  \label{def:vtrue}
\end{Not}

As in \cite{dashti2017bayesian}, we make the following assumption:
\begin{Assum}
  The likelihood $\Q_{\vfield}$ (see \cref{def:Qv}) is absolutely
  continuous with respect to the noise measure $\noisemeasure$ for all
  $\vfield \in \vfspace$.
  \label{ass:Q0_Qv_abscon}
\end{Assum}
\noindent We note that this assumption holds when $\noisemeasure$ is any
continuously distributed measure, such as a (non-degenerate) Gaussian,
that has the whole space $\RR^N$ as its support. (We also note in \cref{thm:bayes_general} that $\noisemeasure$ is not the only suitable choice of reference measure.) Then we define:

\begin{Def}[Potential, $\Phi$]
  When \cref{ass:Q0_Qv_abscon} holds, the \emph{potential} or
  \emph{negative log-likelihood}
  $\Phi: \vfspace \times \RR^N \to \R$ is defined as
  \begin{equation} \label{eq:pot_def}
  	\Phi(\vfield;\data)
        = -\log\left(\frac{d\Q_{\vfield}}{d\noisemeasure}(\data)\right)
  \end{equation}
  where $\frac{d\Q_{\vfield}}{d\noisemeasure}$ is the Radon-Nikodym derivative 
  of $\Q_{\vfield}$ with respect to $\noisemeasure$.
  \label{def:phi}
\end{Def}

\begin{Ex}[Gaussian Noise]\label{ex:gaussian_noise}
If the observation noise is a centered Gaussian, 
i.e.~$\noisemeasure = N(0,\mathcal{C}_\noise)$, then by \eqref{eq:Qv:2} we have 
(up to a factor independent of $\vfield$)
\begin{equation} \label{eq:pot_def_gaussian}
	\Phi(\vfield;\data) = \half \norm{ \mathcal{C}_\noise^{-1/2} \left( \data - \G(\vfield) \right) }^2.
\end{equation}
\end{Ex}

Finally, we have the following adaptation of Bayes' Theorem to the
advection-diffusion problem:
\begin{Thm}[Bayes' Theorem, \cite{dashti2017bayesian}]
	\label{thm:Bayes:AD}
        Let $\Q_{\vfield}$ and $\Phi$ be defined as in \cref{def:Qv}
        and \cref{def:phi}, respectively, and let $\Q_{\vfield}$
        satisfy \cref{ass:Q0_Qv_abscon}.
  Suppose that $\Phi$ is measurable in $\vfield$ and $\data$ and that
  \begin{equation} \label{eq:bayes_gt0_cond}
  	Z = \int \exp\left( - \Phi(\vfield;\data) \right) \mu_0(d\vfield) > 0.
  \end{equation}
  Then the measure $\mu_\data$ associated with the random variable
  $\vfield | \data$ is absolutely continuous with respect to $\mu_0$,
  with Radon-Nikodym derivative
\begin{equation}
	\frac{d\mu_{\data}}{d\mu_0}(\vfield) = \recip{Z} \exp\left( - \Phi(\vfield;\data) \right).
  \label{eq:bayes}
\end{equation}
\end{Thm}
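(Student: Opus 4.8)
The plan is to realize $\mu_\data$ as a regular conditional distribution of $\vfield$ given $\data$ under the joint law of the pair $(\vfield,\data)$, and then to extract its Radon--Nikodym derivative against $\mu_0$ via a change-of-reference-measure (disintegration) argument. First I would build the joint measure on $H \times \R^N$. By \cref{ass:prior:noise:ind} the pair $(\vfield,\noise)$ has law $\mu_0\otimes\noisemeasure$, and the measurable map $(\vfield,\noise)\mapsto(\vfield,\G(\vfield)+\noise)$ pushes this forward to the joint law $\nu$ of $(\vfield,\data)$. Testing against an arbitrary bounded measurable $h$ and using the definition of the likelihood \eqref{eq:Qv:2} (together with \cref{ass:Q0_Qv_abscon} and \cref{def:phi}, which give $e^{-\Phi(\vfield;\cdot)} = d\Q_\vfield/d\noisemeasure$), I would establish
\[
\int h\, d\nu = \int_H\int_{\R^N} h(\vfield,\data)\,\Q_\vfield(d\data)\,\mu_0(d\vfield) = \int_H\int_{\R^N} h(\vfield,\data)\,e^{-\Phi(\vfield;\data)}\,\noisemeasure(d\data)\,\mu_0(d\vfield),
\]
so that $\nu$ is absolutely continuous with respect to $\nu_0 := \mu_0\otimes\noisemeasure$ with density $e^{-\Phi(\vfield;\data)}$.

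Next I would confirm the integrability needed to invoke Fubini: since $e^{-\Phi(\vfield;\cdot)} = d\Q_\vfield/d\noisemeasure$ integrates to $\Q_\vfield(\R^N)=1$ for each fixed $\vfield$, the function $e^{-\Phi}$ lies in $L^1(\nu_0)$ with total mass $1$. This both legitimizes the interchange of integrals above and shows that the $\data$-marginal of $\nu$ is $\nu_\data(d\data) = Z(\data)\,\noisemeasure(d\data)$, where $Z(\data) = \int_H e^{-\Phi(\vfield;\data)}\mu_0(d\vfield)$. Defining the candidate posterior by $\frac{d\mu_\data}{d\mu_0}(\vfield) = Z^{-1} e^{-\Phi(\vfield;\data)}$ (well defined precisely because \eqref{eq:bayes_gt0_cond} gives $Z = Z(\data) > 0$), I would verify the defining property of the conditional distribution by checking, for all bounded measurable $f$ on $H$ and $g$ on $\R^N$, that
\[
\int_{\R^N} g(\data)\Big(\int_H f(\vfield)\,\mu_\data(d\vfield)\Big)\,\nu_\data(d\data) = \int_{H\times\R^N} f(\vfield)g(\data)\,d\nu.
\]
Expanding both sides with the densities just obtained and applying Fubini collapses this to an identity, establishing that $\mu_\data$ is a version of the regular conditional distribution of $\vfield$ given $\data$; such a regular version exists because $H$ is separable, hence Polish.

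The main subtlety, and the step I would treat most carefully, is the passage from the almost-everywhere statement furnished by disintegration (the identity above determines the conditional only for $\nu_\data$-a.e.\ $\data$) to the pointwise formula \eqref{eq:bayes} asserted at the observed datum. The resolution is that the explicit expression $Z(\data)^{-1}e^{-\Phi(\vfield;\data)}$ provides a distinguished version that is simultaneously a valid conditional density for \emph{every} $\data$ with $Z(\data)>0$, so the hypothesis $Z>0$ is exactly what pins the formula down at the data in hand; I would also record that the joint measurability of $\Phi$ assumed in the statement is what makes $(\vfield,\data)\mapsto e^{-\Phi}$ a legitimate density and renders $Z(\data)$ measurable in $\data$. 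The remaining ingredients, namely existence of regular conditional probabilities and the Fubini interchanges, are routine given separability of $H$ and the $L^1(\nu_0)$ bound, so no further hard analysis is required: the content of the theorem lies entirely in the change-of-reference-measure identity $d\nu/d\nu_0 = e^{-\Phi}$ combined with the elementary conditioning computation.
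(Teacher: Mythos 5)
Your proposal is correct and follows essentially the same route as the paper: the paper defers to the general disintegration argument of \cref{thm:bayes_general} in \cref{sec:bayes_general}, which likewise changes reference measure to obtain the density $e^{-\Phi}$ (there written as $\frac{d\Q_{\usm}}{d\gamma}$), and verifies the defining identity of the regular conditional distribution by the same Fubini computation you describe, with the $Z=0$ set handled exactly as you note. Your explicit construction of the joint law as a pushforward of $\mu_0\otimes\noisemeasure$ is only a presentational difference from the paper's direct use of the regular conditional distribution $\Q_\vfield$.
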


\section{Computational Approach and Challenges}\label{sec:numericalmethods}
In this section, we describe the numerical methods used to approximate
the posterior measure $\mu$. We begin by introducing Markov Chain
Monte Carlo (MCMC) methods used to generate samples from $\mu$
(\cref{sec:mcmc_stub}). \Cref{sec:gEval} describes how we discretize and solve the advection-diffusion equation \eqref{eq:adr} to compute the potential
$\Phi$ (see \cref{def:phi}). Finally, in \cref{sec:adjoint} we define an
adjoint method for efficient computation of the \Frechet derivative
$D \Phi$, which is required for implementation of some of the more
advanced MCMC algorithms described in
\cref{sec:mcmc_stub}. 

\subsection{Sampling from $\mu$ via Markov Chain Monte Carlo (MCMC)}
\label{sec:mcmc_stub}

To sample from the posterior measure $\mu_{\data}$ (see \cref{thm:Bayes:AD}), we use two Markov Chain Monte Carlo (MCMC) methods recently developed for or extended to infinite-dimensional problems: (1) preconditioned Crank-Nicolson (pCN) \cite{cotter2013mcmc}, a generalization of the classical random walk algorithm that requires one forward evaluation (PDE solve) per iteration and represents the ``inexpensive'' end of the computational spectrum (see \cref{alg:mcmcpcn}); and (2) Hamiltonian Monte Carlo (HMC) \cite{bou2018geometric,beskos2017geometric,duane1987hybrid}, a ``computationally expensive'' method that requires multiple PDE solves and gradient computations per iteration (see \cref{alg:mcmchmc}). In \cref{sec:results_is_mala}, we additionally present some results for the independence sampler and Metropolis-adjusted Langevin Algorithm (MALA) (see, e.g., descriptions in \cite{dashti2017bayesian} and \cite{beskos2017geometric}, respectively). See also \cite[Chapter 5]{krometis2018bayesian} for a detailed description of these four methods and \cite{hoffman2014no} for an algorithm to recursively select parameters for HMC.

\begin{algorithm}
\caption{Preconditioned Crank-Nicolson (pCN) MCMC.}\label{alg:mcmcpcn}
\begin{algorithmic}[1]
\item Given free parameter $\beta \in (0,1]$ and initial sample $\mcmcsamp^{(k)}$
\item Propose $\mcmccand = \sqrt{1-\beta^2}\mcmcsamp^{(k)} + \beta \xi^{(k)}$, $\xi^{(k)} \sim N(0,\covar)$
\item Set $\mcmcsamp^{(k+1)} = \mcmccand$ with probability
  $\min\left\{1,\exp\left(\Phi\left(\mcmcsamp^{(k)}\right)
      - \Phi\left(\mcmccand\right) \right)\right\}$, otherwise $\mcmcsamp^{(k+1)} = \mcmcsamp^{(k)}$
\end{algorithmic}
\end{algorithm}

\newcommand{\hmcq}{\mathbf{q}}
\newcommand{\hmcp}{\mathbf{p}}
\newcommand{\hmcv}{\mathbf{w}}
\newcommand{\hmcH}{\mathcal{H}}
\newcommand{\hmcU}{\mathcal{U}}
\newcommand{\hmcK}{\mathcal{K}}
\begin{algorithm}
\caption{Hamiltonian MCMC (HMC).}\label{alg:mcmchmc}
\begin{algorithmic}[1]
\State Given free parameters $\tau \ge \epsilon > 0$ and initial sample $\mcmcsamp^{(k)}$. Set $L=\frac{\tau}{\epsilon}$.
\State Set $(\hmcq_0,\hmcv_0) = (\mcmcsamp^{(k)},\hmcv)$, where $\hmcv \sim \mu_0 = N(0,\covar)$
\For{$i=1$ to $L$} 
\State Integrate Hamiltonian dynamics (compute $\Phi$ and $D\Phi$): $(\hmcq_{i-1},\hmcv_{i-1}) \mapsto (\hmcq_i,\hmcv_i)$
\EndFor
\State Compute $\Delta \hmcH =  \hmcH(\hmcq_L,\hmcv_L) - \hmcH(\mcmcsamp^{(k)},\hmcv)$ 
\State Set $\mcmcsamp^{(k+1)} = \hmcq_L$ with probability $\min\left\{1,\exp \left[ -\Delta \hmcH \right]\right\}$, otherwise $\mcmcsamp^{(k+1)} = \mcmcsamp^{(k)}$ 
\end{algorithmic}
\end{algorithm}

\subsection{Evaluation of $\G$}\label{sec:gEval}
Computing the potential $\Phi(\vfield)$ (see \cref{def:phi}) as in \cref{alg:mcmcpcn} or \cref{alg:mcmchmc} requires evaluating $\G(\vfield)$,
i.e., computing (e.g., point) observations for $\pdesol(\vfield)$. This
requires numerically solving \eqref{eq:adr} using a PDE solver. We do so using a spectral method \cite{canuto2007spectrali,gottlieb1977numerical}, expanding $\vfield$ in a Fourier basis as in \eqref{eq:Hs}, \eqref{eq:reality} and $\pdesol$ similarly as $\pdesol(t,\x)=\sum_{\kbf} \pdesol_{\kbf}(t) e^{2 \pi
  i\kbf\cdot\x}$.
We apply a Galerkin projection, writing the coefficients $\pdesol_{\kbf}$ as a system of ODEs that reduces to
\begin{equation}
  \ddt \pdesolvec(t) = A \pdesolvec(t),\quad\text{where } (A)_{lm} = -i v_{\kbf'} \left( {\kbf'}^\perp \cdot \kbf_m \right) - \conductivity \norm{\kbf_l}^2\delta_{lm}, \quad \text{ where } \kbf' = \kbf_l - \kbf_m.
  \label{eq:spectralOde}
\end{equation}
This system is then integrated using the implicit midpoint (Crank-Nicolson) method to approximate $\pdesolvec(t)$. Point observations are calculated as $\pdesol(t_j,\x_j) = \sum_{\kbf} \pdesol_{\kbf}(t_j) e^{2 \pi i\kbf\cdot\x_j}$, with evaluation at time $t_j$ interpolated if $t_j$ does not fall on a timestep of the integration method. 
When $\conductivity$ is small, accurate representation of $\pdesol$ requires a large number of components due to the frequency cascade $\kbf'$ in \eqref{eq:spectralOde} (see \cite{stynes2013numerical}). This challenge motivates our concurrent work in
\cite{borggaard2019particle}, in which we introduce a particle method
for efficient evaluation of $\G(\vfield)$, allowing the computation of
large numbers of samples for low-$\conductivity$ problems.

\subsection{An Adjoint Method for Evaluating the Gradient of $\Phi$}
\label{sec:adjoint}
HMC (\cref{alg:mcmchmc}) requires evaluating the \Frechet derivative of the potential $\Phi$ (see \cref{def:phi}) with respect to changes in $\vfield$, a direct approach to which would require many PDE solves. Here we introduce an adjoint approach that requires a single PDE solve per gradient computation using (a forced version of) the same solver used to solve \eqref{eq:adr}. See also \cite{hinze2009optimization} for a detailed discussion of adjoint methods and \cite{akccelik2003variational}, in which a similar adjoint equation was derived for a different application.

\newcommand{\obssp}{H^m\left( [0,T]; H^s(\spatdom) \right)}
\newcommand{\obsdualsp}{H^{-m}\left( [0,T]; H^{-s}(\spatdom) \right)}
\newcommand{\adjsp}{H^{1-m}\left( [0,T]; H^{2-s}(\spatdom) \right)}
\newcommand{\obsspsh}{H^{m,s}}
\newcommand{\adjspsh}{H^{1-m,2-s}}
\newcommand{\ipsp}{\adjspsh\times H^{m-1,s-2}}
\newcommand{\testl}{\rho}
\newcommand{\adjsol}{\testl_0}
\newcommand{\adjsolbw}{\tilde{\testl}_0}
\newcommand{\adjsolbwl}{\tilde{\testl}_{0_l}}
\newcommand{\testt}{\phi}
\newcommand{\dth}{\psi}
\newcommand{\dvf}{\hat{\vfield}}
\begin{Thm}[Adjoint Method for Evaluating $D\Phi$]
  For a given background flow $\vfield \in H$, let $\pdesol(\vfield)\in \obssp$ be a weak solution 
  of the advection-diffusion equation \eqref{eq:adr} in the sense that $\pdesol(0,\x)=\pdesol_0$ a.e. and
  \begin{equation}
      \ipZ{\testl}{\ddt\pdesol+\vfield\cdot\nabla\pdesol-\conductivity\Delta \pdesol}{\ipsp}=0
      \label{eq:adjweak}
  \end{equation}
  for all $\testl \in \adjsp$. Suppose that $\Phi$ (see \cref{def:phi}) and $\Obs: \obssp \to \R^N$ (see \cref{def:obs}) are continuously differentiable in $\G$ and $\pdesol$, respectively. Suppose there exists a
  $\adjsol \in \adjsp$ with $\adjsol(0,\x)=0$ a.e. that solves the forced adjoint equation
  \begin{equation}
    \begin{aligned}
      \ipZ{\ddt\adjsol-\vfield\cdot\nabla\adjsol-\conductivity\Delta \adjsol}{\testt}{H^{-m,-s}\times\obsspsh}
      &= - \frac{\partial \Phi}{\partial \G}(\vfield) \cdot \Obs[\tilde{\testt}] \\
    \end{aligned}
    \label{eq:adjoint_tau}
  \end{equation}
  for all $\testt \in \obssp$ with $\testt(T,\x)=0$ a.e., where $\tilde{\testt}(t,\x) \coloneqq \testt(T-t,\x)$. Then the \Frechet 
  derivative of $\Phi$ at $\vfield$ in the direction $\dvf$ 
  is given by
  \begin{equation}
    D_{\dvf} \Phi(\vfield) 
    = \ipZ{\adjsolbw}{\dvf \cdot \nabla \pdesol}{\ipsp}
    \label{eq:adjoint_integral}
  \end{equation}
  where $\adjsolbw(t,\x) \coloneqq \adjsol(T-t,\x)$.
  \label{thm:adjoint}
\end{Thm}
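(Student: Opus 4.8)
The plan is to run the standard Lagrangian duality argument of PDE-constrained optimization, but carried out weakly in the anisotropic space-time Sobolev scales fixed in the statement and adapted to the time-reversed formulation \eqref{eq:adjoint_tau}. Since $\Phi$ depends on $\vfield$ only through the composition $\G=\Obs\circ\Sol$, I would first invoke the chain rule. Writing $\dth:=D_{\dvf}\Sol(\vfield)$ for the sensitivity of the scalar field to a perturbation $\dvf$ of the flow, and granting \Frechet differentiability of $\Sol$ (which I would establish from the bilinear structure of \eqref{eq:adr} together with the continuity already recorded in \cref{def:adr_weak}), the differentiability hypotheses on $\Phi$ and $\Obs$ give
\[ D_{\dvf}\Phi(\vfield)=\frac{\partial\Phi}{\partial\G}(\vfield)\cdot\Obs[\dth], \]
where $\Obs[\dth]$ is the derivative of $\Obs$ in the direction $\dth$ (simply $\Obs[\dth]$ itself for the linear observation maps of \cref{ex:Obs:Op}). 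The whole point is then to rewrite this right-hand side, which a priori demands solving for $\dth$ once per direction $\dvf$, as a single pairing against an adjoint field that does not depend on $\dvf$.

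Next I would derive the tangent linear equation for $\dth$. Differentiating the weak form \eqref{eq:adjweak} in the direction $\dvf$ — the only $\vfield$-dependence being the bilinear advection term, and the fixed datum $\pdesol_0$ contributing nothing — shows that
\[ \ddt\dth+\vfield\cdot\nabla\dth-\conductivity\Delta\dth=-\dvf\cdot\nabla\pdesol, \qquad \dth(0,\x)=0, \]
holds weakly, the homogeneous initial condition reflecting that $\pdesol_0$ is independent of the flow.

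The core is the duality step. I would test the tangent equation against the time-reversed adjoint $\adjsolbw$ and integrate by parts in space and time: the advection term transposes with a sign change by $\nabla\cdot\vfield=0$ and periodicity (no boundary contribution), the diffusion term is self-adjoint, and the time derivative transposes with a sign change producing endpoint terms at $t=0$ and $t=T$. These endpoint terms both vanish, since $\dth(0,\x)=0$ kills the $t=0$ term while $\adjsol(0,\x)=0$ — equivalently $\adjsolbw(T,\x)=0$ — kills the $t=T$ term. The change of variables $t\mapsto T-t$, which converts the backward adjoint problem into the forward problem \eqref{eq:adjoint_tau}, is exactly what produces the extra sign on the time derivative; testing the defining weak form of the adjoint against $\dth$ then identifies $\frac{\partial\Phi}{\partial\G}(\vfield)\cdot\Obs[\dth]$ with the space-time pairing of $\adjsolbw$ against the source $-\dvf\cdot\nabla\pdesol$. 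Tracking signs — the minus on the right of \eqref{eq:adjoint_tau} cancels the minus generated by the time transposition — delivers \eqref{eq:adjoint_integral}.

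The main obstacle is not this formal computation but making every bracket rigorous in the scales $\obssp$ and $\adjsp$, which is precisely why those spaces look as they do. I expect the delicate points to be: (a) upgrading G\^ateaux to genuine \Frechet differentiability of $\Sol$ and confirming that its derivative is characterized by the tangent equation, with $\dth$ and $\adjsolbw$ landing in mutually dual scales so each pairing in \eqref{eq:adjoint_tau} and \eqref{eq:adjoint_integral} is finite; (b) checking that the advection source $\dvf\cdot\nabla\pdesol$ and the observation functional $\frac{\partial\Phi}{\partial\G}(\vfield)\cdot\Obs[\,\cdot\,]$ are admissible right-hand sides for the tangent and adjoint problems, so that solutions of the claimed regularity with homogeneous initial data actually exist; and (c) justifying the spatial and temporal integrations by parts at this regularity via density of smooth functions and the vanishing of boundary terms. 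The index shift from $(m,s)$ to $(1-m,2-s)$ under the adjoint encodes this duality, and I would verify that bookkeeping once and then invoke it throughout.
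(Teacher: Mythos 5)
Your proposal is correct and follows essentially the same route as the paper: chain rule through $\G=\Obs\circ\Sol$, linearization of the weak form to obtain the tangent equation for $\dth$ with homogeneous initial data, and then the duality/transposition step pairing the time-reversed adjoint $\adjsolbw$ against the tangent equation, with the endpoint terms killed by $\dth(0,\x)=0$ and $\adjsolbw(T,\x)=0$. The paper's proof is in fact more formal than your outline suggests it needs to be --- it simply chains the three weak identities together without separately justifying the points (a)--(c) you flag --- so no discrepancy arises.
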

\begin{proof}
  Application of the chain rule yields
  \begin{equation}
    D_{\dvf}\Phi(\vfield) 
    = \frac{\partial \Phi}{\partial \G}(\vfield) \cdot D_{\dvf} \G(\vfield) 
    = \frac{\partial \Phi}{\partial \G}(\vfield) \cdot \Obs\left[ D_{\dvf} \pdesol(\vfield) \right]. 
    \label{eq:dphi}
  \end{equation}
  Denote $D_{\dvf}\pdesol(\vfield)$ by
  $\psi(\vfield,\dvf)$. Then by applying \eqref{eq:adjweak} to
  $\pdesol(\vfield+\epsilon\dvf)$ and $\pdesol(\vfield)$,
  subtracting, taking the $\epsilon \to 0$ limit, and using the
  definition of the \Frechet derivative, we see that
  $\dth\in\obssp$ satisfies
  \begin{equation}
    \ipZ{\testl}{\ddt\dth+\vfield\cdot\nabla\dth-\conductivity\Delta \dth+\dvf\cdot\nabla\pdesol}{\ipsp}=0
    \label{eq:dth_pde}
  \end{equation}
  with $\dth(0,\x)=0$ a.e., for all $\testl \in \adjsp$. Also, changing variables from $t$ to $T-t$ in \eqref{eq:adjoint_tau} yields the following relationship for $\adjsolbw(t)=\adjsol(T-t)$:
  \begin{equation}
    \begin{aligned}
      \ipZ{\ddt\adjsolbw+\vfield\cdot\nabla\adjsolbw+\conductivity\Delta \adjsolbw}{\tilde{\testt}}{H^{-m,-s}\times\obsspsh}
      &= \frac{\partial \Phi}{\partial \G}(\vfield) \cdot \Obs[\testt] \\
    \end{aligned}
    \label{eq:adjoint}
  \end{equation}
  with $\adjsolbw(T,\x)=\tilde{\testt}(0,\x)=0$ a.e. Then applying \eqref{eq:dphi}, \eqref{eq:adjoint}, 
  and \eqref{eq:dth_pde} in succession yields
  \begin{align*}
      D_{\dvf} \Phi(\vfield) 
    = \frac{\partial \Phi}{\partial \G}(\vfield) \cdot \Obs\left[ \psi \right] 
    &= \ipZ{\ddt\adjsolbw+\vfield\cdot\nabla\adjsolbw+\conductivity\Delta \adjsolbw}{\dth}{H^{-m,-s}\times\obsspsh}\\
    &= \ipZ{\adjsolbw}{-\ddt\dth-\vfield\cdot\nabla\dth+\conductivity\Delta \dth}{\ipsp}\\
    &= \ipZ{\adjsolbw}{\dvf\cdot\nabla\pdesol}{\ipsp}, 
    \end{align*}
  which is the desired result.
\end{proof}
\begin{Rmk}
  Note that $\left[ \frac{\partial \Phi}{\partial \G}(\vfield) \cdot \Obs \right] \in \obsdualsp$, so solving \eqref{eq:adjoint_tau} amounts to finding the weak solution in $\obsdualsp$ of
  \begin{equation}
    \ddt\adjsol-\vfield\cdot\nabla\adjsol-\conductivity\Delta \adjsol = \frac{\partial \Phi}{\partial \G}(\vfield) \cdot \tilde{\Obs}, \quad \adjsol(0,\x)=0 \text{ a.e.},
      \label{eq:adj_pde}
  \end{equation}
  where $\tilde{\Obs}[\testt(t,\x)]\coloneqq\Obs[\testt(T-t,\x)]$.
\end{Rmk}

To compute the full gradient $D\Phi$ (the derivative with respect to
an array of bases $\{\ek\}$), we compute the integration
\eqref{eq:adjoint_integral} for $\dvf=\ek$ for each
$\kbf$. The resulting algorithm is summarized in
\cref{alg:adjoint}. Note that solving \eqref{eq:dth_pde} and
substituting into \eqref{eq:dphi} would also yield the derivative of
$\Phi$. However, this approach would require a PDE solve for each
direction $\dvf$ in which we want to take the
derivative. In particular, if we want the full gradient, we have to do
many PDE solves. By contrast, \cref{alg:adjoint} requires only one
additional PDE solve per gradient calculation. Moreover, note that \eqref{eq:adj_pde} is equivalent to
\eqref{eq:adr} with zero initial condition, a reversed vector field,
and a forcing term. Thus, the same PDE solver can be used for both the
forward and adjoint solves with minimal modification. 
For the numerical experiments in \cref{sec:results}, the adjoint equation \eqref{eq:adjoint_tau} was solved using the same spectral method described in \cref{sec:gEval}, where the forcing term was similiarly expanded in the Fourier basis $e^{2\pi i \kbf\cdot\x}$ via Galerkin projection. The integration \eqref{eq:adjoint_integral} was computed directly from the spectral representation of $\pdesol$ and $\adjsol$, yielding
\begin{equation}
  D_{\ek} \Phi(\vfield) = \sum_j 2 \pi i \left( \kbf^{\perp} \cdot \kbf_j \right) \int_{0}^{T} \adjsolbwl(t) \pdesol_j(t) \quad\text{where}\quad l \ni \kbf_l = -\kbf-\kbf_j,
\end{equation}
where the time integration was computed via trapezoidal rule.

\begin{algorithm}
\caption{Adjoint Method for Computing $D\Phi$.}\label{alg:adjoint}
\begin{algorithmic}[1]
  \State Given $\vfield$ and basis $\ek$
  \State Solve \eqref{eq:adr} for $\pdesol(t,\x,\vfield)$
  \State Solve \eqref{eq:adjoint_tau} for $\adjsol(t,\x,\vfield)$
  \For{each $\kbf$}
    \State Compute $D_{\kbf}\Phi(\vfield)$ via \eqref{eq:adjoint_integral} with $\dvf=\ek$
  \EndFor
\end{algorithmic}
\end{algorithm}

\begin{Ex}[Point Observations, Gaussian Noise]
  Let the observation operator be point observations $\Obs_j[\pdesol] = \pdesol(t_j,\x_j)$. These observations are well-defined for $\pdesol \in \obssp$ with $m>\frac{1}{2}, s>1$ (see \cref{def:adr_weak}, (iii)). Let $\noise_j \sim N(0,\sigma_\noise^2)$ for $j=1,\dots,N$ so that
  $\Phi$ is given by \eqref{eq:pot_def_gaussian}.
  Then solving \eqref{eq:adjoint_tau} amounts to finding the weak solution of
  \begin{align*}
    \frac{\partial}{\partial t}\adjsol - \vfield \cdot \nabla \adjsol 
    - \conductivity \Delta \adjsol 
    = \sum_j \frac{1}{\sigma_\noise^2} 
       \left( \data_j - \pdesol(t_j,\x_j,\vfield) \right) \delta(T-t_j-t,\x-\x_j),
  \end{align*}
  where $\adjsol(0,\x)=0$ a.e.~and $\delta(t-t_0,\x-\x_0)$ is a Dirac distribution centered at $(t_0,\x_0)$.
\end{Ex}

\begin{Ex}[Integral Observations]
  Let observations be given by $\Obs_{j}[\pdesol]=\ipZ{f_j}{\pdesol}{L^2([0,T]\times\spatdom)}$ 
  for some $f_j \in H^1([0,T];H^2(\spatdom))$. 
  Let $\pdesol \in L^2([0,T];L^2(\spatdom))$ (i.e., $m=s=0$ in \cref{thm:adjoint}). Let $\noise_j \sim N(0,\sigma_\noise^2)$ for $j=1,\dots,N$ so that
  $\Phi$ is given by \eqref{eq:pot_def_gaussian}. Then solving \eqref{eq:adjoint_tau} amounts to finding $\adjsol \in H^1([0,T],H^2(\spatdom))$ with $\adjsol(0,\x)=0$ such that
  \begin{align*}
    \frac{\partial}{\partial t}\adjsol - \vfield \cdot \nabla \adjsol 
    - \conductivity \Delta \adjsol 
    = \sum_j \frac{1}{\sigma_\noise^2} 
       \left( \data_j - \Obs_j[\pdesol(\vfield)] \right) f_j(T-t,\x).
  \end{align*}
\end{Ex}

\section{Numerical Experiments: Posterior Complexity and MCMC Convergence}\label{sec:results}
In this section, we describe applications of the above methods to two
sample problems. 
We begin with an example (\cref{sec:ex1_simple})
that yields a posterior measure with a relatively simple structure. This provides a baseline for measuring convergence of the pCN and HMC samplers (see \cref{sec:mcmc_stub}), which for our purposes represent the ``inexpensive'' and ``expensive'' ends of the computational spectrum, respectively. \Cref{sec:ex2_multihump} then presents a second example
for which the posterior measure exhibits a 
complicated, multimodal structure. This example is more
challenging for MCMC methods to sample from, and thus a good
test of the advantages offered by more ``expensive'' methods like HMC. 
The appendices present analogous results for the IS and MALA samplers (\cref{sec:results_is_mala}) and additional observables of interest to the passive scalar community (\cref{sec:results_obs}).

In each example, we generate data $\data$ by running a high-resolution
simulation of the system for a given true vector field $\vtrue$ and
applying the observation operator $\Obs$. 
The PDE solver, adjoint solver, and MCMC methods were implemented in the Julia numerical computing language \cite{bezanson2017julia}. Thousands, or in some cases millions, of samples were generated using the computational resources at Virginia Tech.\footnote{\url{http://www.arc.vt.edu}}

\subsection{Example 1: Single-welled Posterior}\label{sec:ex1_simple}
In this subsection, we construct an example that yields a posterior
distribution with a simple, single-welled structure. The problem parameters for this example are enumerated
in \cref{tab:ex1_parameters}.\footnote{Note that $\vtrue$ and
  observation locations/times were chosen randomly only to generate
  the scenario and data. This random selection ensures
  a sufficiently general problem. The data and observation operator
  then of course remain fixed during MCMC sampling.} The true 
flow $\vtrue$ is shown in \cref{fig:ex1_vtrue}.

\renewcommand{\arraystretch}{1.4}
\begin{table}[htbp]
  {\footnotesize
  \caption{Problem parameters for Example 1.} \label{tab:ex1_parameters}
  \centering
  \begin{tabular}{|>{\raggedright}p{0.125\textwidth}|p{0.3\textwidth}|>{\raggedright}p{0.125\textwidth}|p{0.3\textwidth}|} \hline
    Parameter & Value &
    Parameter & Value \\
    \hline\hline
    Observation operator, $\Obs$ & Point observations at 1,024 uniformly random $(t,x,y)$ &
    Data, $\data$ & $\G(\vtrue)$ \\\hline
    Prior, $\mu_{0}$ & Kraichnan \eqref{eq:kraichnan_prior} &
    Noise, $\noisemeasure$ & $N(0,\sigma_{\noise}^2 I)$, $\sigma_\noise=2^{-6}$ \\\hline
    True flow, $\vtrue$ (\cref{fig:ex1_vtrue}) & Randomly drawn from Kraichnan prior, $\norm{\kbf}_2\le 32$ &
    Sampling space, $\vfspace_N$ & $\norm{\kbf}_2\le 8$ (197 components) \\\hline
    Diffusion, $\conductivity$ & $0.282$, for water in air \cite{cussler2009diffusion} &
    $\pdesol_0$ & $\half-\frac{1}{4}\cos(2 \pi x)-\frac{1}{4}\cos(2 \pi y)$ \\\hline
  \end{tabular}
  }
\end{table}

\begin{figure}[!htbp]
\centering
\begin{minipage}[b]{0.49\textwidth}
	\includegraphics[width=\textwidth]{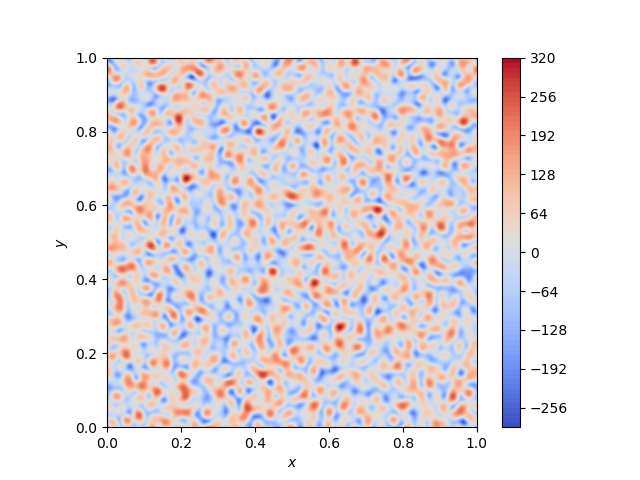}
\end{minipage}
\hfill
\begin{minipage}[b]{0.49\textwidth}
	\includegraphics[width=\textwidth]{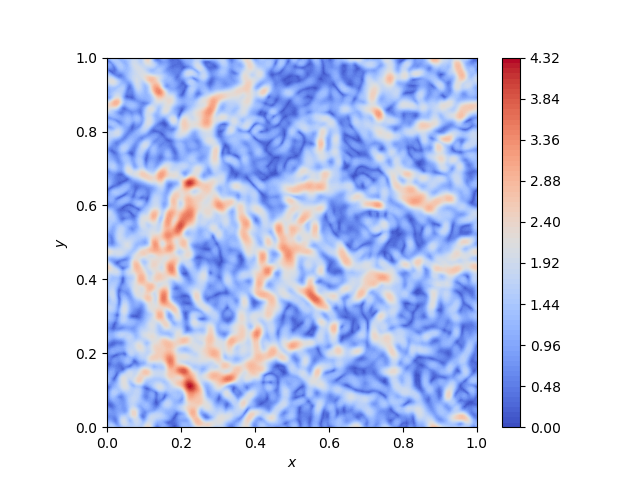}
\end{minipage}
\caption{\label{fig:ex1_vtrue}$\vtrue$ for Example 1. Left: Vorticity $\nabla \times \vtrue$, Right: $\norm{\vtrue}$.}
\end{figure}

For the prior measure, we leverage the Kraichnan model \cite{kraichnan1967inertial,kraichnan1968small} of turbulent
advection via a Gaussian random velocity field with energy spectrum 
(see \cite[Equation 28]{chen1998simulations})
\begin{equation}
  E(k) = E_0 \sum_{i=0}^N \left( \frac{k}{k_i} \right)^4 
             \exp\left[ -\frac{3}{2} \left( \frac{k}{k_i} \right)^2 \right] k_i^{-\xi}
  \label{eq:kraichnanEnergy}
\end{equation}
where $k_i = \sqrt{2}^i$ is the characteristic wave number of the
$i$th subfield, $N$ is the number of subfields, and $E_0$ controls the
overall energy. The resulting spectrum exhibits
$E(k) \propto k^{-\xi}$ for $1 < k < k_N = 2^{N/2}$ and exponential
decay for $k > k_N$. 
We then choose prior
\begin{equation}
  \mu_0 = N(0,\tilde{E}), \quad \tilde{E}_{lm}=\frac{1}{2\pi \norm{\kbf_l}_2} E(\norm{\kbf_l}_2)\delta_{lm}
  \label{eq:kraichnan_prior}
\end{equation}
where $E$ is as defined in
\eqref{eq:kraichnanEnergy}, with $\xi=\frac{3}{2}$ motivated by
\cite{chen1998simulations,kraichnan1991stochastic}.
Then for $\vfield = \sum_\kbf \vfieldnobf_k \sim \mu_0$, the expected
energy associated with wave numbers of norm $k$ (integrating across the shell $S_k=\left\{ \mathbf{k} : \norm{\kbf}_2 = k \right\}$) is $E(k)$.

Note: The Kraichnan model of mixing typically involves a velocity
field with energy spectrum \eqref{eq:kraichnanEnergy} but that is
white ($\delta$-correlated) in time \cite{shraiman2000scalar}. Here
$\vfield$ is a background flow, i.e.~constant in time; we simply use
the Kraichnan model as motivation for the energy decay modeled in the
prior.

\subsubsection{Posterior Structure}

As described above, the output of a Bayesian inference is the
posterior $\mu_\data$, a probability measure on the space of
divergence-free vector fields $\vfspace$ or, in practice, on a
finite-dimensional approximation $\vfspace_N$ given by the truncated
expansion of the basis described in \cref{sec:modelDivFree}. To
approximate the exact posterior, we assembled a list of 10 million
samples by running a series of 40 pCN MCMC chains of 250,000 samples
each, with every chain beginning with an initial
sample randomly chosen from the prior.\footnote{pCN was chosen here because it provided a computationally-inexpensive approximation to what proved to be a posterior with simple structure.} \cref{fig:ex1_posterior} shows the structure of the computed posterior. The left-hand plot shows mean, variance, skew, and
excess kurtosis (kurtosis minus $3$) of the posterior by Fourier
component of $\vfield$, where $\vfield$,
incorporating the discretization \eqref{eq:Hs} and reality condition
\eqref{eq:reality}, is constructed from the components as:
\begin{equation}
  \begin{aligned}
    \vfield(\x) &= \left[ \vfieldnobf_0, \vfieldnobf_1 \right] 
    + \vfieldnobf_2 \left[ 0, \cos(2 \pi y) \right]
    + \vfieldnobf_3 \left[ 0, - \sin(2 \pi y) \right] 
    + \vfieldnobf_4 \left[ \cos(2 \pi x), 0 \right]
    + \vfieldnobf_5 \left[ -\sin(2 \pi x), 0 \right] \\
    &\quad + \vfieldnobf_6 \left[ 0, \cos(4 \pi y) \right]
    + \vfieldnobf_7 \left[ 0, - \sin(4 \pi y) \right]
    + \vfieldnobf_8 \left[ \cos(2 \pi x), \cos(2 \pi y) \right] \\
    &\quad + \vfieldnobf_9 \left[ -\sin(2 \pi x), - \sin(2 \pi y) \right] + \dots
  \end{aligned}
  \label{eq:vcomp}
\end{equation}
Because of the influence of the
prior measure, the mean and covariance of the posterior for
higher-order components tend to zero. Skew and excess kurtosis are
near zero (up to computational resolution) for all components,
indicating that the marginal distribution for each component is
approximately Gaussian. The right-hand plot in \cref{fig:ex1_posterior} presents one- and two-dimensional
histograms of the first eight components of $\vfield$. 
Note that the histograms (and other plots omitted for brevity) all show a contiguous mass of probability, indicating that one ``class'' of vector field matches both the prior and the data.

\begin{figure}[!htbp]
\centering
\begin{minipage}[b]{0.49\textwidth}
  \includegraphics[width=\textwidth]{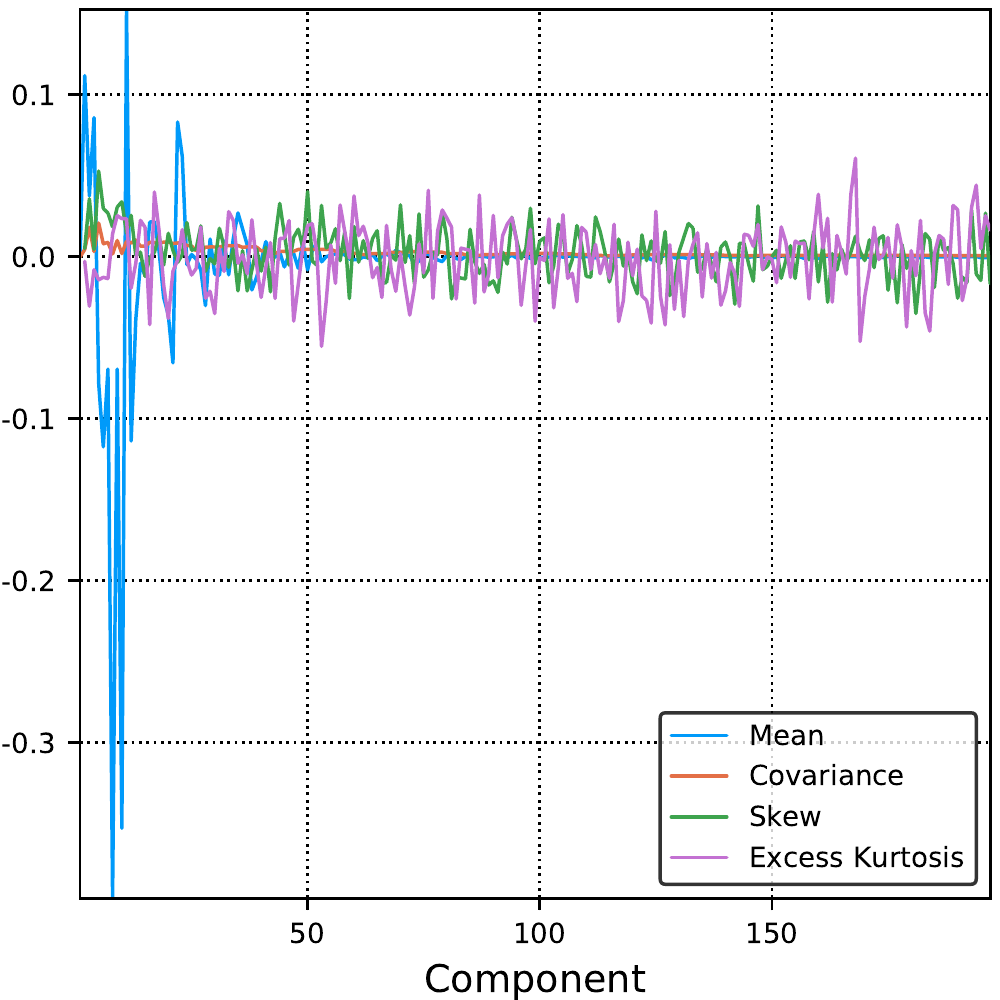}
\end{minipage}
\hfill
\begin{minipage}[b]{0.49\textwidth}
  \includegraphics[width=\textwidth]{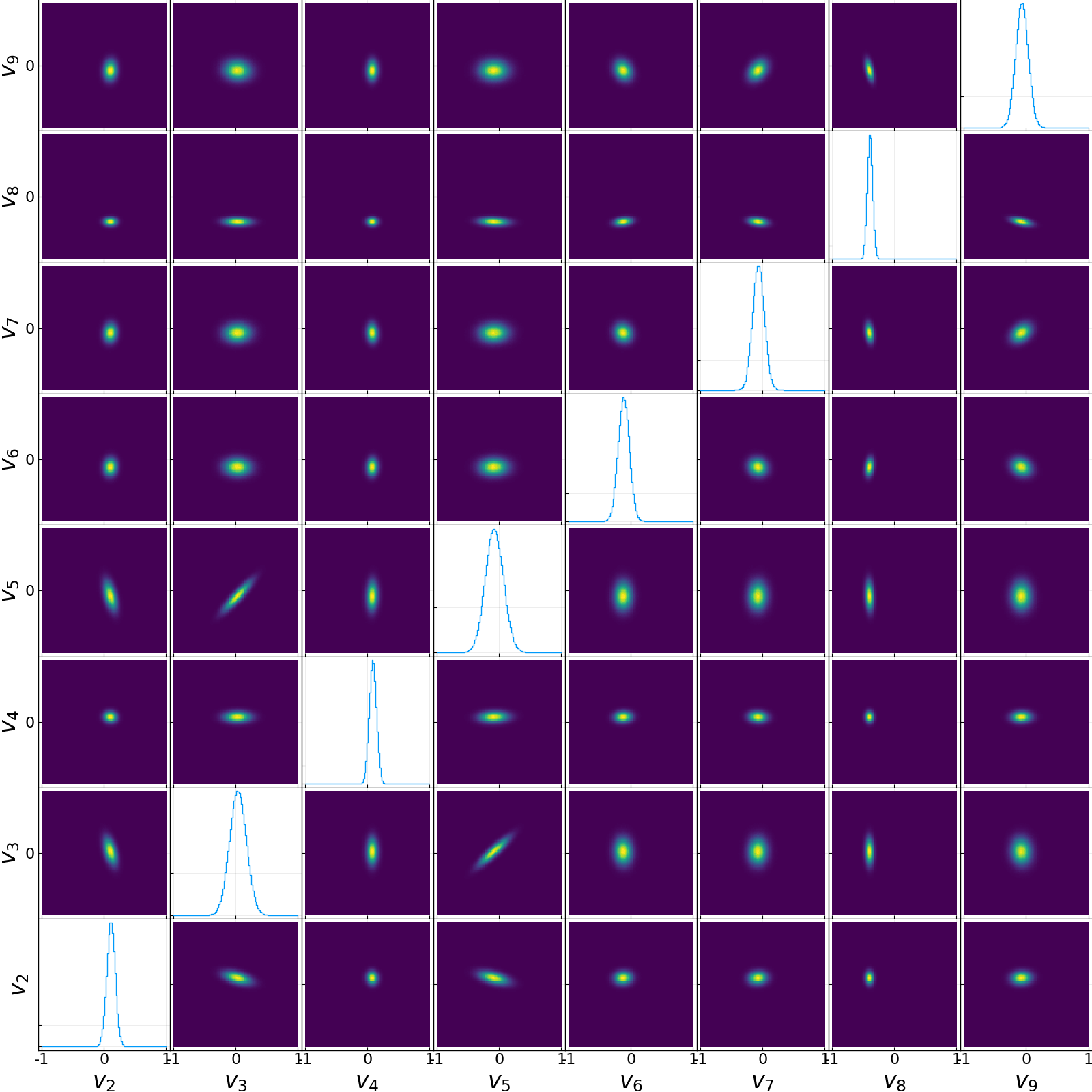}
\end{minipage}
  \caption{\label{fig:ex1_posterior} Structure of Posterior. Left: Mean, covariance, skew, and excess kurtosis of posterior measure, by component of $\vfield$. Right: Posterior ($\mu_\data$) 1D (diagonal) and 2D (off-diagonal) marginal distributions for the first eight components of $\vfield$ (out of 197).}
\end{figure}

\subsubsection{MCMC Sampling}\label{sec:ex1_sampling}

To test the behavior of ``inexpensive'' and ``expensive'' MCMC methods, both pCN and HMC (see \cref{sec:mcmc_stub}) were applied to Example 1. The pCN parameter $\beta=0.15$ was chosen to match the optimal acceptance rate of $23\%$ from \cite{roberts2001optimal}. For HMC, $\epsilon=0.125$ and $\tau=1$ were chosen because these values showed a good balance between the desire for high acceptance rate, large jumps between samples, and low computational cost in numerical experiments. The resulting acceptance rates were $23.9\%$ for pCN and $81.0\%$ for HMC. \cref{fig:ex1_misfit} shows the trace and autocorrelation of the potential $\Phi$ (see \cref{def:phi}). When pCN is applied to this example, we see ``random walk'' behavior -- the samples move about the posterior, but are correlated with each other. For HMC, the random walk effect is reduced and samples exhibit independence from each other after orders of magniture fewer iterations than for pCN. The HMC chain explores the posterior more quickly as a result. This is explored in the next section.
\begin{figure}[!htbp]
\centering
\begin{minipage}[b]{0.49\textwidth}
	\includegraphics[width=\textwidth]{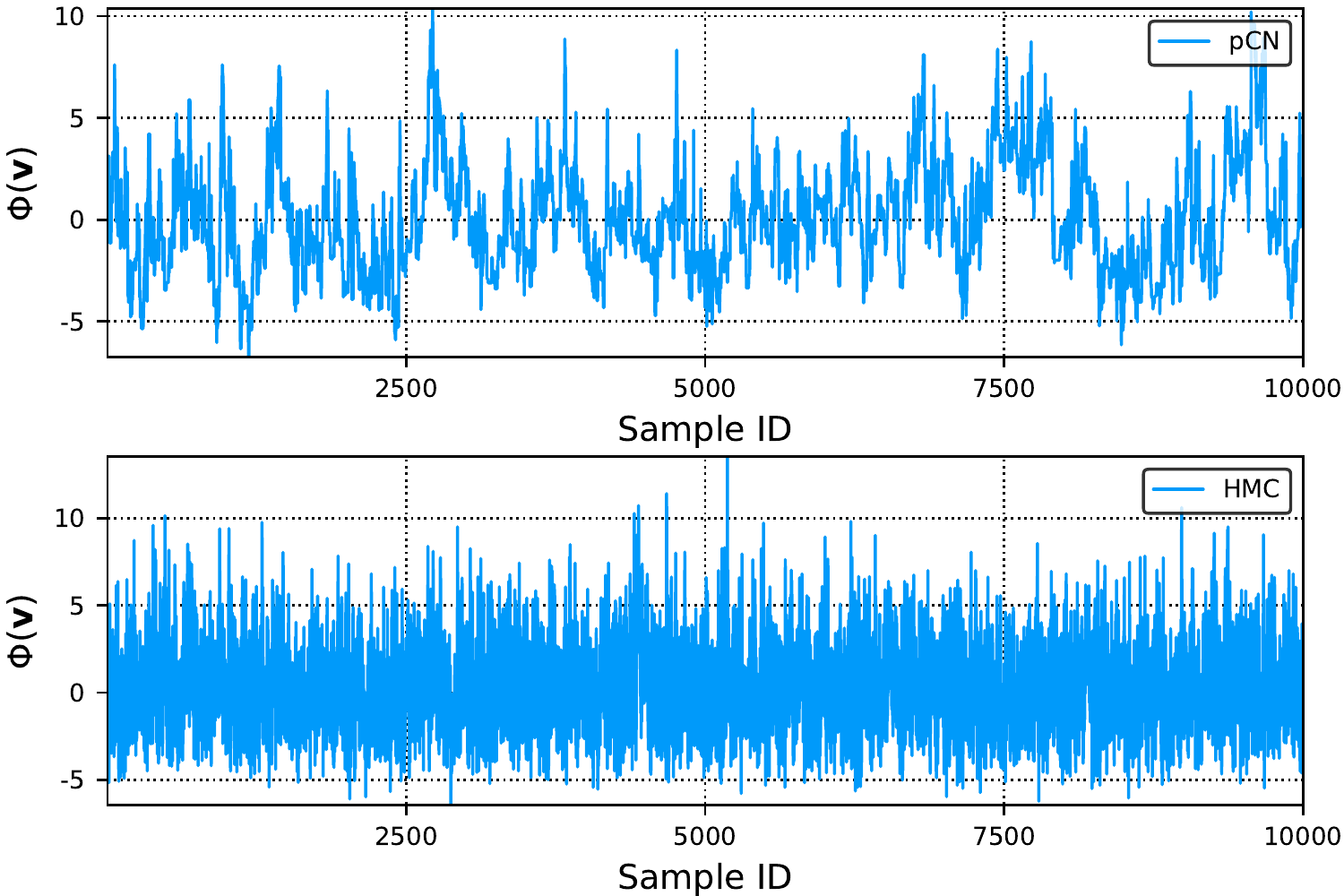}
\end{minipage}
\hfill
\begin{minipage}[b]{0.49\textwidth}
	\includegraphics[width=\textwidth]{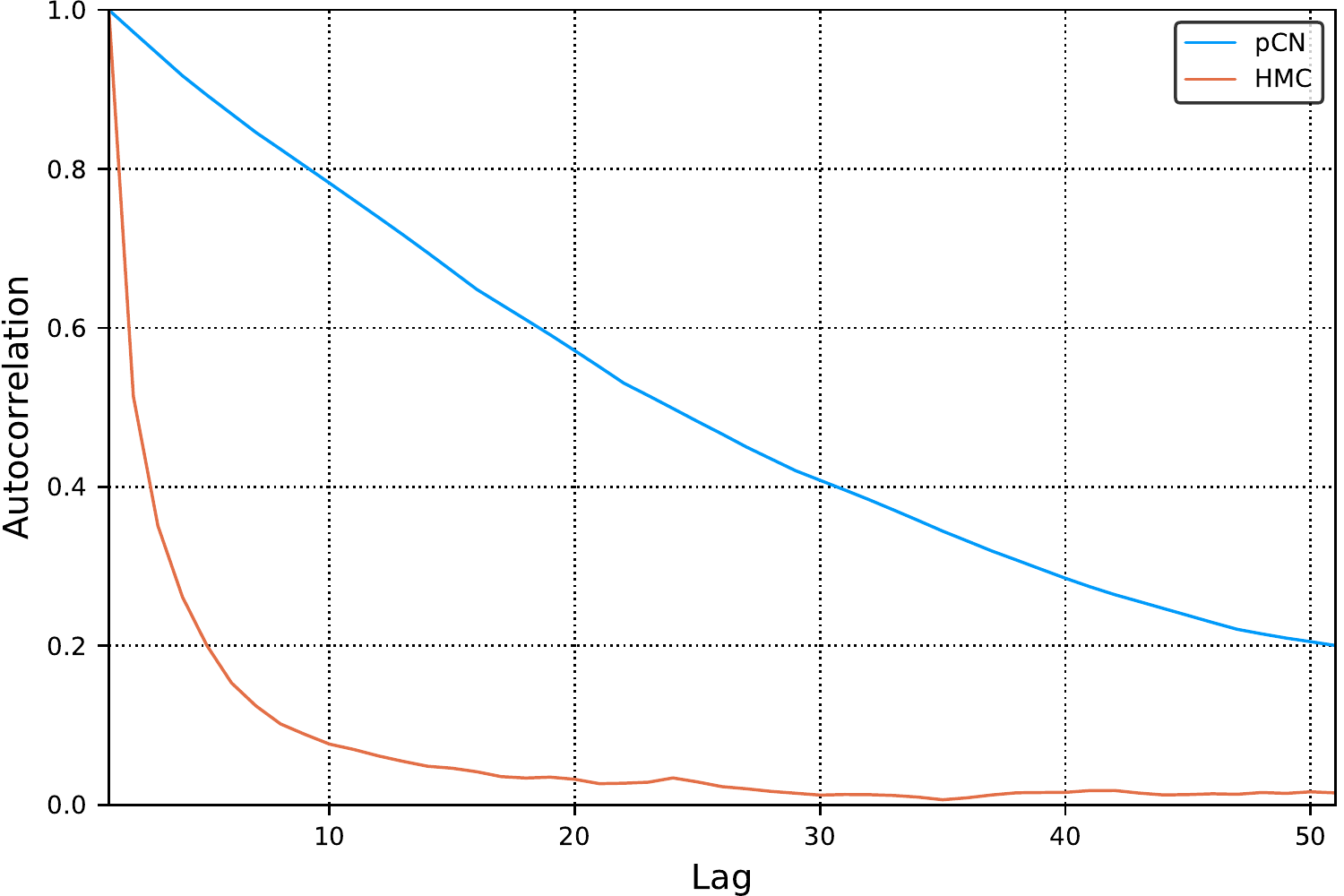}
\end{minipage}
\caption{Trace (left) and autocorrelation (right) of the potential $\Phi$.}
\label{fig:ex1_misfit}
\end{figure}

\subsubsection{Convergence of Measures}

The difference between the ``true'' (\cref{fig:ex1_posterior}) and computed marginal distributions can be evaluated via the total variation distance.\footnote{Note that the real desire would be to measure convergence in the full 197-dimensional sample space. In this paper, we will use the total variation norm to measure convergence between 1D and 2D distributions, a necessary and easy to picture -- but not sufficient -- requirement for convergence in the full-dimensional space.} Convergence of the MCMC chains to the true marginal distributions are shown in \cref{fig:ex1_totVarEvolve}. The figure shows that HMC achieves a close approximation to the posterior marginal distributions within a few hundred iterations, while similar convergence takes about an order of magnitude longer for pCN.

\begin{figure}[!htbp]
  \centering
\begin{minipage}[b]{0.49\textwidth}
	\includegraphics[width=\textwidth]{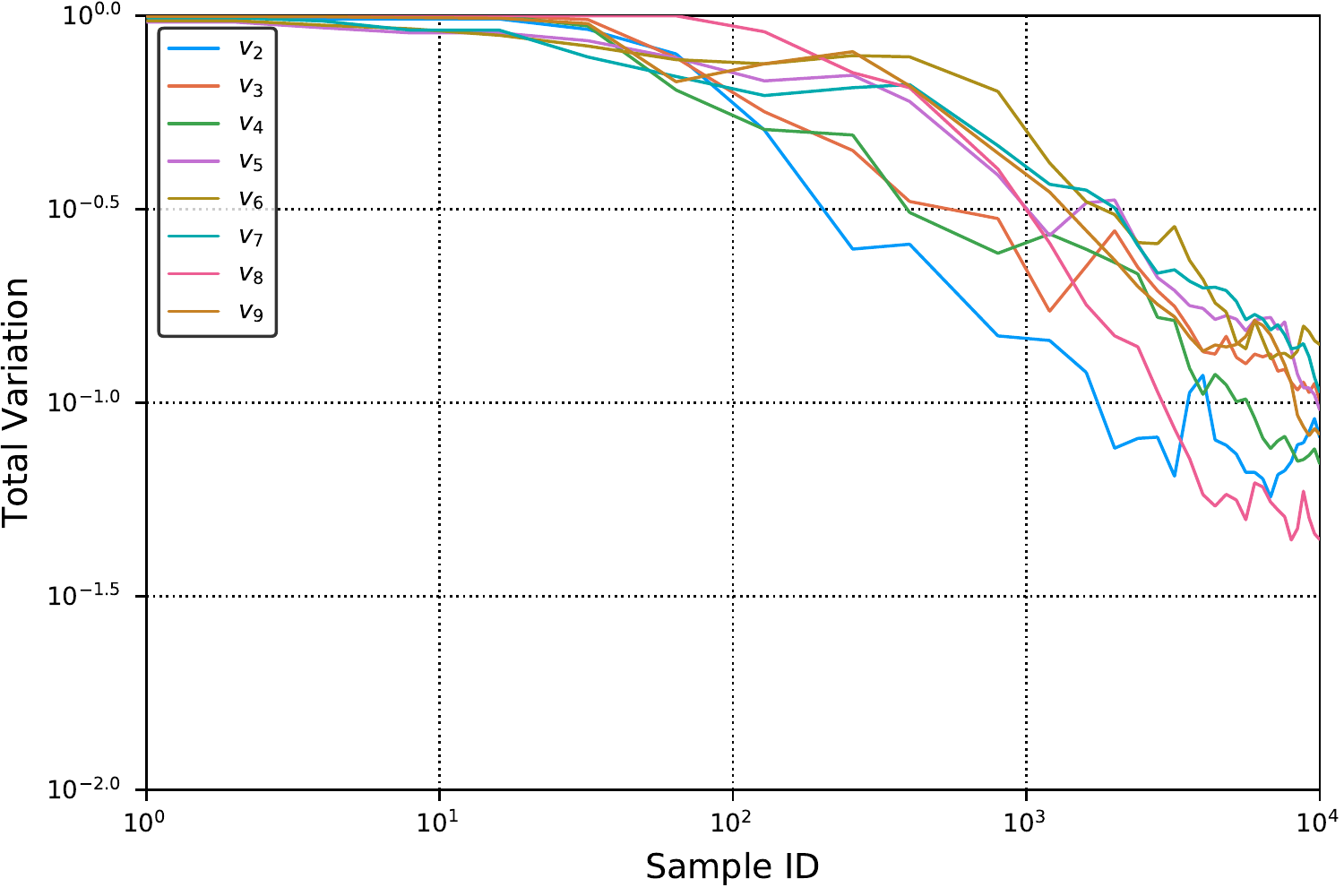}
\end{minipage}
\hfill
\begin{minipage}[b]{0.49\textwidth}
	\includegraphics[width=\textwidth]{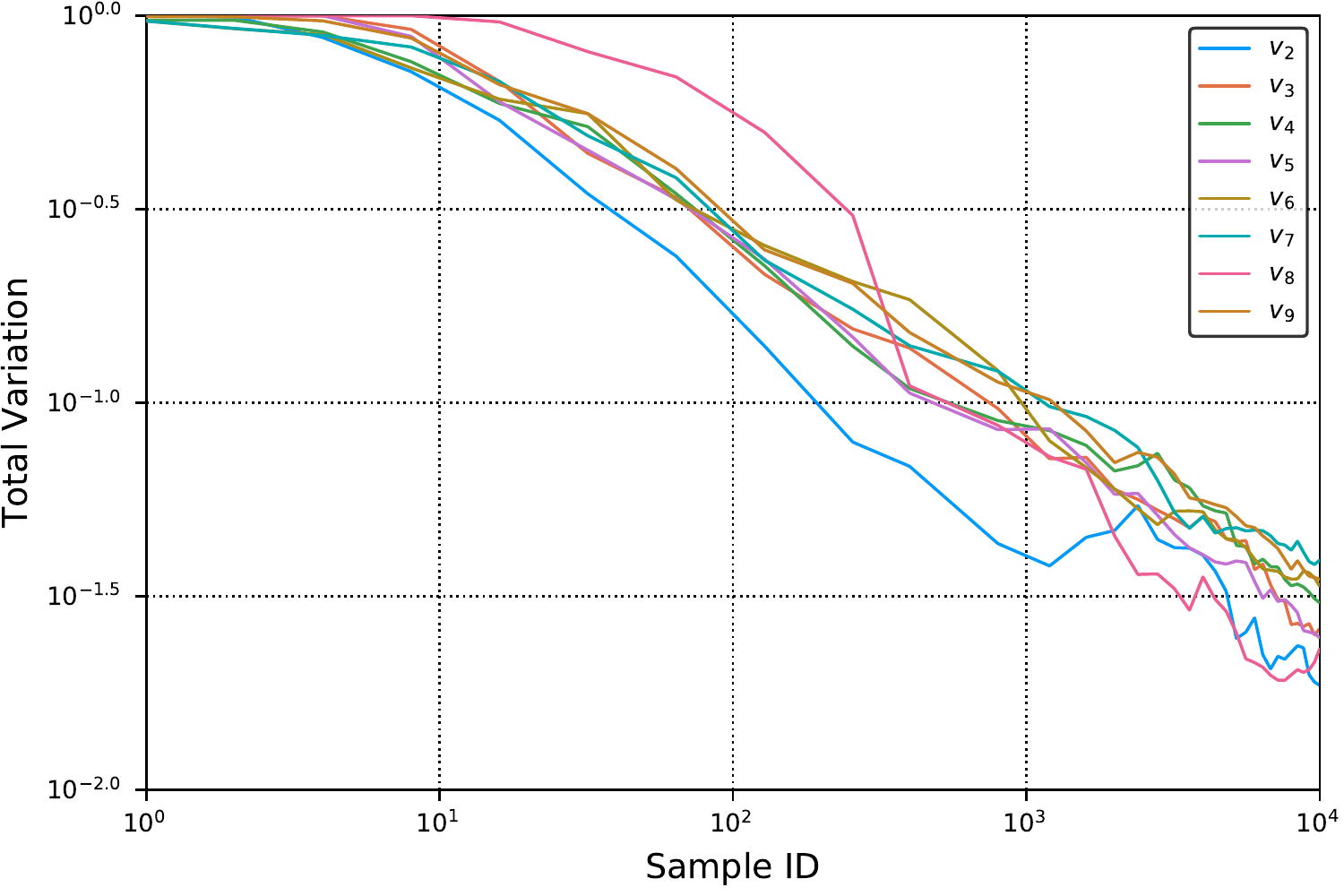}
\end{minipage}
\caption{\label{fig:ex1_totVarEvolve}Total variation norm between computed and ``true'' marginal probability density function for $\vfieldnobf_2,\dots,\vfieldnobf_9$ for 10,000 samples. Left: pCN, Right: HMC.}
\end{figure}

\subsubsection{Equal Runtime Comparison} \label{sec:ex1_runtime}

Recall from \cref{sec:ex1_sampling} that the parameters used for HMC
were $\epsilon=0.125$ and $\tau=1.0$, meaning that
$\frac{\tau}{\epsilon}=8$ PDE and adjoint solves (see
\cref{alg:mcmchmc}) were required per HMC sample. Because of these
solves and the additional costs required for the gradient computation
(see \cref{alg:adjoint}) and time integration, each HMC sample took the time of approximately
$39$ pCN samples to compute. Thus, we can
reweight pCN samples by $39$ to get a
comparison of the sampling accuracy per computational unit. 
\cref{fig:ex1_totVarEvolve_runTime} shows the convergence of
total variation norm for chains of runtime equal to
10,000 samples of HMC; the results can be compared with
\cref{fig:ex1_totVarEvolve}. We see that chains of equal runtime are largely
equivalent between the two methods when applied to Example 1; the faster convergence of HMC is essentially balanced
by the larger amount of computation required to generate
the samples. 

\begin{figure}[!htbp]
  \centering
\begin{minipage}[b]{0.49\textwidth}
	\includegraphics[width=\textwidth]{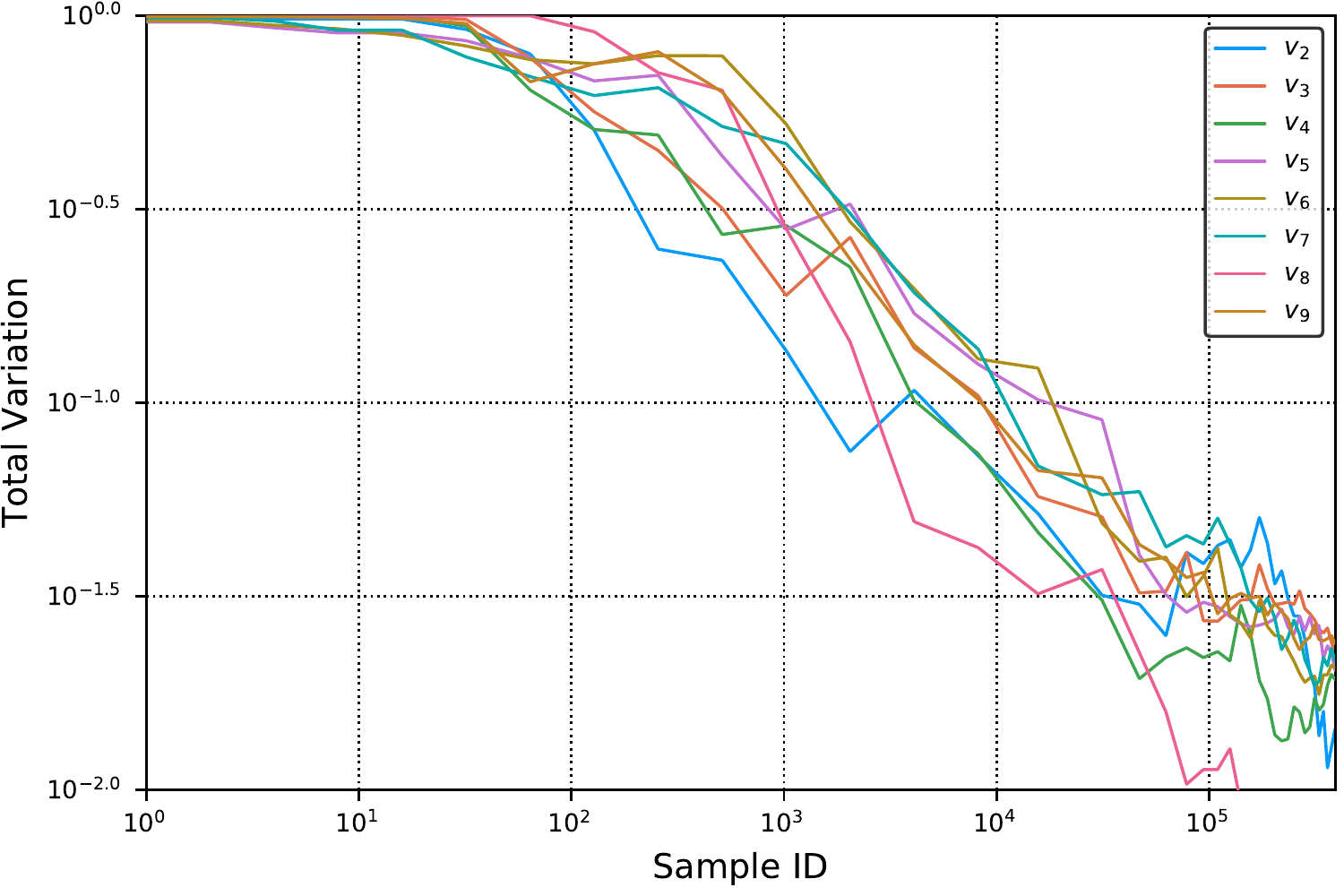}
\end{minipage}
\hfill
\begin{minipage}[b]{0.49\textwidth}
	\includegraphics[width=\textwidth]{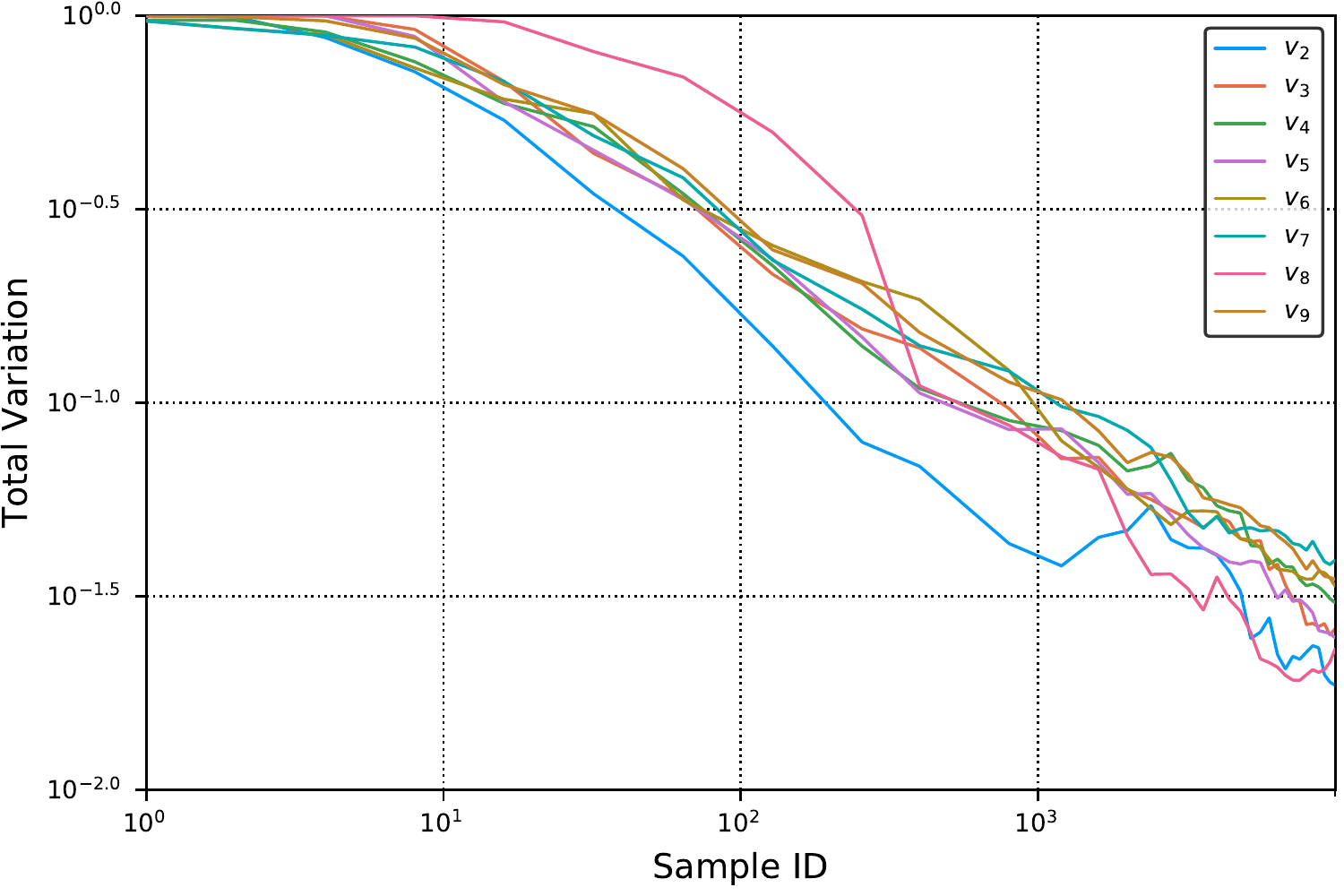}
\end{minipage}

\caption{\label{fig:ex1_totVarEvolve_runTime}Total variation norm between computed and ``true'' marginal probability density function for $\vfieldnobf_2,\dots,\vfieldnobf_9$ for runtime equivalent to 10,000 HMC samples. Left: pCN, Right: HMC.}
\end{figure}

\subsection{Example 2: Multimodal Posterior}\label{sec:ex2_multihump}
In this section we present an example where the prior and data
interact to produce a posterior with multiple regions of mass;
posteriors of this kind are difficult for MCMC methods to resolve
because chains have trouble jumping between the wells.  We take the
initial condition $\pdesol_0(\x) = \frac{1}{2} - \frac{1}{4}\cos 2\pi x - \frac{1}{4}\cos 2\pi y$ and true background flow $\vtrue=\left[ 8\cos 2\pi y , 8\cos 2\pi x \right]$. 
Symmetry guarantees that for $\x_1=[0,0]$ and $\x_2=[\half,\half]$ we have $\pdesol(\vtrue,t,\x_i)=\pdesol(-\vtrue,t,\x_i), i=1,2$. 
(In fact, there are more points for which this is true; however, two points suffice for the purposes of this example.) We therefore let the data $\data$ be point measurements $\pdesol(t,\x)$ from
$t=0.001$ to $0.050$ in intervals of $0.001$ at each of $\x_1$ and
$\x_2$. Then we have
$\Phi(\vtrue)=\Phi(-\vtrue)$, i.e.~both $\vtrue$ and $-\vtrue$ match
the data equally well. Finally, we use the mean-zero Kraichnan prior 
\eqref{eq:kraichnan_prior}, which assigns the same probability to both $\vtrue$ and $-\vtrue$.
The problem parameters for this example are listed in \cref{tab:ex2_parameters}.

Since both $\vtrue$ and $-\vtrue$ are given the same probability by both the prior and the
data, they will be equally likely according to the posterior. We show in the next section that the symmetry in the
problem setup results in multiple distinct probability masses in the
posterior.

\renewcommand{\arraystretch}{1.4}
\begin{table}[htbp]
  {\footnotesize
  \caption{Problem parameters for Example 2.} \label{tab:ex2_parameters}
  \centering
  \begin{tabular}{|>{\raggedright}p{0.125\textwidth}|p{0.3\textwidth}|>{\raggedright}p{0.125\textwidth}|p{0.3\textwidth}|} \hline
    Parameter & Value &
    Parameter & Value \\
    \hline\hline
    Observation operator, $\Obs$ & Point observations at $\x_1=[0,0]$ and $\x_2=[\half,\half]$ &
    Data, $\data$ & $\G(\vtrue)$ \\\hline
    Prior, $\mu_{0}$ & Kraichnan \eqref{eq:kraichnan_prior} &
    Noise, $\noisemeasure$ & $N(0,\sigma_{\noise}^2 I)$, $\sigma_{\noise}=2^{-3}$ \\\hline
    True flow, $\vtrue$ & $\left[8\cos 2\pi y, 8\cos 2\pi x \right]$ &
    Sampling space, $\vfspace_N$ & $\norm{\kbf}_2\le 8$ (197 components) \\\hline
    Diffusion, $\conductivity$ & $3 \times 10^{-5}$ \cite[Table I]{chen1998simulations} &
    $\pdesol_0$ & $\half-\frac{1}{4}\cos(2 \pi x)-\frac{1}{4}\cos(2 \pi y)$ \\\hline
  \end{tabular}
  }
\end{table}

\subsubsection{Posterior Structure} \label{sec:ex2_posterior} As in
Example 1, we approximate the exact posterior via a large number of
samples; in this case we use 500,000 samples generated from 100 HMC
chains of 5,000 samples apiece, each beginning with an initial sample randomly chosen from the prior measure. We chose HMC chains because they provided better
convergence to the posterior than the other methods, as we describe
below. \cref{fig:ex2_posterior} shows the resulting posterior structure. The left plot shows the computed mean, variance, skew,
and excess kurtosis of the posterior, by Fourier component of
$\vfield$. We note that, due to the influence of the prior
measure, the mean and covariance of the posterior tend to zero for
higher-order components. Also, the deviations of excess kurtosis from
zero indicate the presence of highly non-Gaussian marginal
distributions for some components. The plot on the right presents one- and two-dimensional histograms for the first few
components of $\vfield$ (see the expansion in \eqref{eq:vcomp} for interpretation of
the components). Note that the symmetry of the problem results in
multiple large modes both $\vfieldnobf_2$ and $\vfieldnobf_4$, as well
as in several smaller bumps in the distributions of the other
components.

\begin{figure}[!htbp]
\centering
\begin{minipage}[b]{0.49\textwidth}
  \includegraphics[width=\textwidth]{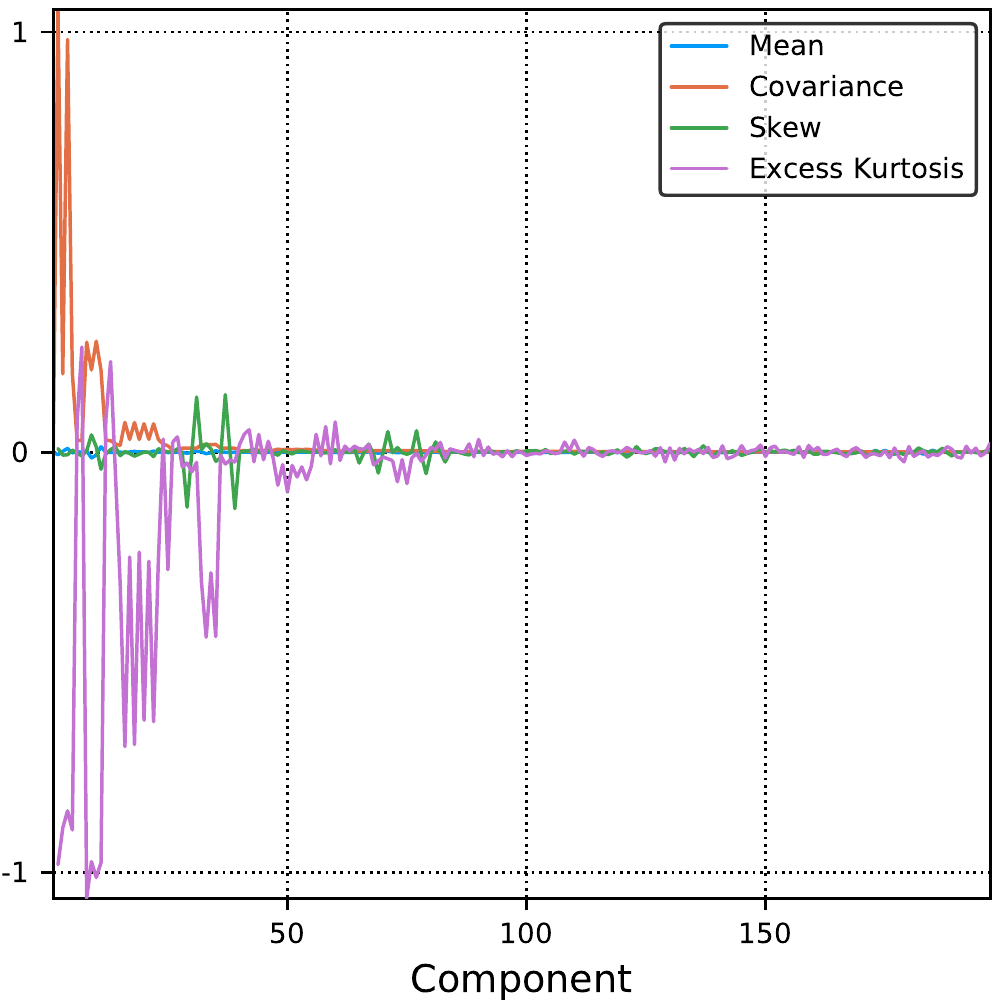}
\end{minipage}
\hfill
\begin{minipage}[b]{0.49\textwidth}
  \includegraphics[width=\textwidth]{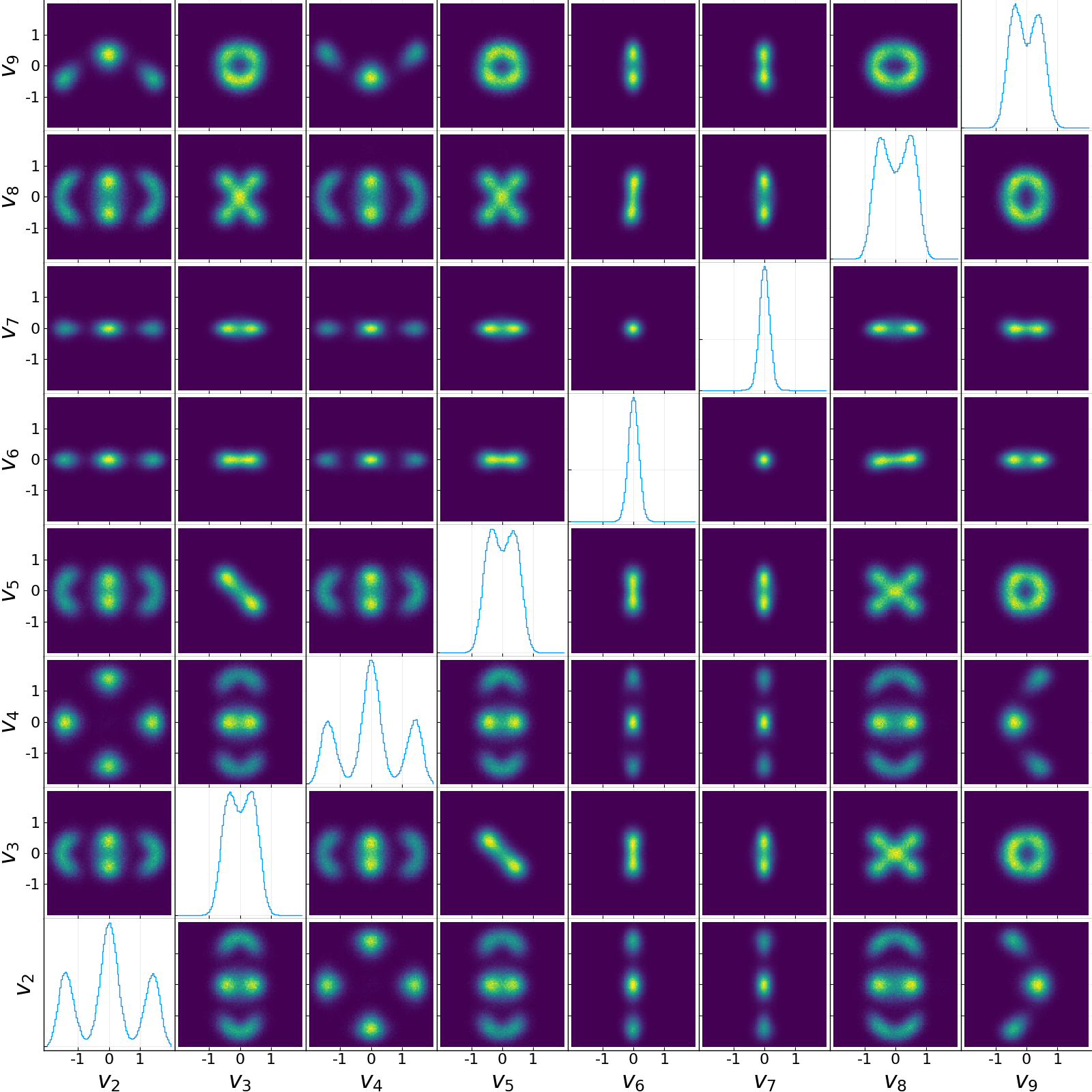}
\end{minipage}
  \caption{\label{fig:ex2_posterior} Structure of Posterior. Left: Mean, covariance, skew, and excess kurtosis of posterior measure, by component of $\vfield$. Right: Posterior ($\mu_\data$) 1D (diagonal) and 2D (off-diagonal) marginal distributions for the first eight components of $\vfield$ (out of 197).}
\end{figure}

Moreover, in constrast to Example 1, the two-dimensional histograms -- the approximate posterior joint probability density of pairs of vector field components -- show that the vector field components are highly correlated with each other (see, e.g., the ``X'' shape  between $\vfieldnobf_3$ and $\vfieldnobf_8$). 
It is worth noting that the posterior contains these correlation structures even though the prior assumes independence of the components.

Finally, it is worth noting that not all observables of the posterior
exhibit complicated structures. \cref{fig:ex2_vort_hist2d} shows
the computed posterior one- and two-dimensional histograms of
background flow vorticity at nine observation locations. The
one-dimensional histograms are simple -- i.e., nearly Gaussian -- at
each point. However, the two-dimensional histograms (except at the
center point, $\x_5$), exhibit multiple modes of different shapes.
\begin{figure}[!htbp]
\centering
\begin{minipage}[b]{0.49\textwidth}
  \includegraphics[width=\textwidth]{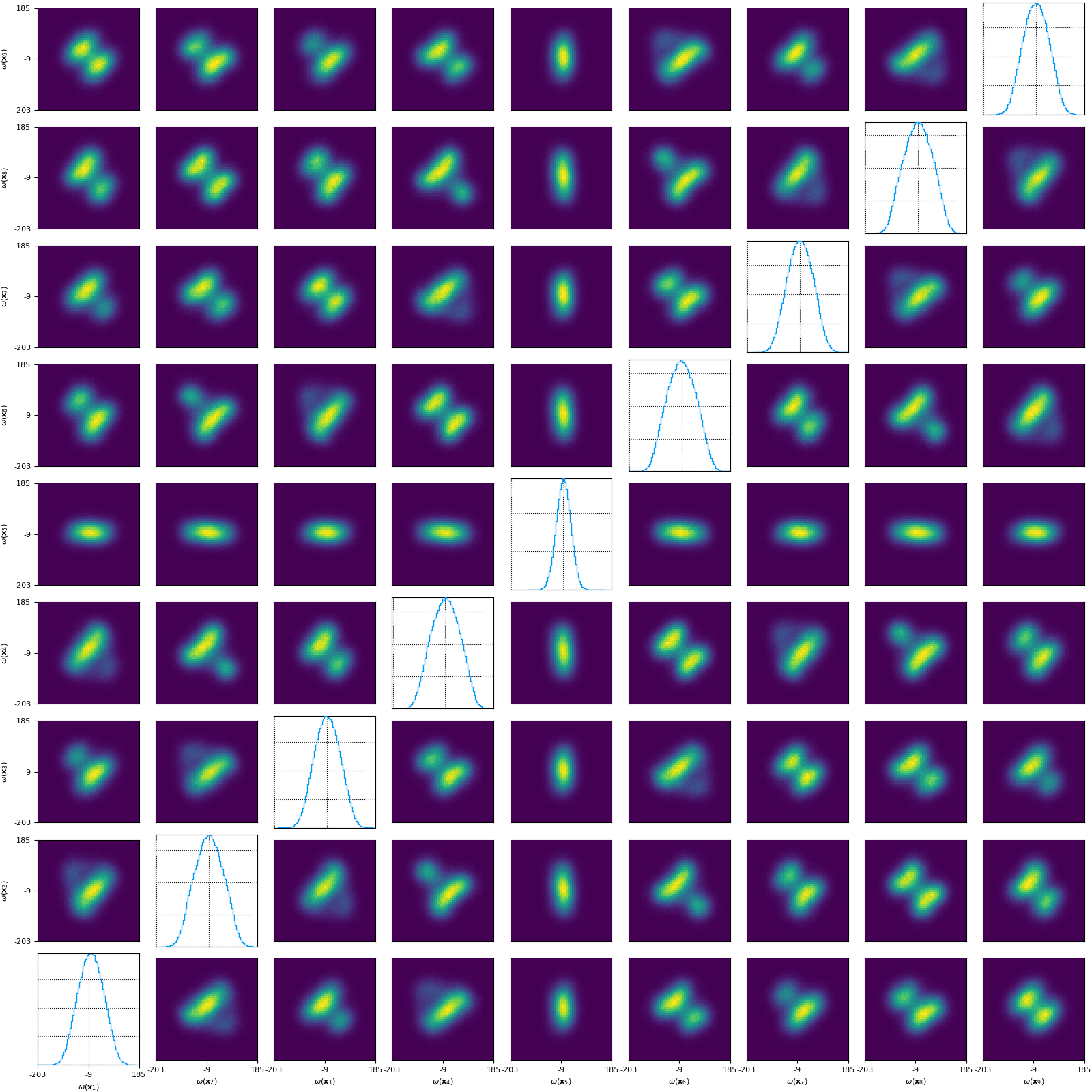}
\end{minipage}
\hfill
\begin{minipage}[b]{0.49\textwidth}
  \includegraphics[width=\textwidth]{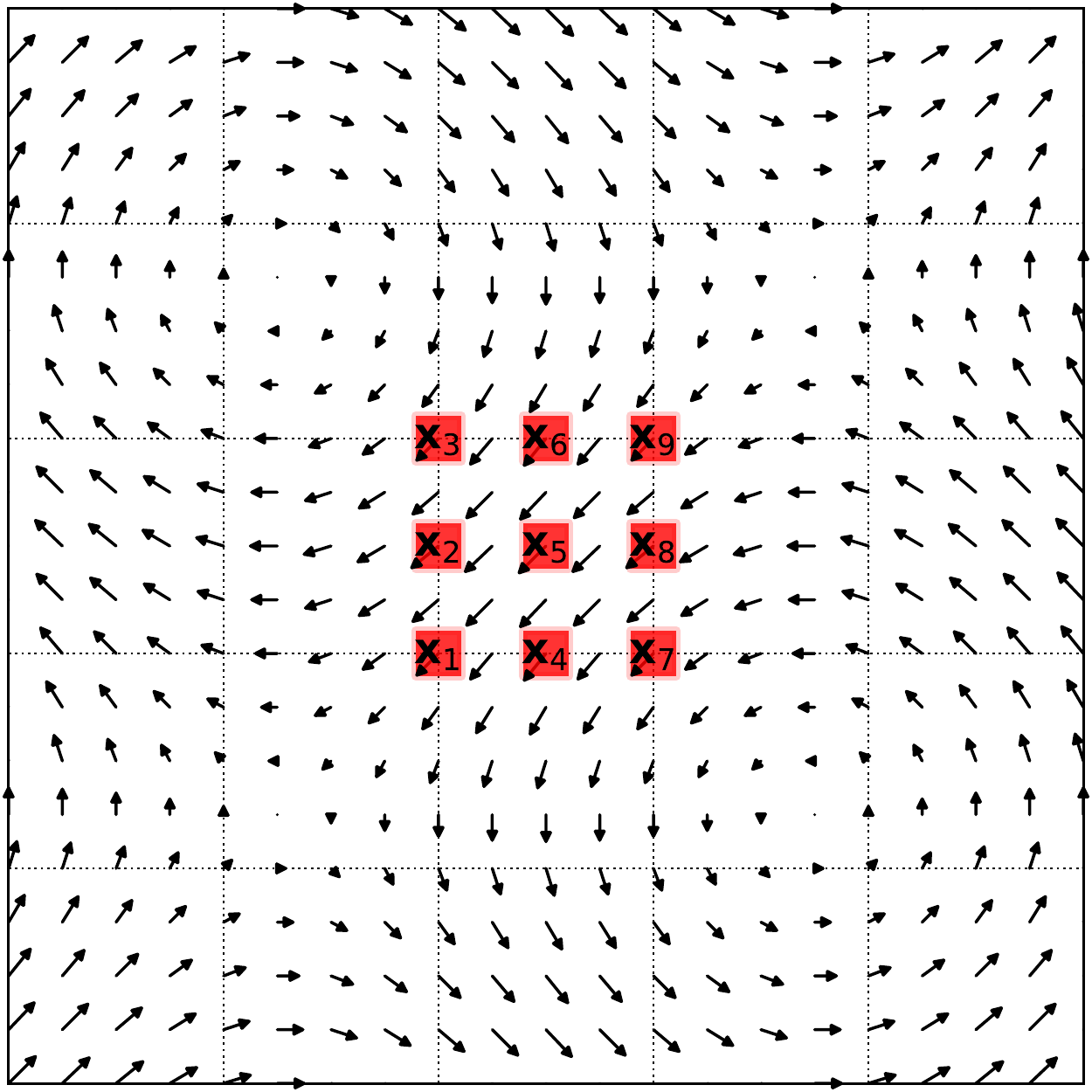}
\end{minipage}
\caption{\label{fig:ex2_vort_hist2d}Posterior one- and two-dimensional histograms of vorticity (left) at nine observation points (shown against $\vtrue$, right).}
\end{figure}

\subsubsection{MCMC Sampling} \label{sec:ex2_sampling} 
The multimodal structure of the posterior is typical of distributions that are difficult for MCMC methods to resolve efficiently, as the chains have difficulty moving across the regions of low probability between the regions of mass. We now use this structure to test the viability of pCN and HMC (see \cref{sec:mcmc_stub}) in resolving complicated posteriors. The tests use parameter values of $\beta = 0.2$ for pCN (again corresponding to the optimal acceptance rate of $23\%$ from \cite{roberts2001optimal}) and $\epsilon=0.125$ and $\tau=4$ for HMC, which in numerical experiments showed good convergence behavior.

\cref{fig:ex2_misfit} shows the trace and autocorrelation of the potential $\Phi$ (see \cref{def:phi}) for MCMC sampling of Example 2. As in Example 1, we see ``random walk'' behavior for pCN, whereas for HMC many fewer iterations are required to achieve statistical independence between samples. Unlike Example 1, however, the HMC chains for Example 2 exhibit negative autocorrelation between consecutive samples. The author plans to investigate this phenomenon, which may be related to the multimodal structure of the problem, in later work.
\begin{figure}[!htbp]
\centering
\begin{minipage}[b]{0.49\textwidth}
	\includegraphics[width=\textwidth]{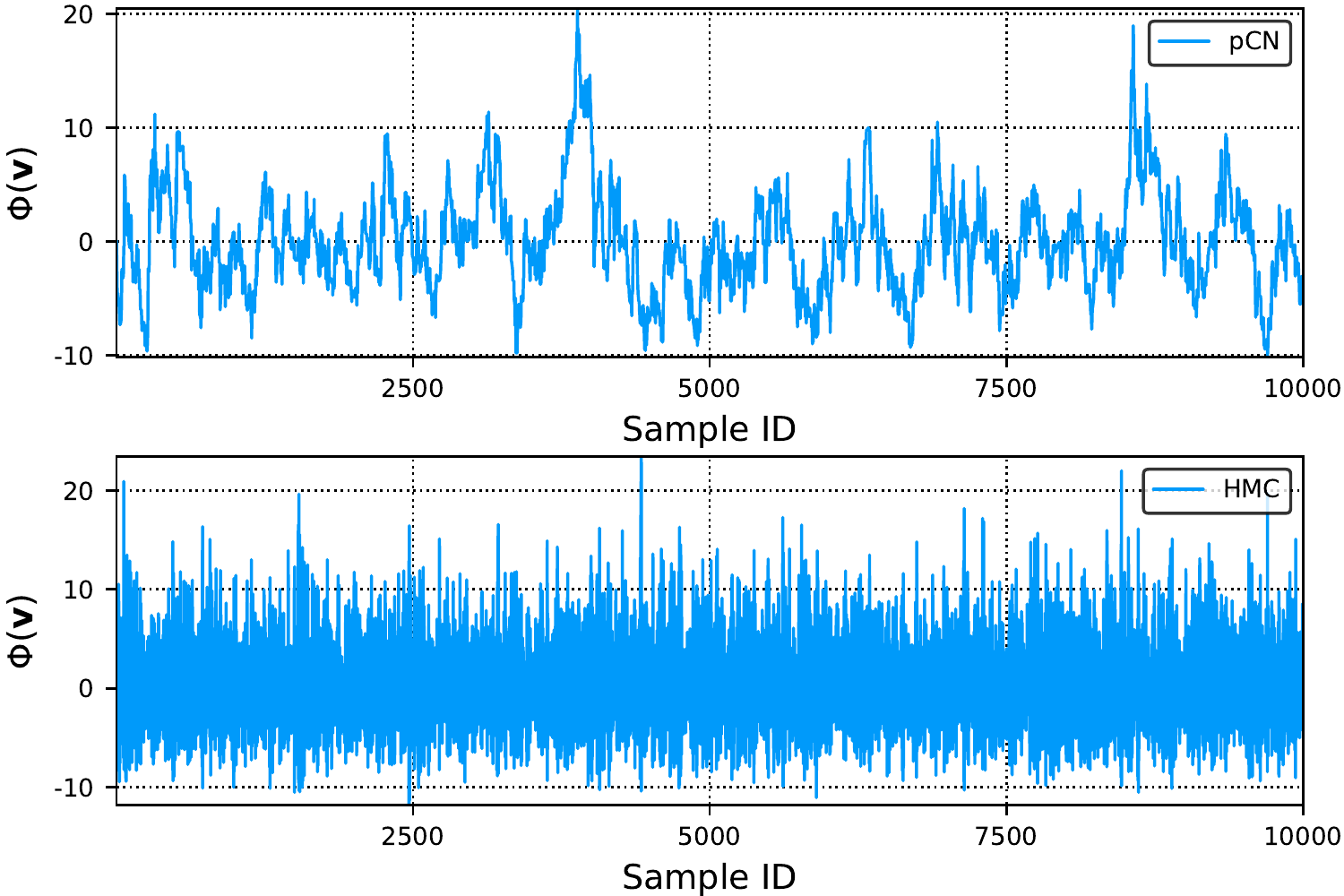}
\end{minipage}
\hfill
\begin{minipage}[b]{0.49\textwidth}
	\includegraphics[width=\textwidth]{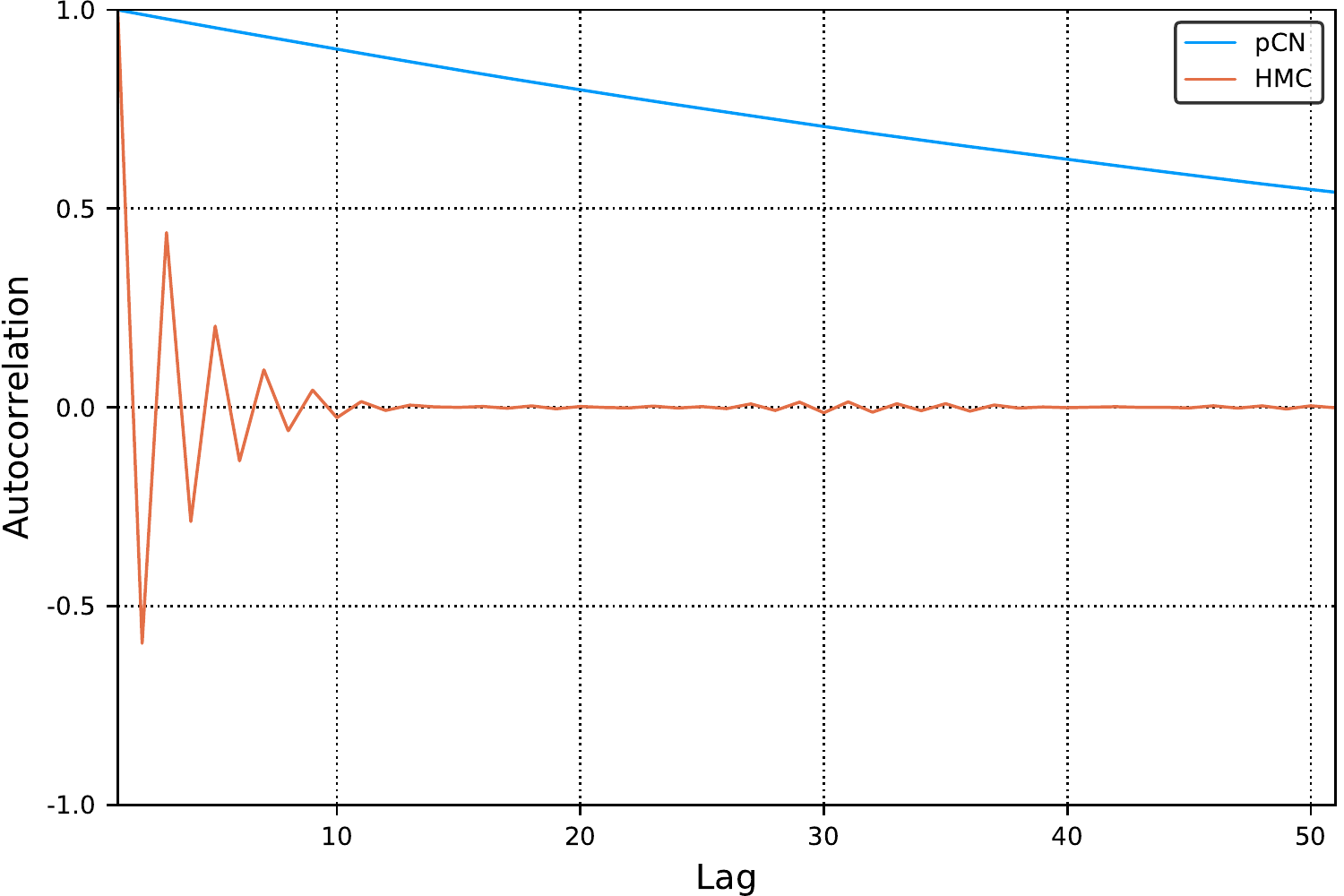}
\end{minipage}
\caption{Trace (left) and autocorrelation (right) of the potential $\Phi$.}
\label{fig:ex2_misfit}
\end{figure}

We can also see the contrast between pCN and HMC in the traces of vector field components shown in \cref{fig:ex2_trace}. The pCN samples move within a relatively limited range (a single probability mass), while the HMC samples occasionally jump between the different probability regions. In parameter testing, we observed that the frequency of these jumps increased roughly linearly with $\tau$ ($\tau=4$ produced twice as many jumps as $\tau=2$, for example) because longer integration times allowed the Hamiltonian system to evolve further, overcoming the areas of low probability that separate the regions of mass.

\begin{figure}[!htbp]
  \centering
\begin{minipage}[b]{0.49\textwidth}
	\includegraphics[width=\textwidth]{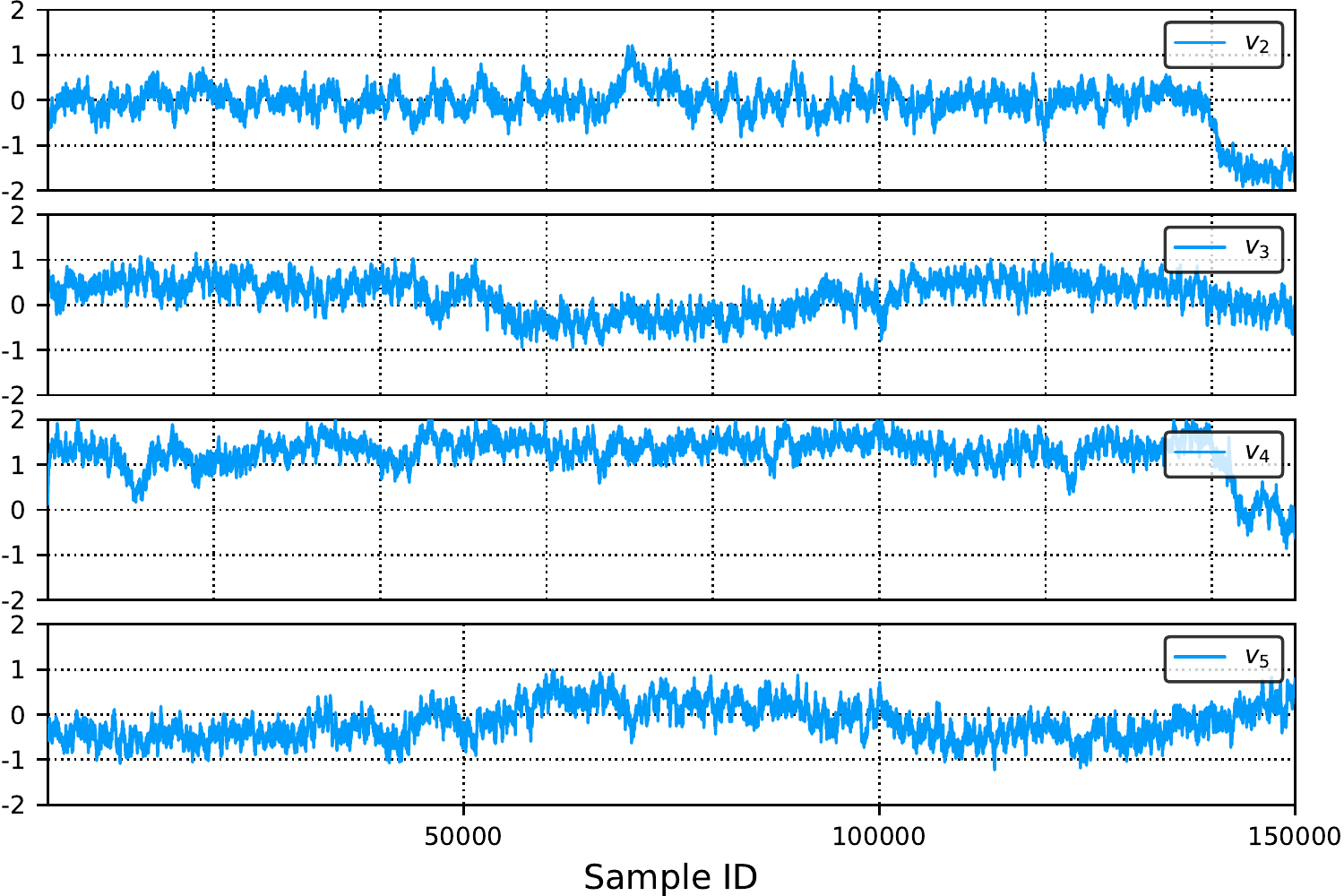}
\end{minipage}
\hfill
\begin{minipage}[b]{0.49\textwidth}
	\includegraphics[width=\textwidth]{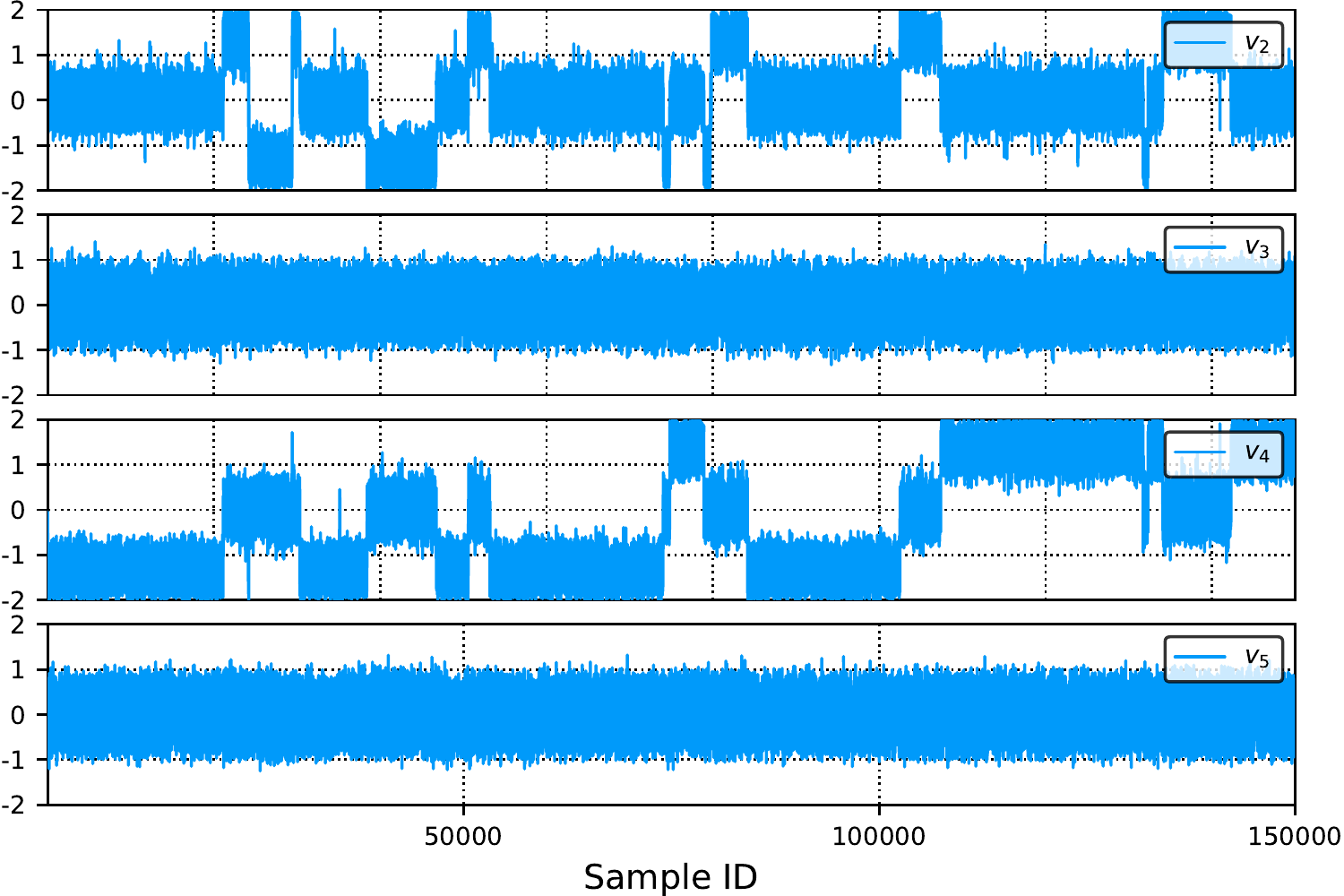}
\end{minipage}
\caption{\label{fig:ex2_trace}Trace of $\vfieldnobf_2,\dots,\vfieldnobf_5$ by sample number, pCN (left) and HMC (right).}
\end{figure}

\subsubsection{Convergence of Measures}
We now consider how the computed probability density functions
(normalized histograms) compare for each of the MCMC
methods. \cref{fig:ex2_posterior_method} shows the computed one- and two-dimensional distributions for the first few vector field components after 150,000
samples. This figure can be compared with the ``true'' 
distributions shown in \cref{fig:ex2_posterior}. The histograms for pCN show only some of the many distinct probability regions in the posterior, as the chain failed to jump across the regions of low probability. The
distributions for $\vfieldnobf_2$ and $\vfieldnobf_4$, in particular,
only show one or two of the three modes shown in
\cref{fig:ex2_posterior}. The histograms for HMC, by contrast, resolve all major features in the posterior, though the some of the features still exhibited imbalance when the chain terminated. The asymmetry in $\vfieldnobf_4$ and
$\vfieldnobf_9$ indicates that the chain has perhaps not fully
converged yet.

As in Example 1, we can get a feel for convergence of the methods by computing the total variation distance
between the ``true'' $\mu$ and $N$-sample
$\mu^{(N)}$ marginal distributions. The results are shown in
\cref{fig:ex2_totVarEvolve}. We note the ``sawtooth'' behavior for HMC, as the number of samples in each mode
of, for example, $\vfieldnobf_4$ slowly balances.

\begin{figure}[!htbp]
  \centering
\begin{minipage}[b]{0.49\textwidth}
	\includegraphics[width=\textwidth]{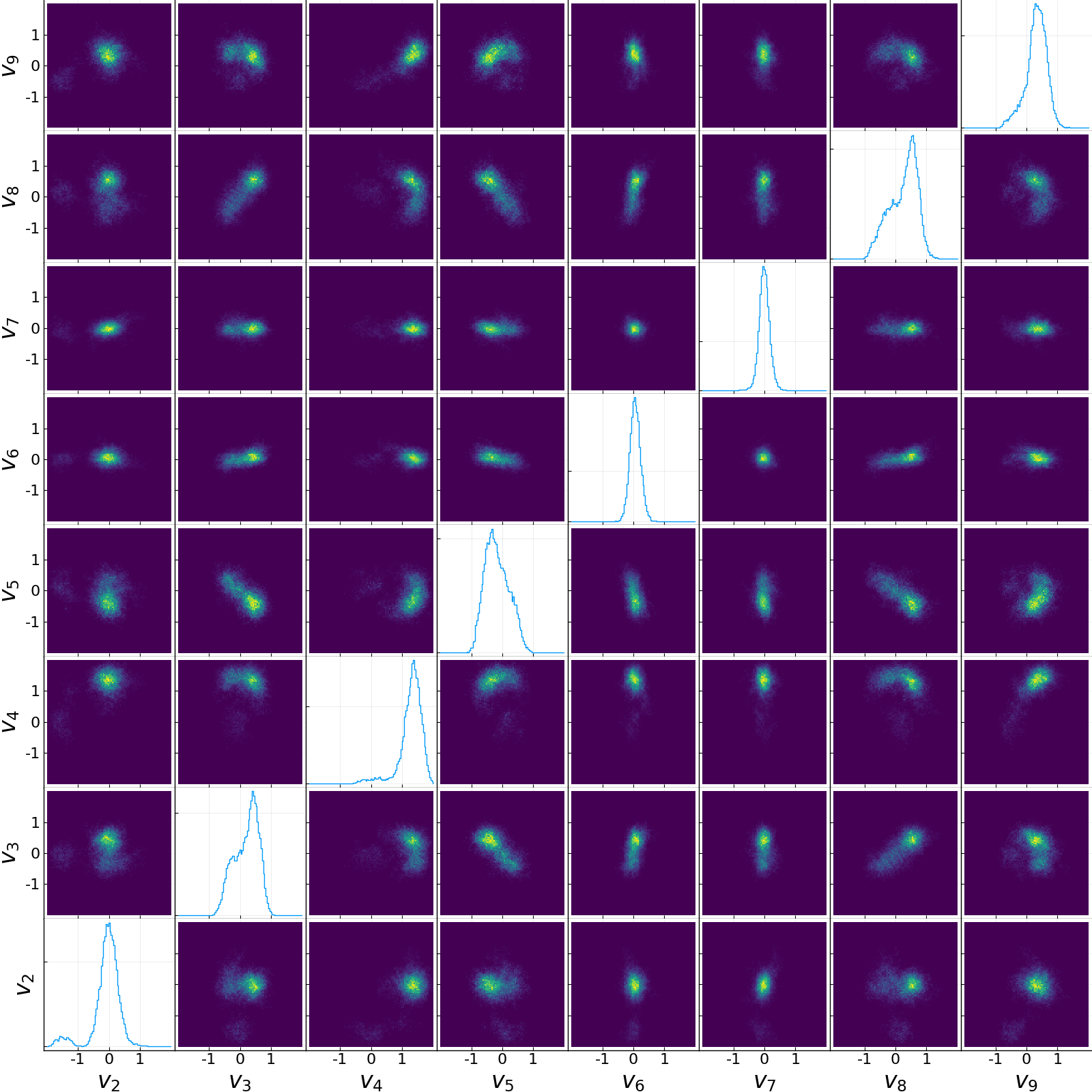}
\end{minipage}
\hfill
\begin{minipage}[b]{0.49\textwidth}
	\includegraphics[width=\textwidth]{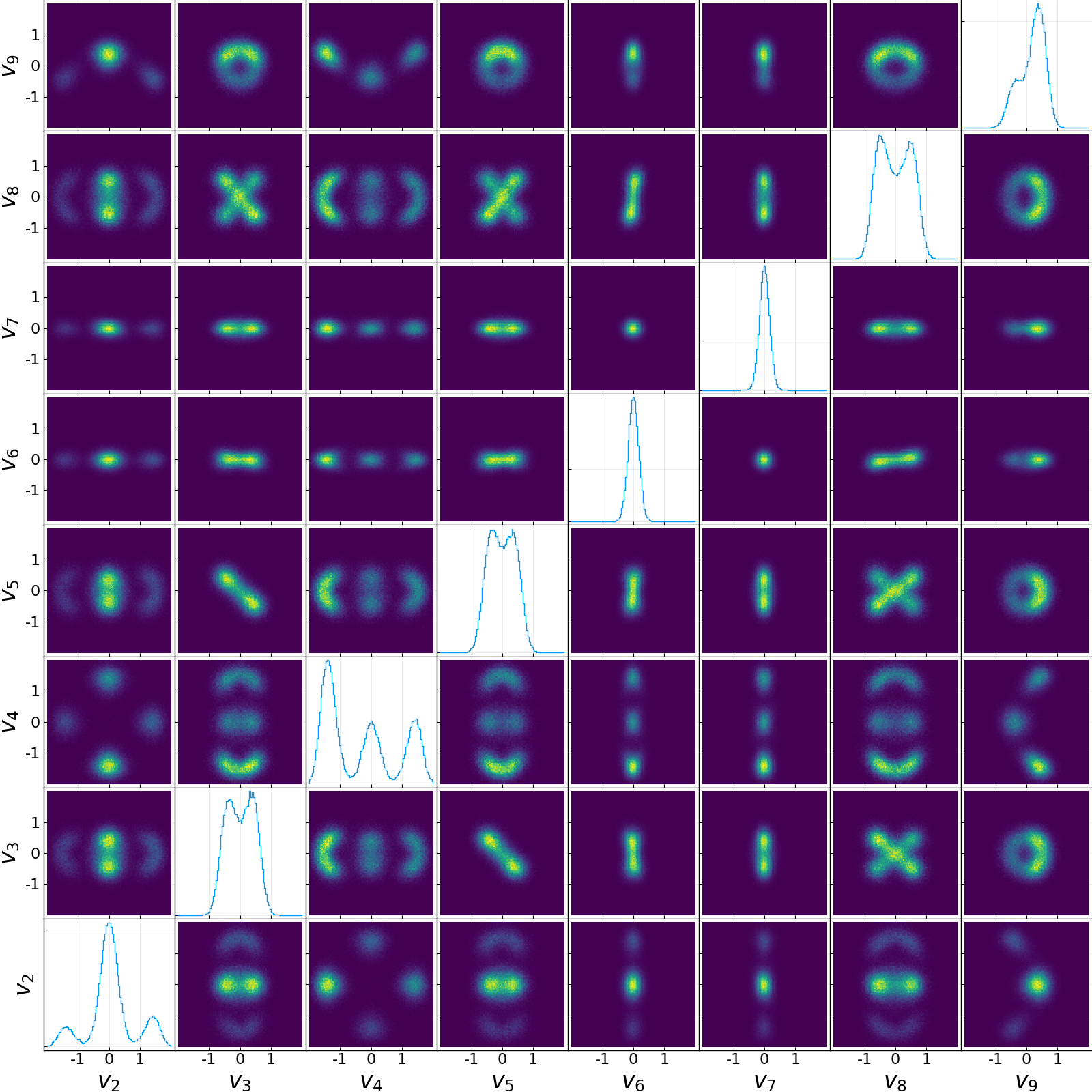}
\end{minipage}
\caption{\label{fig:ex2_posterior_method}Computed 1D and 2D marginal distributions for each of the first eight vector field components (out of 197) for 150,000 samples, pCN (left) and HMC (right).}
\end{figure}

\begin{figure}[!htbp]
  \centering
\begin{minipage}[b]{0.49\textwidth}
	\includegraphics[width=\textwidth]{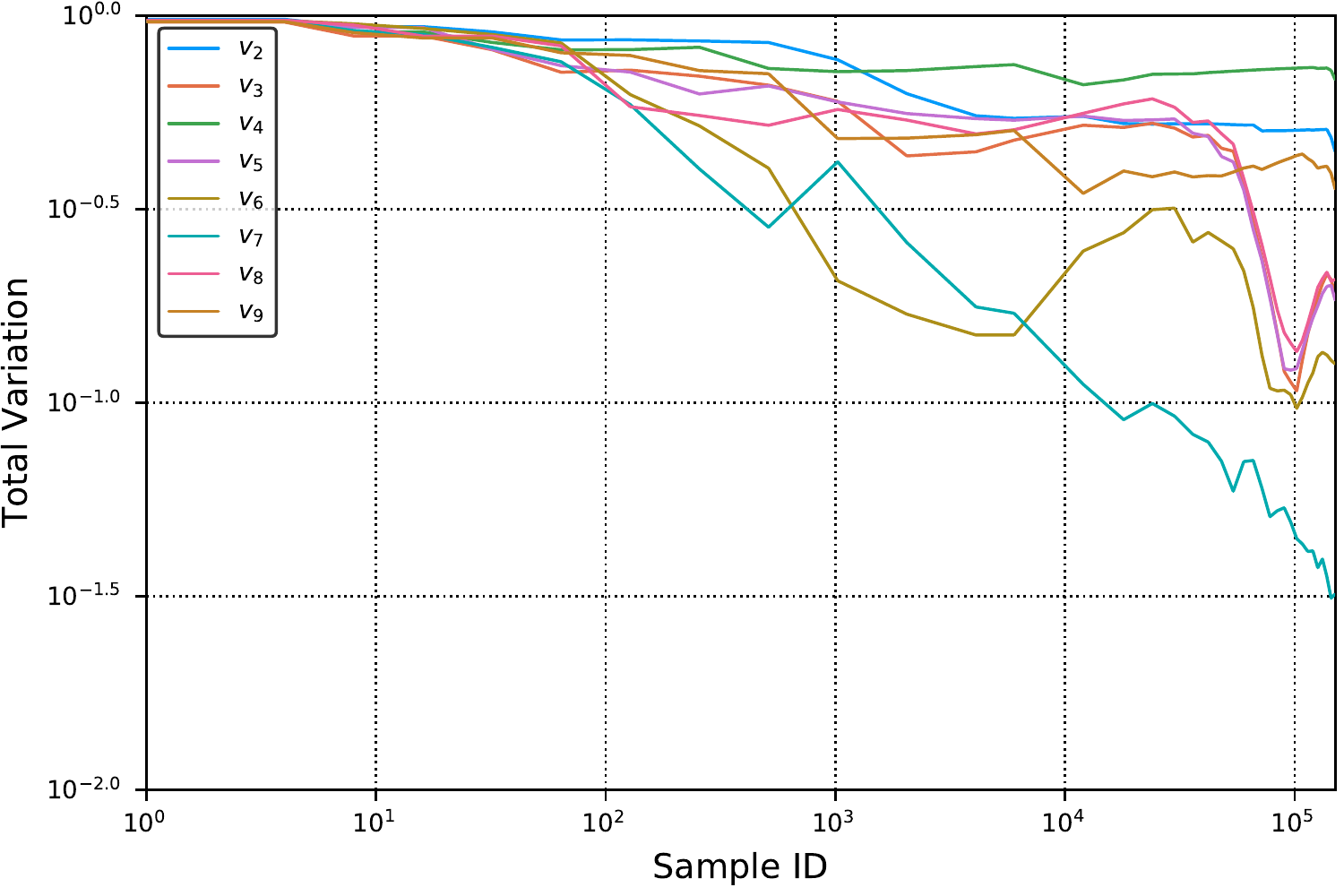}
\end{minipage}
\hfill
\begin{minipage}[b]{0.49\textwidth}
	\includegraphics[width=\textwidth]{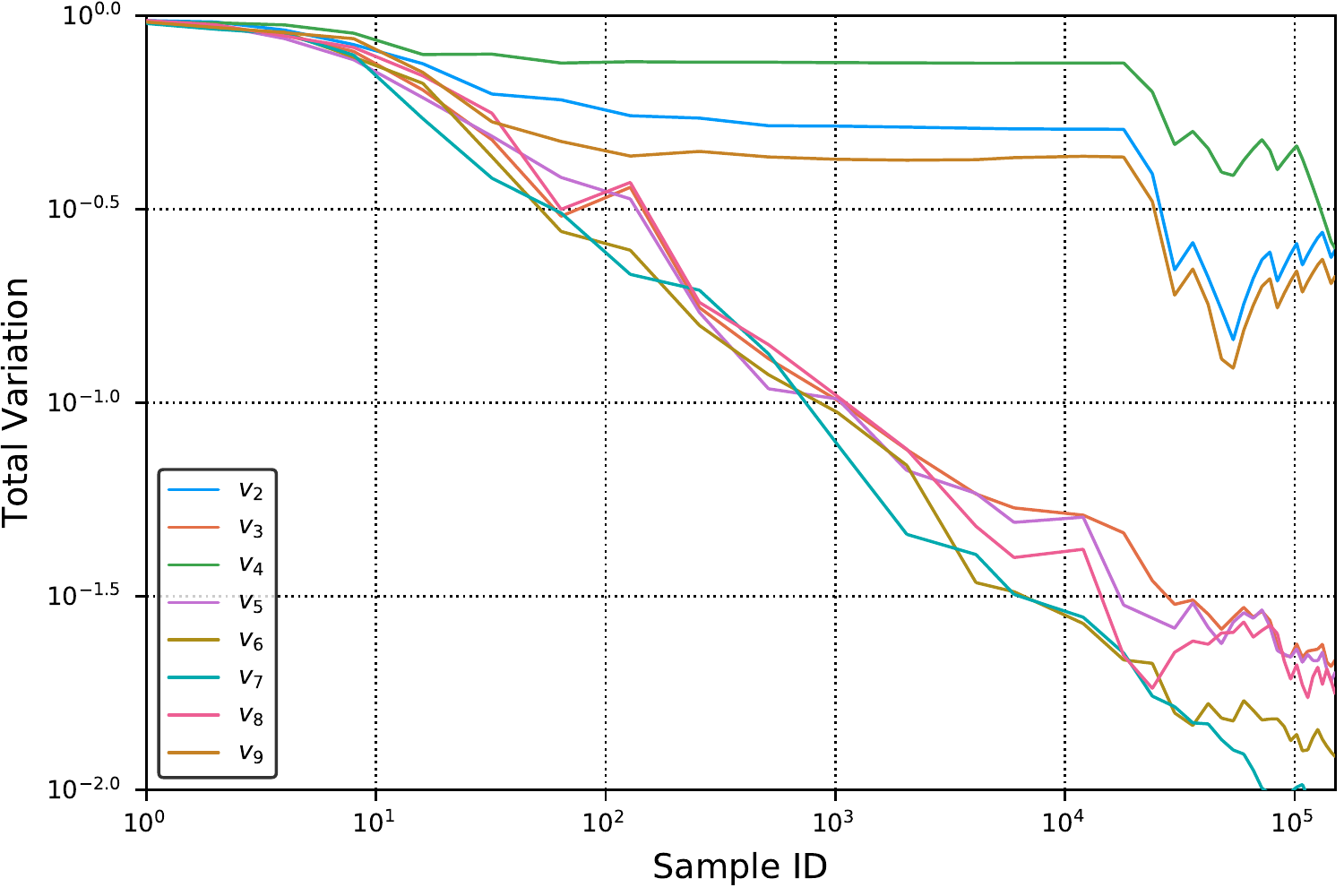}
\end{minipage}
\caption{\label{fig:ex2_totVarEvolve}Total variation norm between computed (150,000 samples) and ``true'' marginal probability density function for $\vfieldnobf_2,\dots,\vfieldnobf_9$, pCN (left) and HMC (right).}
\end{figure}

\subsubsection{Equal Runtime Comparison} \label{sec:ex2_runtime}
It is worth noting that, due to the selected values of the parameters $\epsilon$ and $\tau$, the HMC method used above required 32 PDE and 32 adjoint solves per sample, making it quite expensive relative to a sample of pCN. In our implementation, we were able to compute 125 pCN samples for each HMC sample. As a result, almost 19 million pCN samples could be computed in the time required to generate 150,000 HMC samples.  
For comparison in terms of equal computational cost, we now present the results of a single 19 million-sample pCN chain with the 150,000 sample HMC run shown earlier. \cref{fig:ex2_trace_runTime} compares the trace of the first few vector field components by sample number. We see that pCN does eventually achieve the jumps between states that HMC shows; however, the jumps are much less frequent for pCN, even when weighted by runtime, than for HMC (approximately 5 jumps vs. 20 jumps, respectively). 

\begin{figure}[!htbp]
  \centering
\begin{minipage}[b]{0.49\textwidth}
	\includegraphics[width=\textwidth]{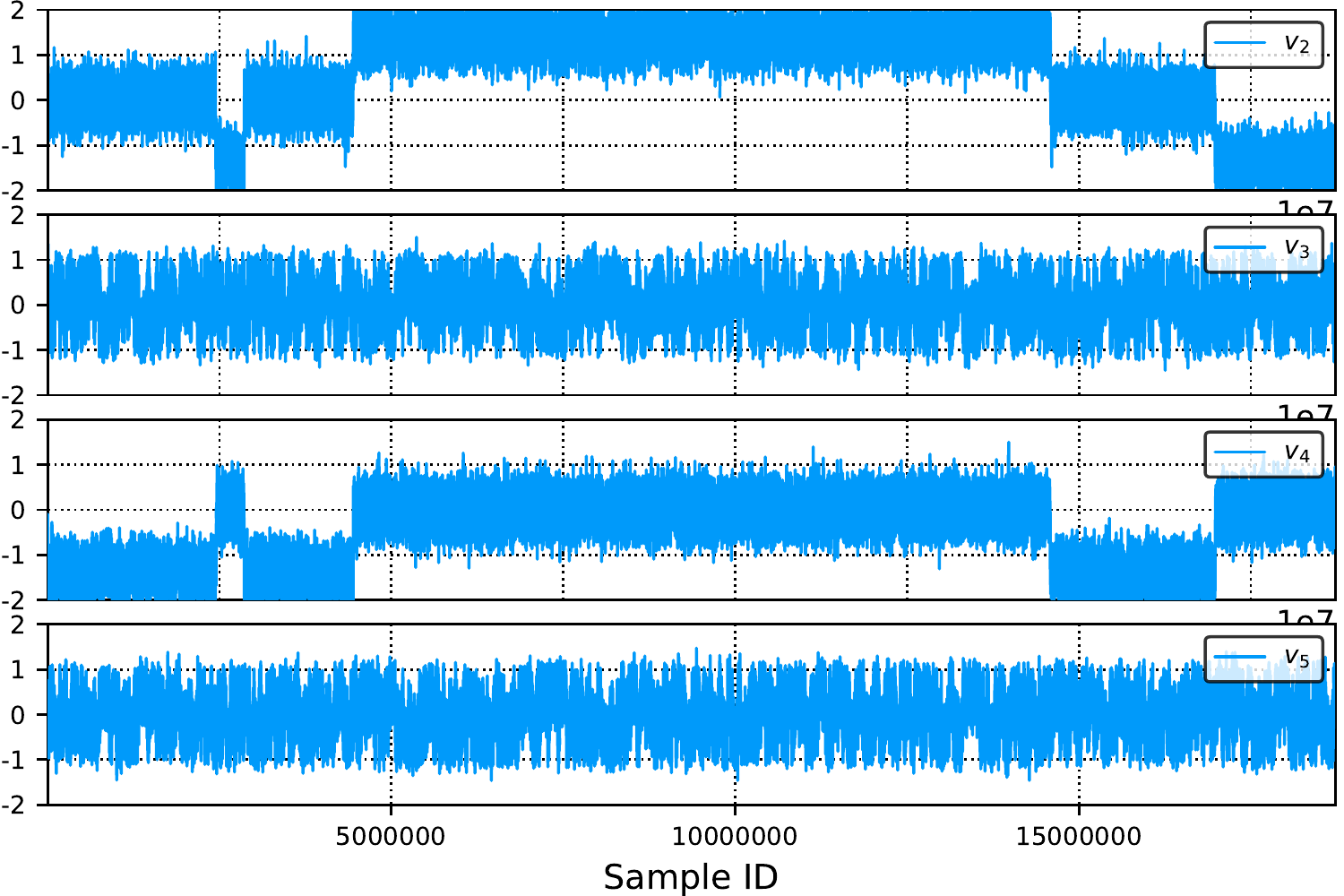}
\end{minipage}
\hfill
\begin{minipage}[b]{0.49\textwidth}
	\includegraphics[width=\textwidth]{figures/ex2/job_hmc/v_trace_150000.pdf}
\end{minipage}
\caption{\label{fig:ex2_trace_runTime}Trace of $\vfieldnobf_2,\dots,\vfieldnobf_5$ by sample number for runtime equivalent to 150,000 HMC samples, pCN (left) and HMC (right).}
\end{figure}

\cref{fig:ex2_posterior_method_runTime} shows the computed one- and two-dimensional histograms for the 19 million-sample pCN chain, which can be compared with the HMC figure in \cref{fig:ex2_posterior_method}. \cref{fig:ex2_totVar2dEvolve_runTime} compares the evolution of the total variation norm between the computed and ``true'' two-dimensional distributions for pCN (blue) and HMC (orange) chains of equal runtime. 
(Convergence of the $(\vfieldnobf_2,\vfieldnobf_9)$ correlation structure, for example, is shown in the bottom right subplot.) 
These two plots are more equivocal between the two methods. pCN produces better convergence for histograms with one probability mode (e.g., the pair $(\vfieldnobf_6,\vfieldnobf_7)$); for these components, the extra computations involved in HMC (which was tuned for larger jumps) do not appear to provide a benefit. However, the two methods exhibited very similar convergence for multimodal distributions (e.g., those associated with $\vfieldnobf_2$ or $\vfieldnobf_4$). HMC also reached all of the modes in the distribution, while pCN generated no samples in the mode near $\vfieldnobf_4 \approx 1.5$. Overall, it appears that pCN did a better job (on an equal-runtime basis) of sampling within modes while HMC did a better job of finding modes.

\begin{figure}[!htbp]
  \centering
\begin{minipage}[b]{0.49\textwidth}
	\includegraphics[width=\textwidth]{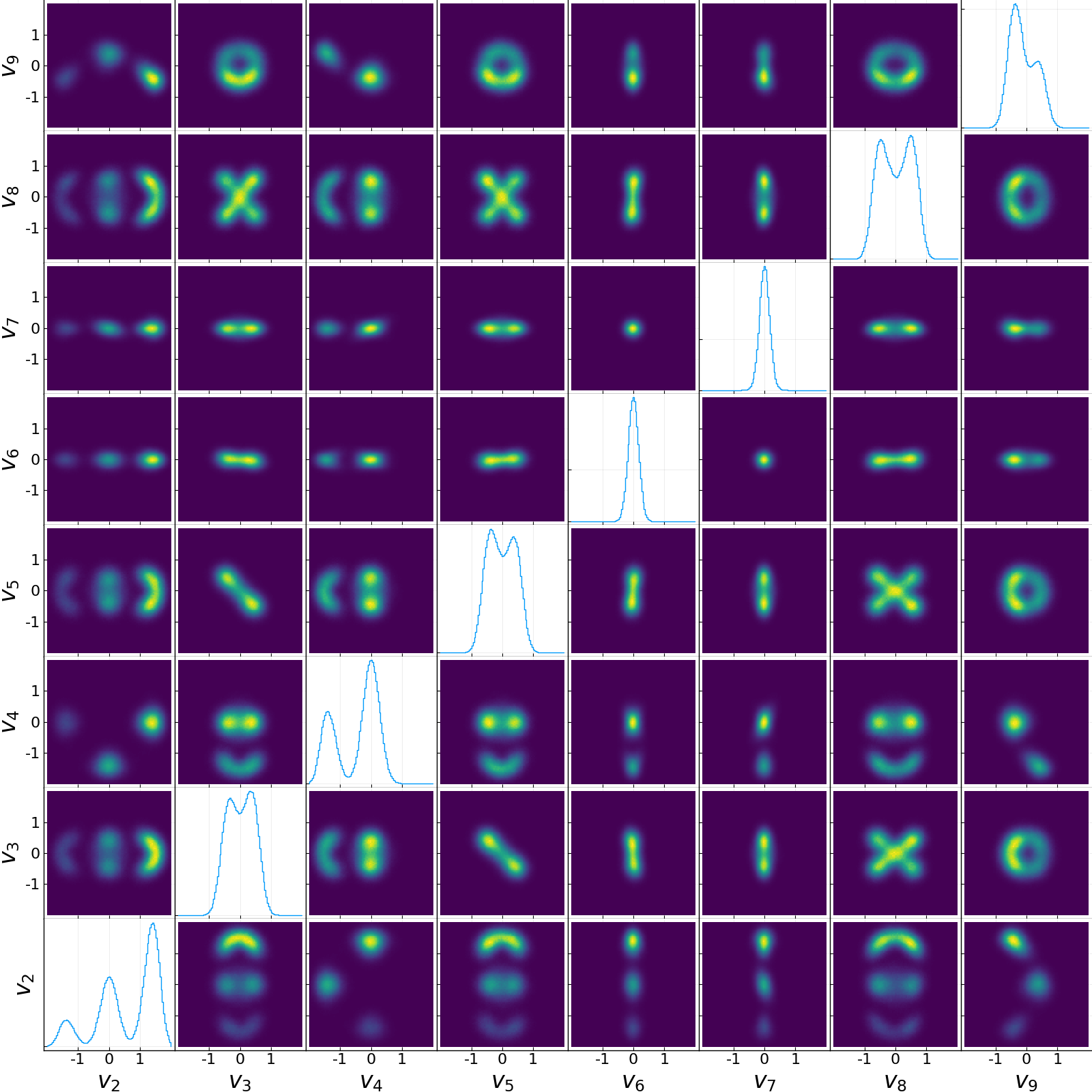}
  \caption{\label{fig:ex2_posterior_method_runTime}Computed 1D and 2D marginal distributions for each of the first eight vector field components (out of 197) for 19 million samples of pCN (same runtime as 150,000 samples of HMC).}
\end{minipage}
\hfill
\begin{minipage}[b]{0.49\textwidth}
	\includegraphics[width=\textwidth]{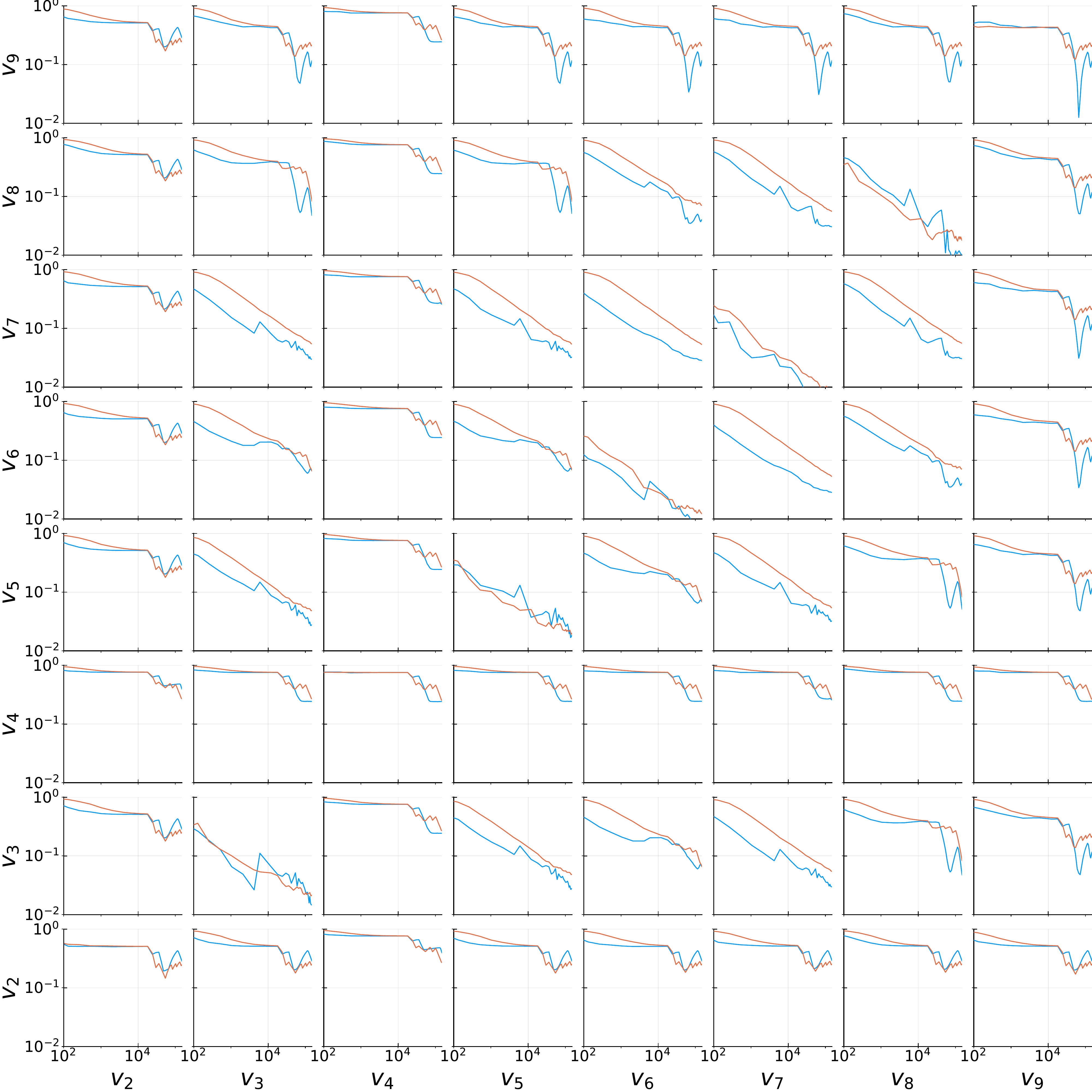}
  \caption{\label{fig:ex2_totVar2dEvolve_runTime}Total variation norm between computed and ``true'' 2D probability density for pairs of vector field components for pCN (blue) and HMC (orange) of equal computational time.}
\end{minipage}
\end{figure}

\subsection{Summary of Numerical Experiments}
To summarize the results of Examples 1 and 2, we see trade-offs between the MCMC methods:
\begin{itemize}
  \item pCN provides an inexpensive method to generate samples and explore local regions of a probability measure, with a free parameter $\beta$ that can be tuned to the problem. 
  \item HMC samples are more computationally expensive to generate; for the HMC test cases reported here, each HMC sample took 39-125 times as much time as one pCN sample, though in general this ratio will be dictated by the cost of the gradient computation and the choice of number of integration steps $\frac{\tau}{\epsilon}$. 
  \item In our numerical experiments, for posterior distributions with simple structure (e.g., Example 1 or some parts of Example 2), pCN exhibits similar (as measured by equal number of samples) or better (as measured by equal runtime) convergence to HMC.
  \item In our numerical experiments, for posterior distributions with more complicated structure (some components in Example 2), pCN still appeared to do a better job (for equal runtime) of sampling within probability modes, while HMC appeared to do a better job of jumping between states to find new modes. The overall impact on performance is difficult to discern and will likely depend heavily on the desired observables.
  \item Finally, we note that implementation of HMC is much more involved than pCN. With a working PDE solver, pCN can be implemented in a matter of minutes or hours. Developing a gradient solver and implementing the HMC leap frog integration (and debugging both) -- if even possible -- can require on the order of days or months of development time, depending on the complexity of the PDE solver. 
\end{itemize}

\clearpage
\appendix

\section{Selected Numerical Results for the IS and MALA Algorithms}\label{sec:results_is_mala}
In the main body of the paper, we present MCMC results for the preconditioned Crank-Nicolson (pCN, \cref{alg:mcmcpcn}) and Hamiltonian (HMC, \cref{alg:mcmchmc}) MCMC algorithms. Here we also present results for the independence sampler (IS) and Metropolis-adjusted Langevin (MALA) methods. IS, a special case of pCN when $\mu_0$ is Gaussian, draws proposals from the prior and requires one PDE solve per iteration. MALA uses one PDE and adjoint solve per iteration and is therefore more computationally expensive than iterations of IS or pCN but in general less computationally expensive than those of HMC.
\begin{algorithm}
\caption{Independence Sampler MCMC.}\label{alg:mcmcind}
\begin{algorithmic}[1]
\item Given sample $\mcmcsamp^{(k)}$
\item Propose $\mcmccand \sim \mu_0$
\item Set $\mcmcsamp^{(k+1)} = \mcmccand$ with probability
  $\min\left\{1,\exp\left(\Phi\left(\mcmcsamp^{(k)}\right)
      - \Phi(\mcmccand) \right)\right\}$, otherwise $\mcmcsamp^{(k+1)} = \mcmcsamp^{(k)}$
\end{algorithmic}
\end{algorithm}

\begin{algorithm}
\caption{Metropolis-Adjusted Langevin (MALA) MCMC.}\label{alg:mcmcmala}
\begin{algorithmic}[1]
\item Given free parameter $h$ and sample $\mcmcsamp^{(k)}$
\item Propose $\mcmccand 
  = \frac{2-h}{2+h}\mcmcsamp^{(k)} -\frac{2h}{2+h}\covar D\Phi(\mcmcsamp^{(k)}) 
  + \frac{\sqrt{8h}}{2+h} \xi^{(k)}$, $\xi^{(k)} \sim N(0,\covar)$
\item Set $\mcmcsamp^{(k+1)} = \mcmccand$ with probability
  $\alpha(\mcmcsamp^{(k)},\mcmccand)=1 \wedge
  \exp\left(\rho\left(\mcmcsamp^{(k)},\mcmccand\right) -
    \rho\left(\mcmccand,\mcmcsamp^{(k)}\right) \right)$,
  where $\rho(\mcmcsamp,\mcmccand)$ is given by 
  \begin{equation}
    \rho(\mcmcsamp,\mcmccand) 
    = \Phi(\mcmcsamp) + \half \ip{\mcmccand-\mcmcsamp}{D\Phi(\mcmcsamp)} 
      + \frac{h}{4}\ip{\mcmcsamp+\mcmccand}{D\Phi(\mcmcsamp)} + \frac{h}{4}\norm{\covar^{\half}D\Phi(\mcmcsamp)}^2
    \label{eq:mh_mala_rho}
  \end{equation}
  Otherwise $\mcmcsamp^{(k+1)} = \mcmcsamp^{(k)}$ (unchanged)
\end{algorithmic}
\end{algorithm}

\subsection{Example 1}
Here we present IS and MALA results for numerical Example 1 (see \cref{sec:ex1_simple}). For MALA, we chose $h=0.005$ to approximate the optimal acceptance rate of $57\%$ from \cite{roberts2001optimal}. The actual acceptance rates for IS and MALA were $0.012\%$ and $53.7\%$, respectively. The chain concluded with thousands of consecutive rejections. \cref{fig:ex1_totVarEvolve_sm}, an extension of \cref{fig:ex1_totVarEvolve_sm} to four methods, shows convergence, as measured by total variation norm, of the 1D marginal distributions to the ``true'' marginal distributions shown in the diagonal of \cref{fig:ex1_posterior}. We observe IS failing to converge due to the high number of rejections. MALA converges at roughly the same rate as pCN.

\begin{figure}[!htbp]
  \centering
\begin{minipage}[b]{0.4\textwidth}
	\includegraphics[width=\textwidth]{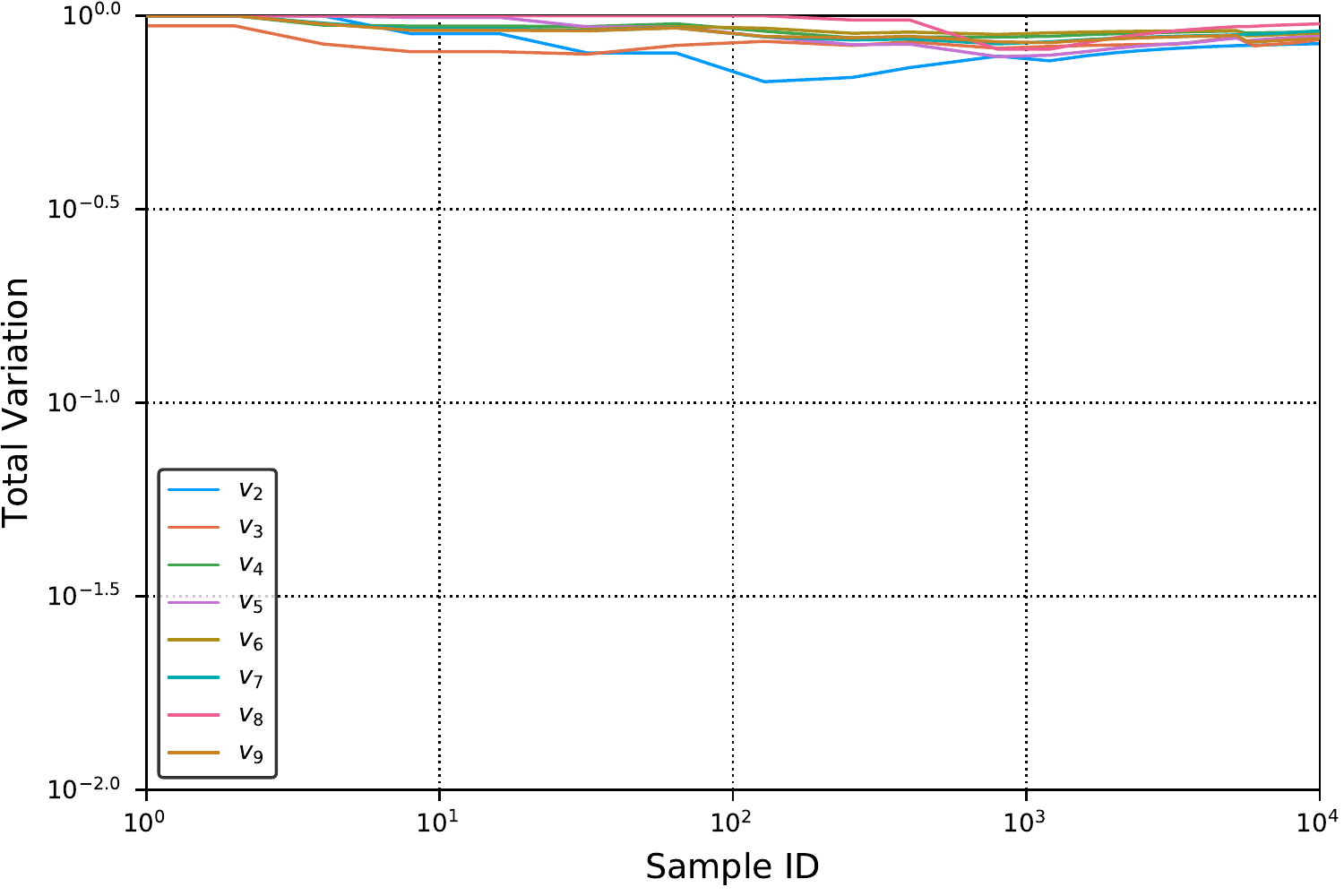}
\end{minipage}
\qquad
\begin{minipage}[b]{0.4\textwidth}
	\includegraphics[width=\textwidth]{figures/ex1/job_pcn1/obs/totVar_evolve_loglog_10000.pdf}
\end{minipage}
\begin{minipage}[b]{0.4\textwidth}
	\includegraphics[width=\textwidth]{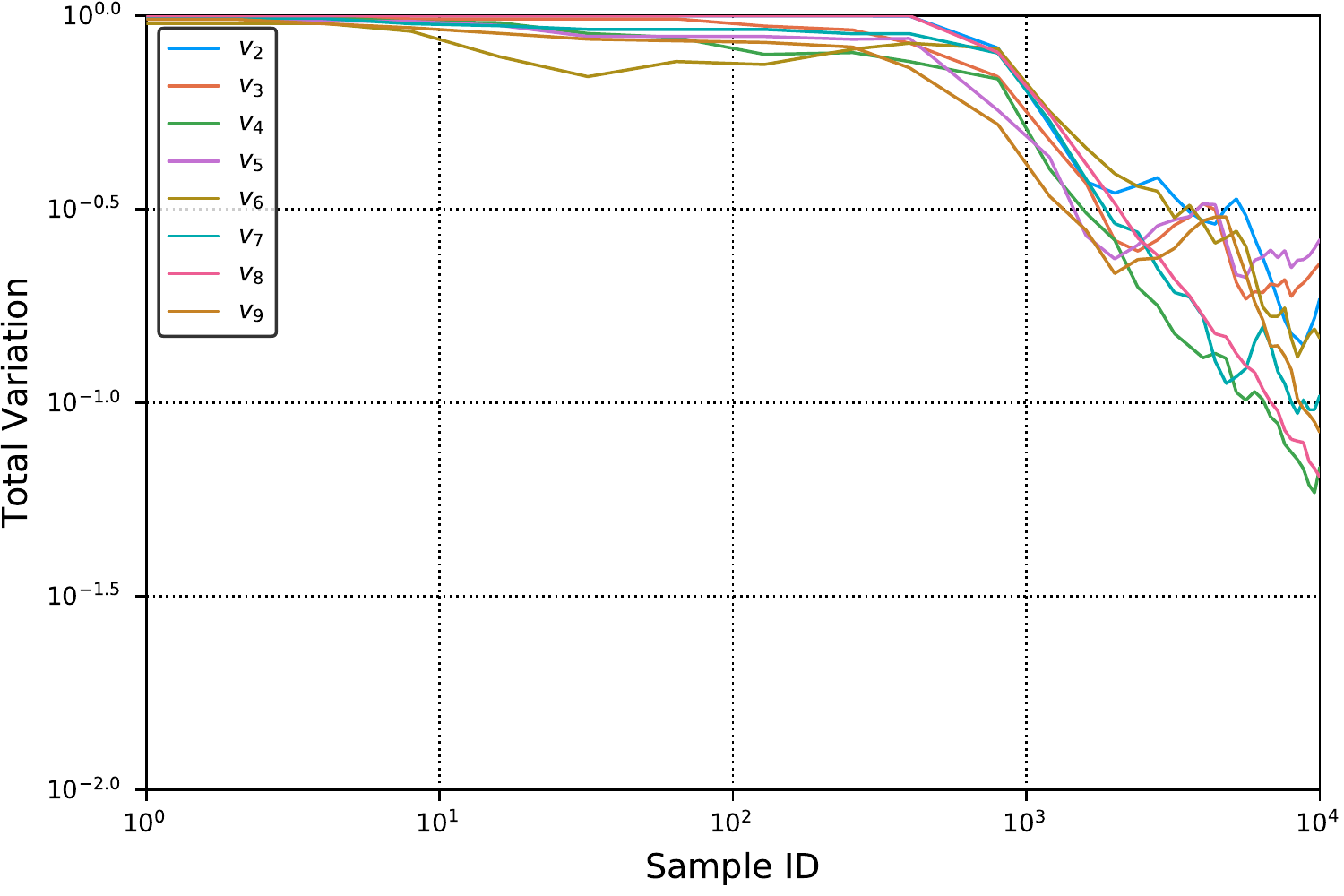}
\end{minipage}
\qquad
\begin{minipage}[b]{0.4\textwidth}
	\includegraphics[width=\textwidth]{figures/ex1/job_hmc/obs/totVar_evolve_loglog_10000.pdf}
\end{minipage}

\caption{\label{fig:ex1_totVarEvolve_sm}Total variation norm between computed and ``true'' marginal probability density function for $\vfieldnobf_2,\dots,\vfieldnobf_9$ for 10,000 samples, Example 1. Top Left: IS, Top Right: pCN, Bottom Left: MALA, Bottom Right: HMC.}
\end{figure}

\cref{fig:ex1_totVarEvolve_runTime_sm}, an extension of \cref{fig:ex1_totVarEvolve_runTime} to four methods, shows the same total variation convergence normalized by runtime. IS samples are roughly the same cost to generate as pCN samples, so 39 IS samples were generated per HMC sample. Similarly, an HMC sample took roughly $8$ times as long to generate as a MALA sample because the version of HMC used in Example 1 required eight PDE and adjoint solves per sample, while MALA only requires one of each. Thus, we can reweight IS and pCN samples by $39$ and MALA samples by $8$ to get a comparison of the sampling accuracy per unit time. IS again fails to exhibit any meaningful convergence. MALA converges somewhat slower than either pCN or HMC, the former because MALA samples took more than twice as long to generate due to the need for an adjoint solve at each iteration.

\begin{figure}[!htbp]
  \centering
\begin{minipage}[b]{0.4\textwidth}
	\includegraphics[width=\textwidth]{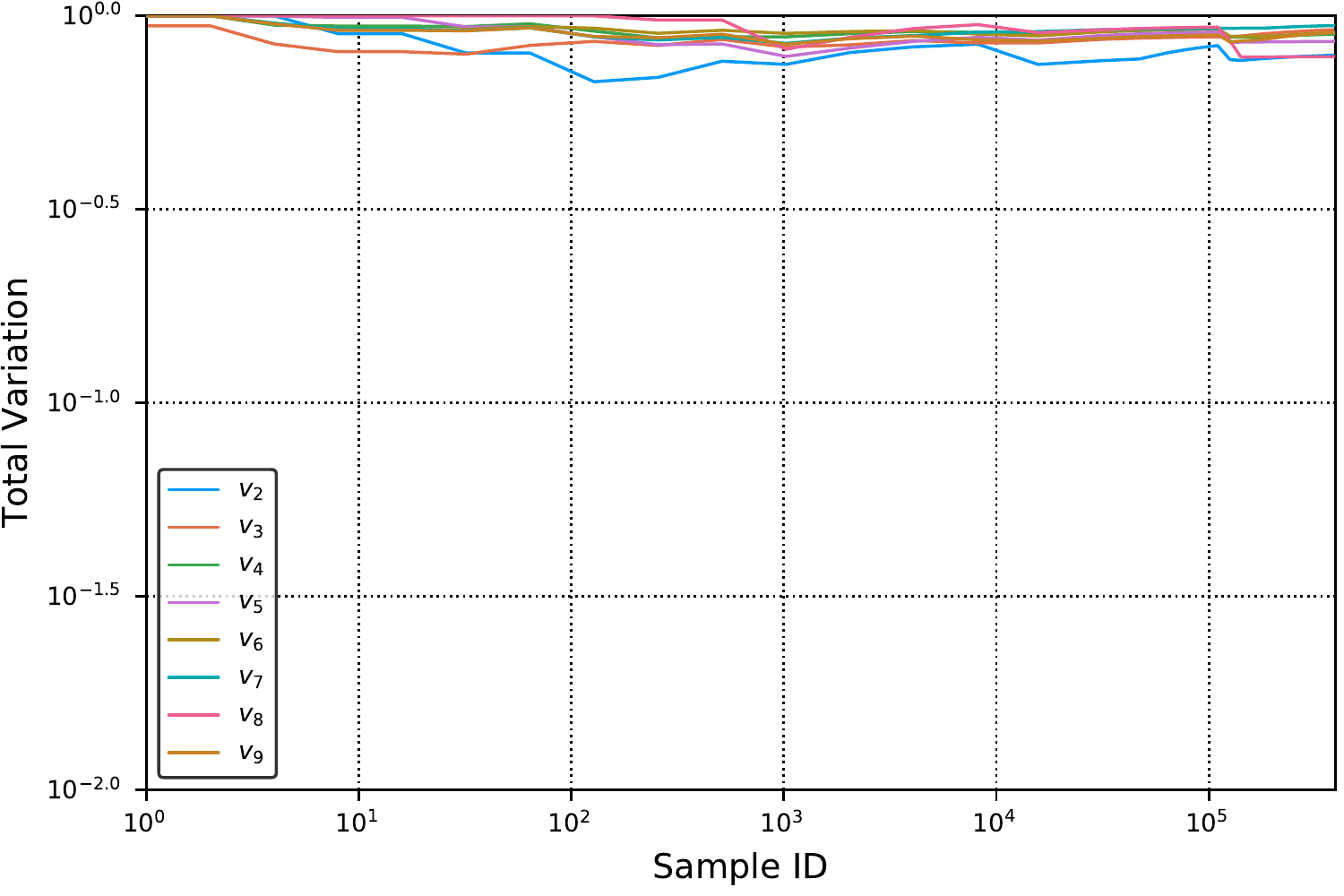}
\end{minipage}
\qquad
\begin{minipage}[b]{0.4\textwidth}
	\includegraphics[width=\textwidth]{figures/ex1/job_pcn1/obs/totVar_evolve_loglog_391672.pdf}
\end{minipage}
\begin{minipage}[b]{0.4\textwidth}
	\includegraphics[width=\textwidth]{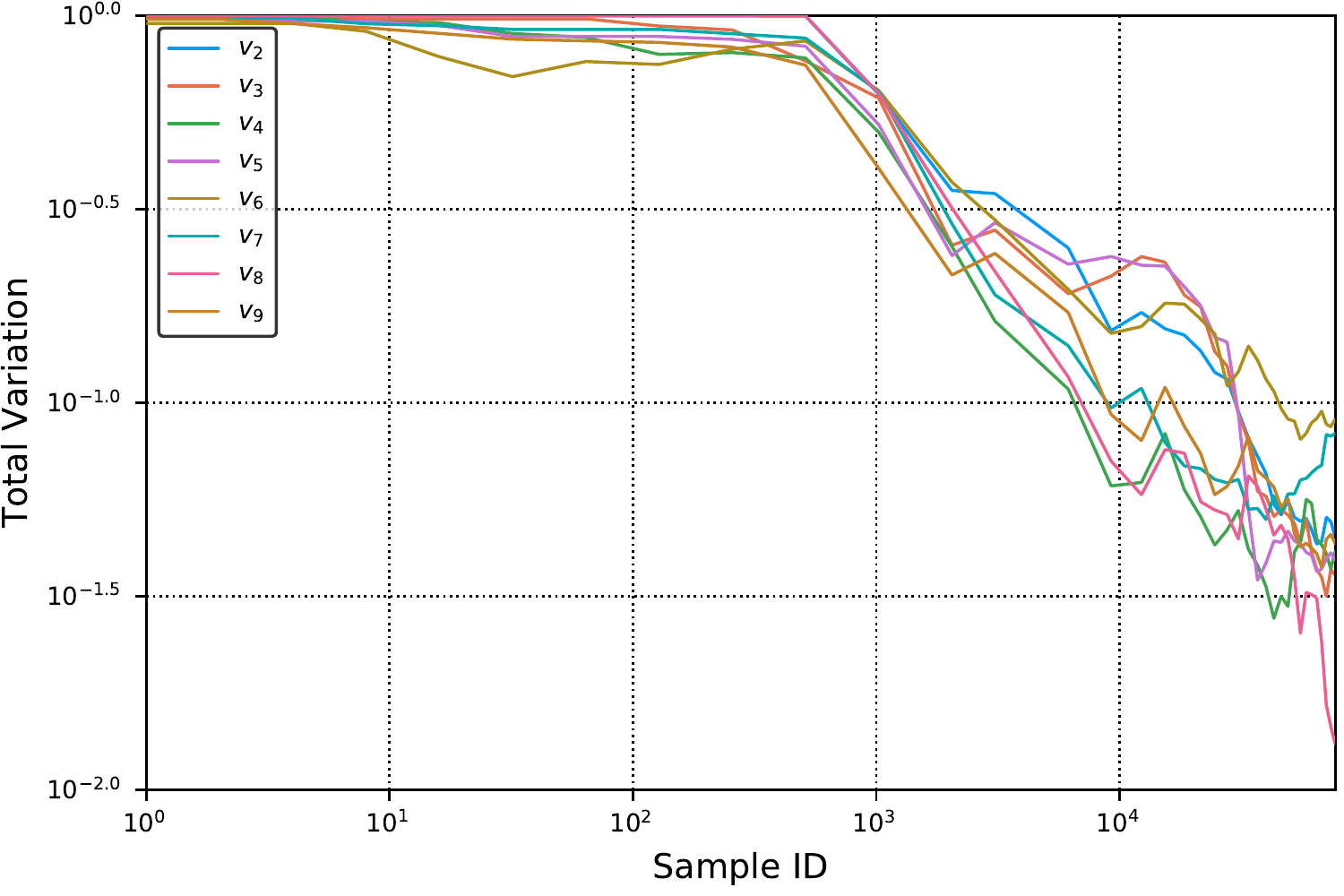}
\end{minipage}
\qquad
\begin{minipage}[b]{0.4\textwidth}
	\includegraphics[width=\textwidth]{figures/ex1/job_hmc/obs/totVar_evolve_loglog_10000_noendpt.pdf}
\end{minipage}

\caption{\label{fig:ex1_totVarEvolve_runTime_sm}Total variation norm between computed and ``true'' marginal probability density function for $\vfieldnobf_2,\dots,\vfieldnobf_9$ for runtime equivalent to 10,000 HMC samples, Example 1. Top Left: IS, Top Right: pCN, Bottom Left: MALA, Bottom Right: HMC.}
\end{figure}

\subsection{Example 2}
For Example 2 (see \cref{sec:ex2_multihump}), we chose $h=0.001$ for MALA to again match the optimal acceptance rate from \cite{roberts2001optimal}. \cref{fig:ex2_posterior_method_sm}, which can be compared with analogous plots for pCN and HMC in \cref{fig:ex2_posterior_method}, shows the computed 1D and 2D histograms for each of the first eight vector field components (out of 197) for 100,000 MALA samples. We see that the MALA chain failed to resolve the multiple modes of the posterior measure seen in \cref{fig:ex2_posterior}.
\begin{figure}[!htbp]
  \centering
\begin{minipage}[b]{0.49\textwidth}
	\includegraphics[width=\textwidth]{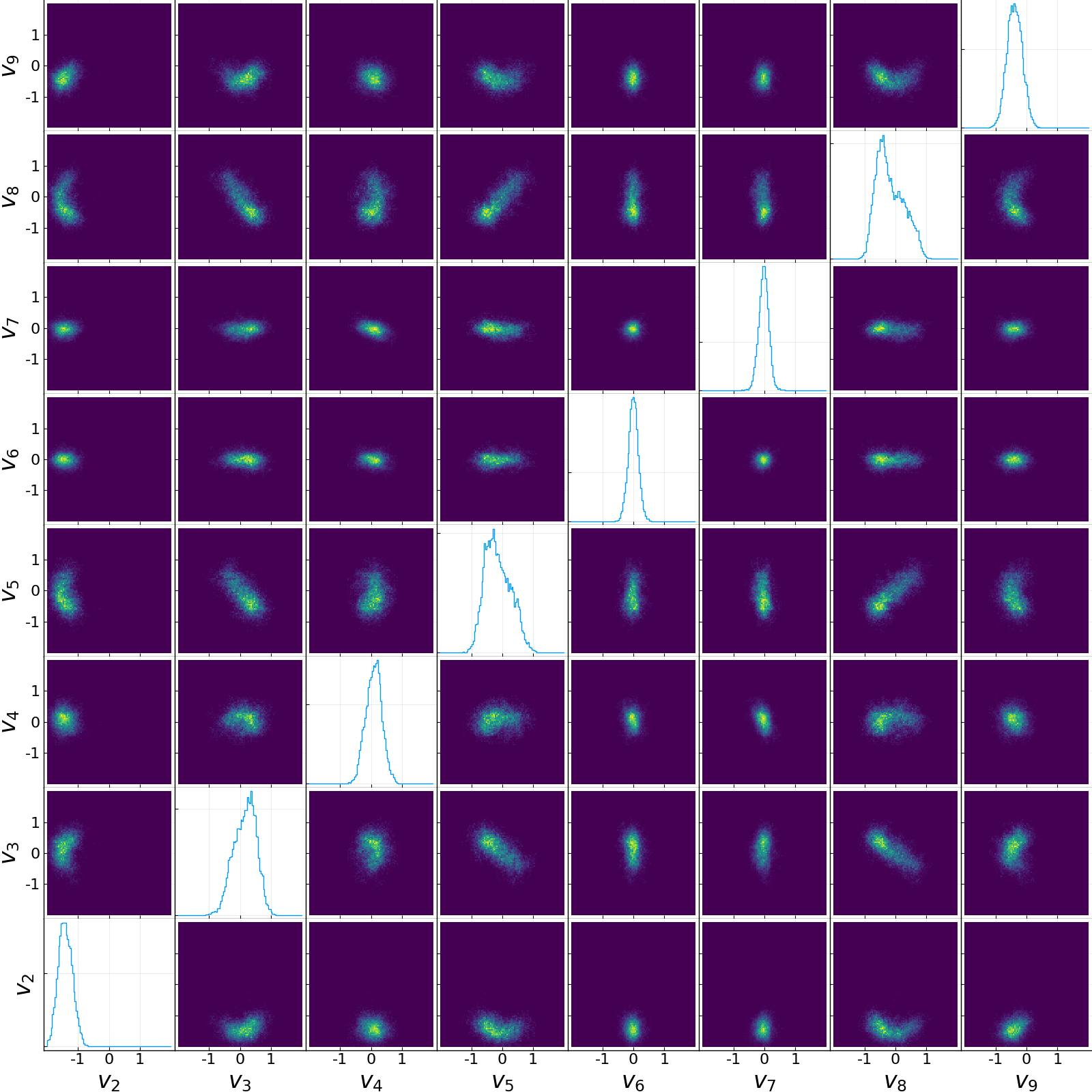}
\end{minipage}
\caption{\label{fig:ex2_posterior_method_sm}Computed 1D and 2D marginal distributions for each of the first eight vector field components (out of 197) for 100,000 samples of MALA, Example 2. (Compare with \cref{fig:ex2_posterior_method}.)}
\end{figure}

\section{Convergence of Observables}\label{sec:results_obs}
In this section, we compare pCN and HMC convergence for a series of observables that are of interest to the passive scalar community. These observables, which involve both $\vfield$ and $\pdesol$, are summarized in
\cref{tab:observables}. Convergence is measured as relative error of the mean vs.~the mean given by the computed ``true'' posterior measures shown in \cref{fig:ex1_posterior} for Example 1 and \cref{fig:ex2_posterior} for Example 2. 
\begin{table}[h]
{\footnotesize
  \caption{Observables.}  \label{tab:observables}
\begin{center}
  \begin{tabular}{ | c | c | }
  \hline
  Observable & Formula \\
  \hline    \hline
  Mean Scalar Variance \cite{warhaft2000passive} & $\|\pdesol-\overline{\pdesol}\|_{L^2}^2$ \\\hline
  Mean Scalar Dissipation Rate \cite{shraiman2000scalar,warhaft2000passive} & $\epsilon_\pdesol = 2\kappa\norm{\nabla \pdesol}_{L^2}^2$ \\\hline
  Enstrophy \cite{kupiainen2000statistical,baiesi2005enstrophy} & $\half \| \nabla \times \vfield \|_{L^2}^2$ \\\hline
  Enstrophy Dissipation Rate & $\norm{\nabla \left( \nabla \times \vfield \right)}_{L^2}^2$ \\\hline
  Scalar Differences  \cite{shraiman2000scalar,warhaft2000passive} & $\Delta_r \pdesol = \pdesol(\x+\mathbf{r},t)-\pdesol(\x,t)$ \\\hline
  \end{tabular}
\end{center}
}
\end{table}

\subsection{Example 1}
In this section, we compare convergence of observables for the single-mode posterior in Example 1 (see \cref{sec:ex1_simple}).  
\cref{fig:ex1_obsEvolve} shows the relative error in the mean value of
the first four observables in \cref{tab:observables} at $t=1$, through 
10,000 samples.\footnote{The sharp downward dips in these relative 
error plots occur when a cumulative moving average drifts past the 
``true'' value. At these points, the relative error is zero, so a 
log plot exhibits a downward dip toward $\log(0)=-\infty$.} 
HMC converges an order of magnitude more quickly than pCN.
\begin{figure}[!htbp]
  \centering
  \begin{minipage}[b]{0.49\textwidth}
  	\includegraphics[width=\textwidth]{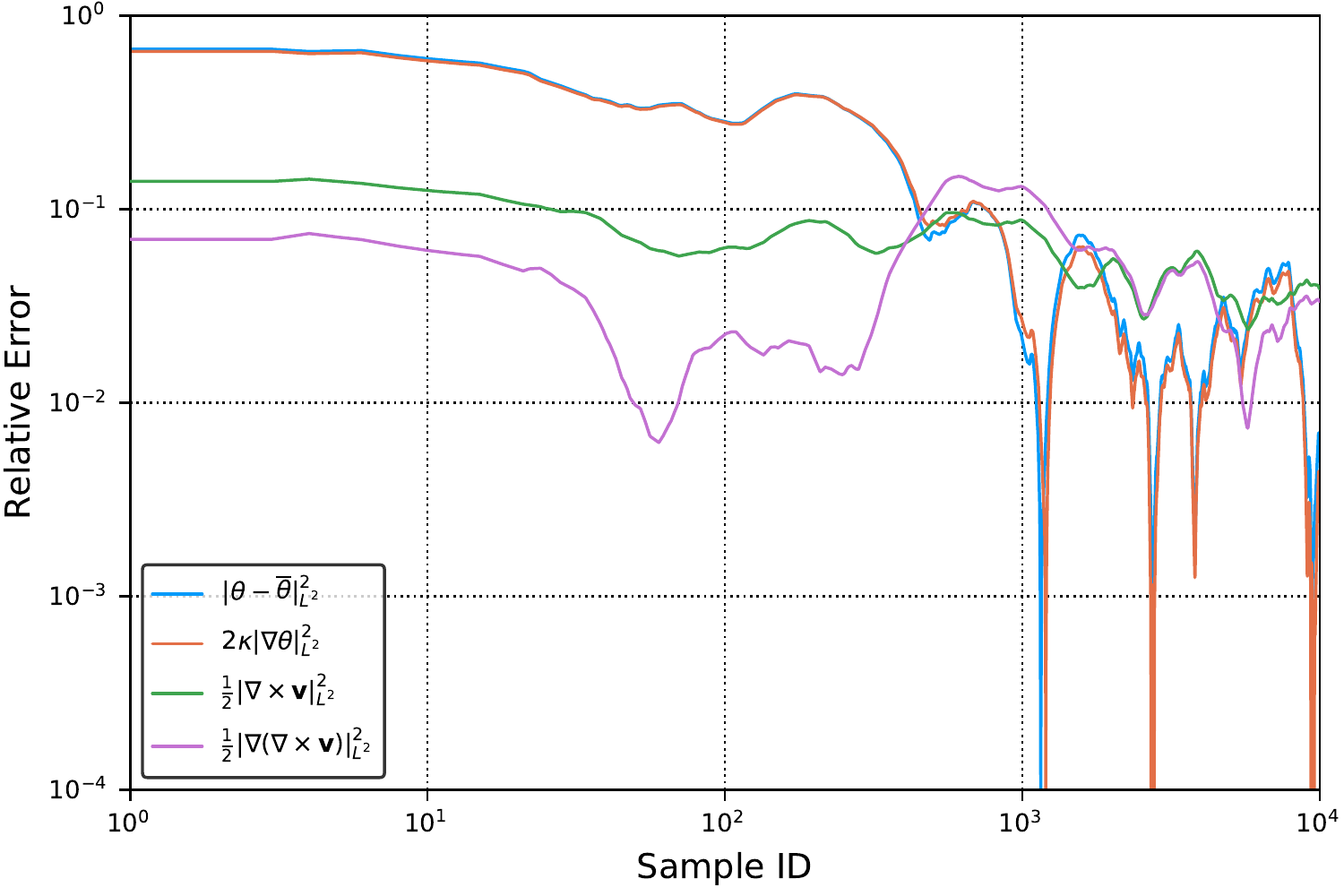}
  \end{minipage}
  \hfill
  \begin{minipage}[b]{0.49\textwidth}
  	\includegraphics[width=\textwidth]{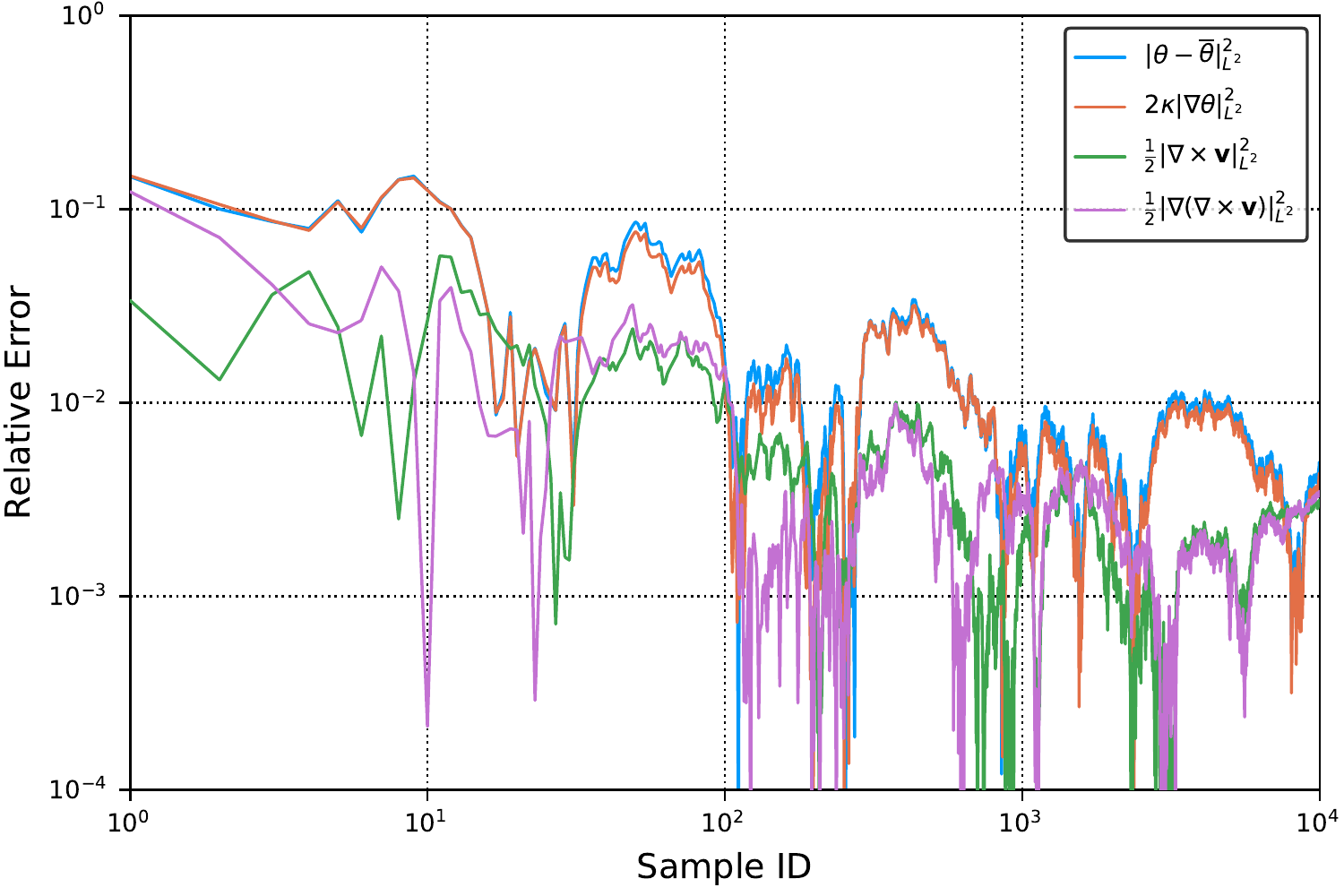}
  \end{minipage}

  \caption{\label{fig:ex1_obsEvolve}Relative error for the mean (cumulative moving average) of observables scalar variance, scalar dissipation rate, enstrophy, and enstrophy dissipation for 10,000 samples, Example 1. Left: pCN, Right: HMC.}
\end{figure}

More challenging observables to resolve are scalar differences (see \cref{tab:observables}), which require resolving $\pdesol$ at two different locations. 
\cref{fig:ex1_scalarDiffEvolve} shows convergence results for the mean values (cumulative moving averages) of scalar differences with $\x=[0,0]$ and $\mathbf{r_i}=2^{-i}[1,1]$ for $i=1,2,3,4$, up to 100,000 samples (10 times longer than the results shown in \cref{fig:ex1_obsEvolve}).
Convergence for scalar differences is much slower than for the observables in \cref{fig:ex1_obsEvolve}, though again we see that the relative error decays more quickly for HMC than for pCN. For both methods, the convergence is fairly uniform across scales -- the various scalar differences converge at the same rates.
\begin{figure}[!htbp]
  \centering
  \begin{minipage}[b]{0.49\textwidth}
  	\includegraphics[width=\textwidth]{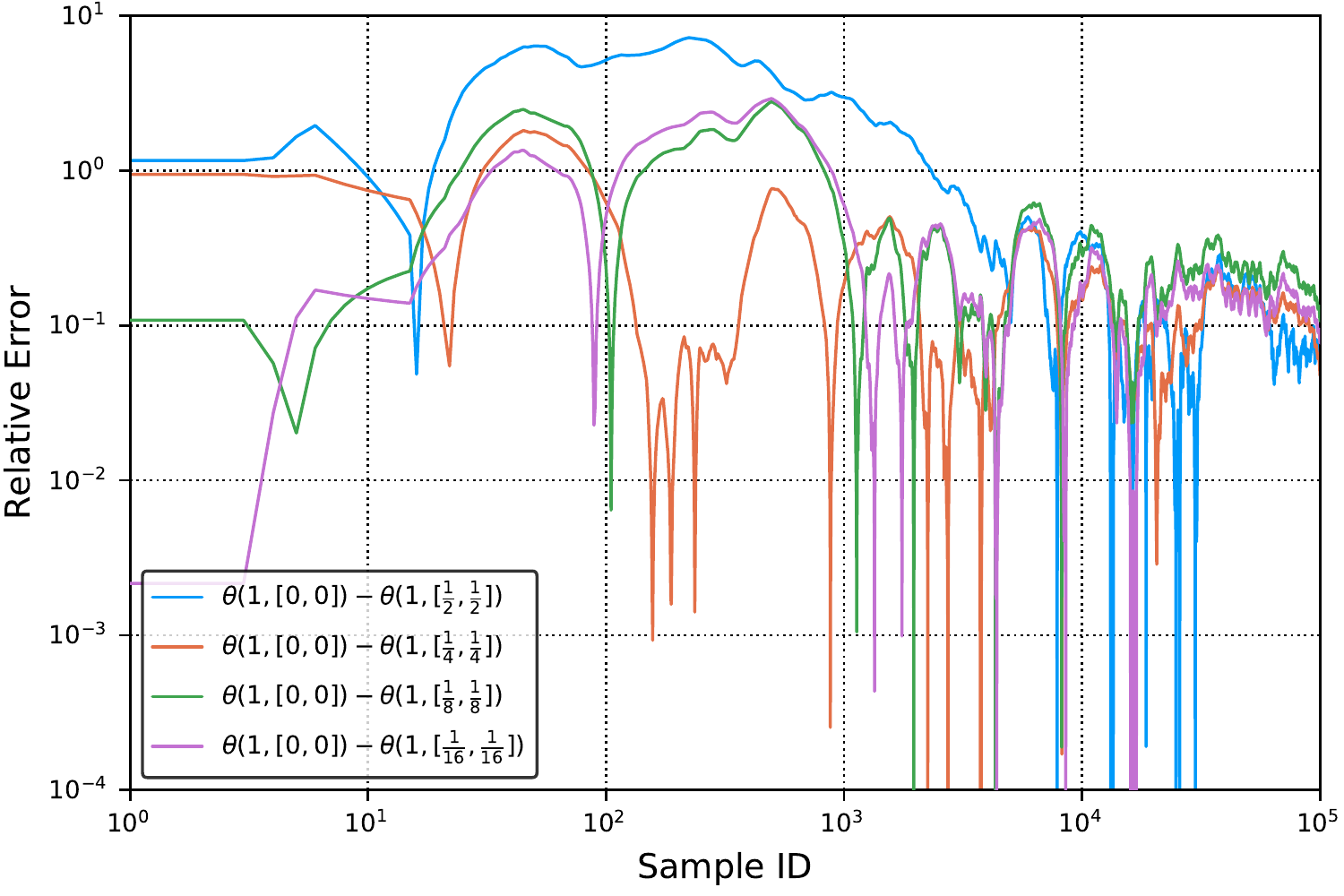}
  \end{minipage}
  \hfill
  \begin{minipage}[b]{0.49\textwidth}
  	\includegraphics[width=\textwidth]{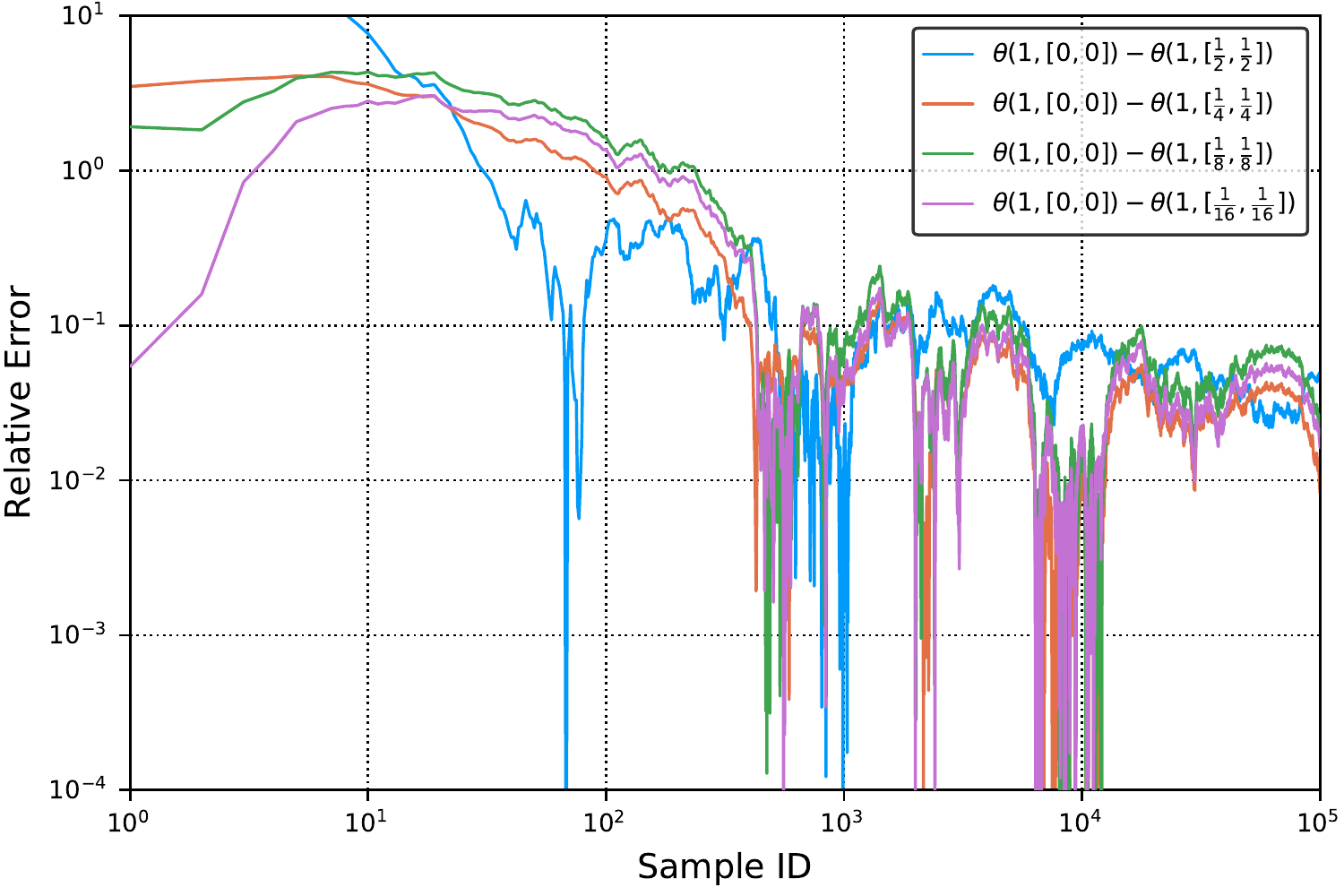}
  \end{minipage}
  
  \caption{\label{fig:ex1_scalarDiffEvolve}Relative error for the mean (cumulative moving average) of scalar differences at $t=1$ between the origin and $\left[\frac{1}{2},\frac{1}{2}\right]$, $\left[\frac{1}{4},\frac{1}{4}\right]$, $\left[\frac{1}{8},\frac{1}{8}\right]$, $\left[\frac{1}{16},\frac{1}{16}\right]$ for 100,000 samples, Example 1. Left: pCN, Right: HMC.}
\end{figure}

\subsection{Example 2} 
\label{sec:ex2_obsConverge}\label{sec:ex2_mcmcCompare}
In this section, we compare convergence of observables for the multimodal problem in Example 2 (see \cref{sec:ex2_multihump}).  
\cref{fig:ex2_obsEvolve} shows the cumulative moving
average of scalar variance, scalar dissipation rate, enstrophy, and
enstrophy dissipation (see \cref{tab:observables}) for pCN and HMC. The means for both methods
converge, though HMC converges in an order of magnitude fewer samples
than pCN.

\begin{figure}[!htbp]
  \centering
  \begin{minipage}[b]{0.49\textwidth}
  	\includegraphics[width=\textwidth]{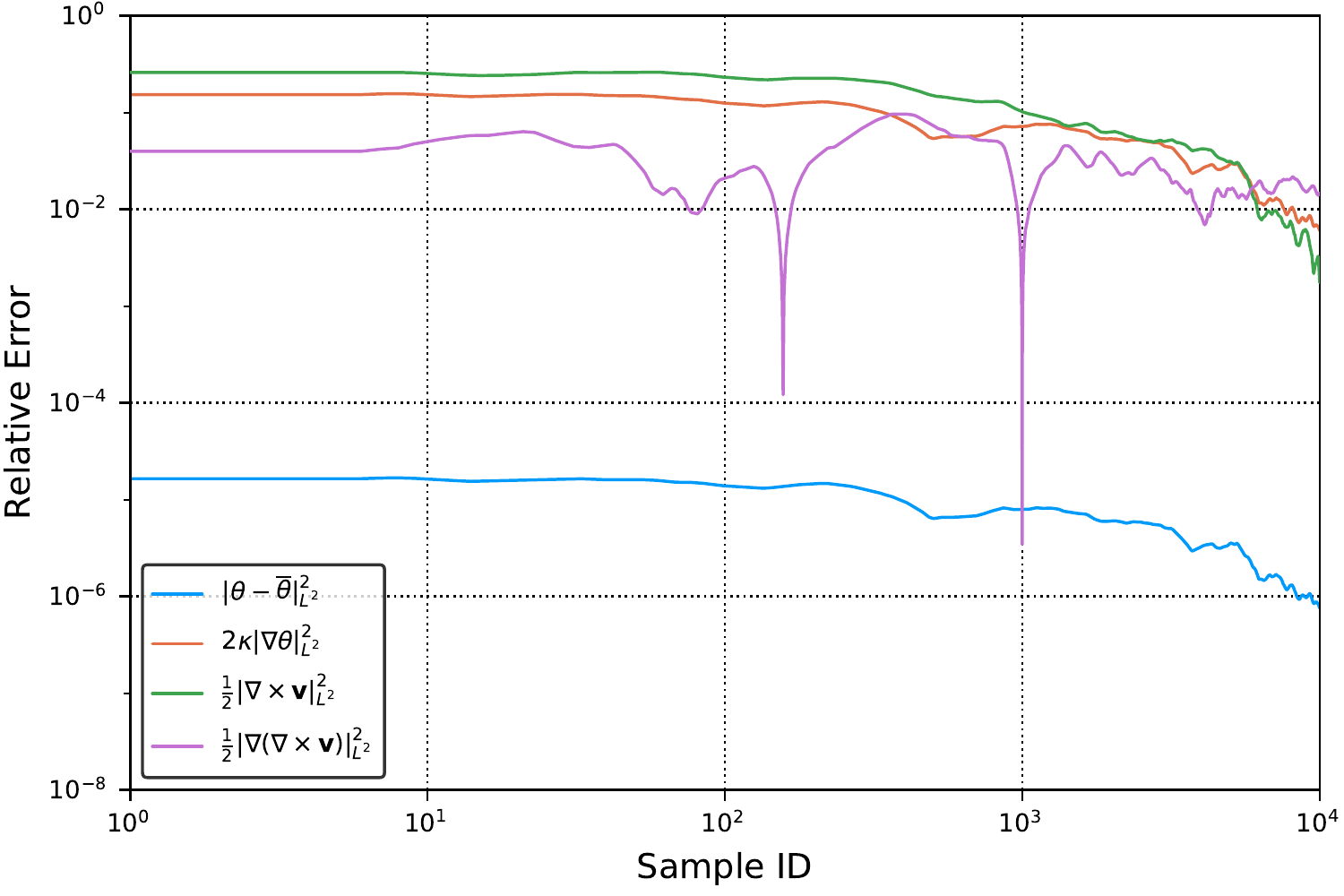}
  \end{minipage}
  \hfill
  \begin{minipage}[b]{0.49\textwidth}
  	\includegraphics[width=\textwidth]{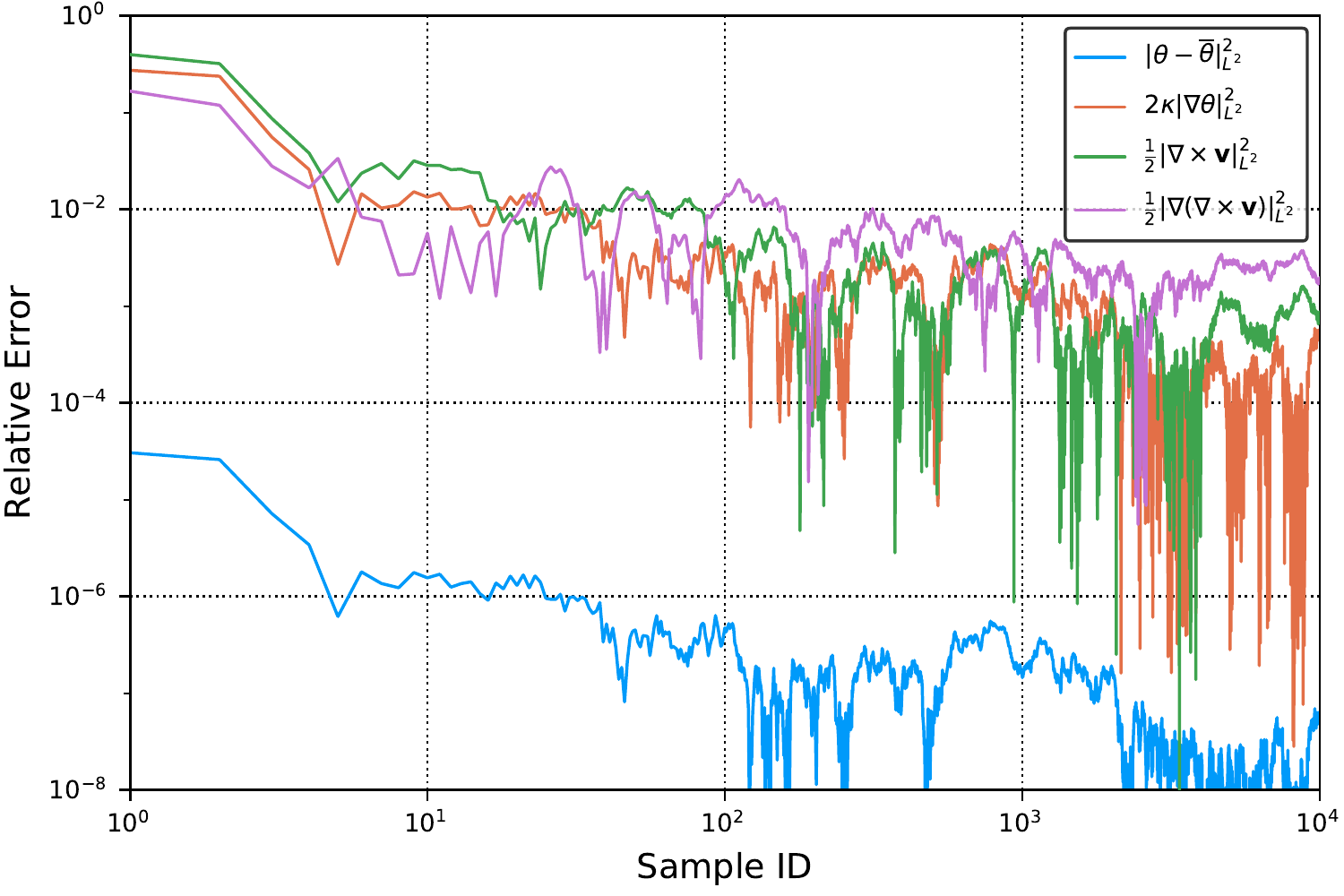}
  \end{minipage}
  \caption{\label{fig:ex2_obsEvolve}Relative error for the mean (cumulative moving average) of observables scalar variance, scalar dissipation rate, enstrophy, and enstrophy dissipation for 10,000 samples, Example 2. Left: pCN, Right: HMC.}
\end{figure}

\cref{fig:ex2_scalarDiffEvolve} shows similar convergence plots for
scalar differences (see \cref{tab:observables}), which proved much harder for the MCMC methods to
resolve; the figure shows results through 150,000 samples (15 times
more samples than in \cref{fig:ex2_obsEvolve}). 
Scalar differences across small distances proved much more difficult for 
the methods to resolve than the longer distances; pCN, in particular, shows 
almost no convergence for the two shorter-range differences. The analogous plots for 
HMC begin to exhibit a sawtooth shape as the number of samples grows; this is the 
result of balancing between the number of samples that the chain produces in each of the various
probability modes due to the jumps seen in \cref{fig:ex2_trace}.

\begin{figure}[!htbp]
  \centering
  \begin{minipage}[b]{0.49\textwidth}
  	\includegraphics[width=\textwidth]{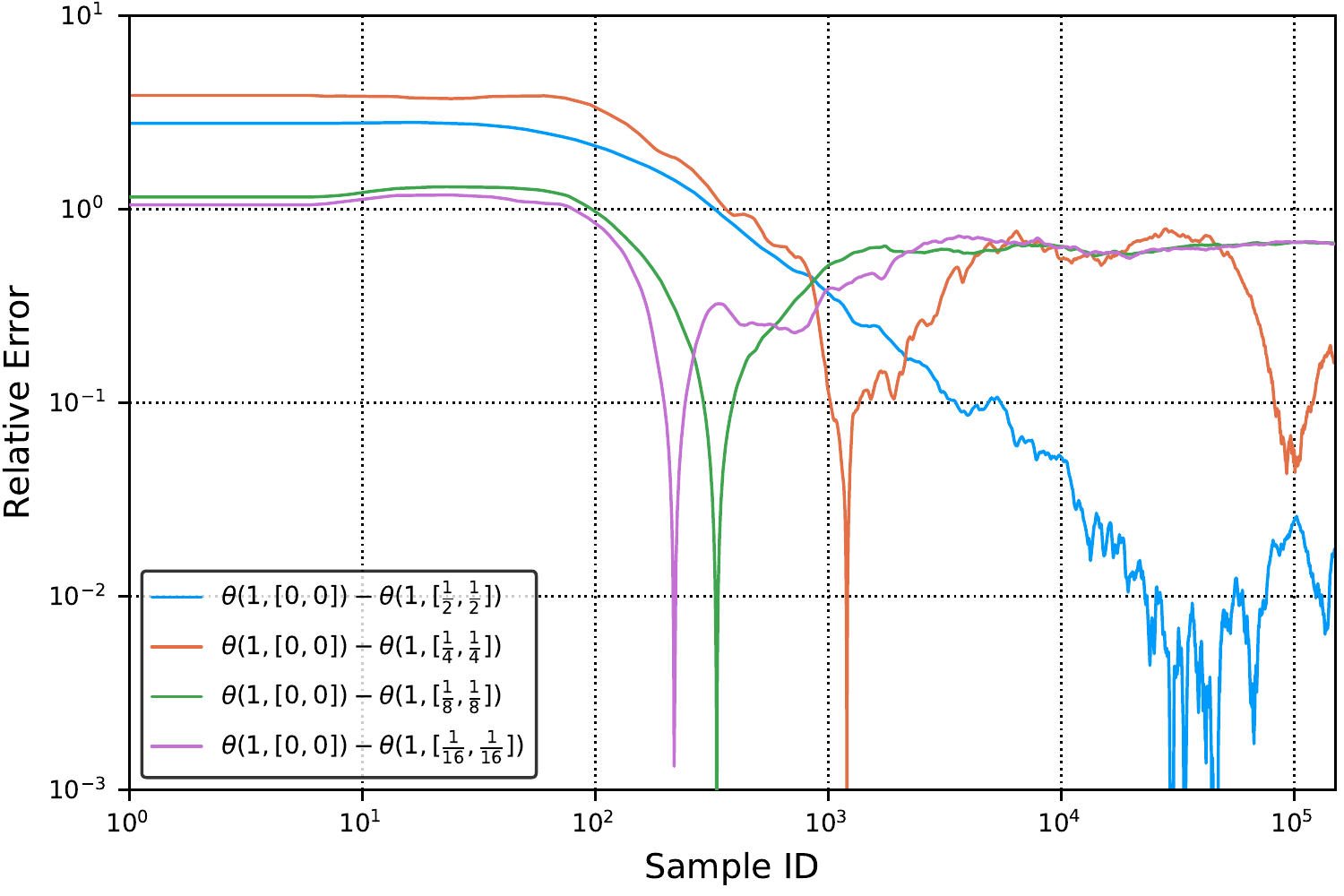}
  \end{minipage}
  \hfill
  \begin{minipage}[b]{0.49\textwidth}
  	\includegraphics[width=\textwidth]{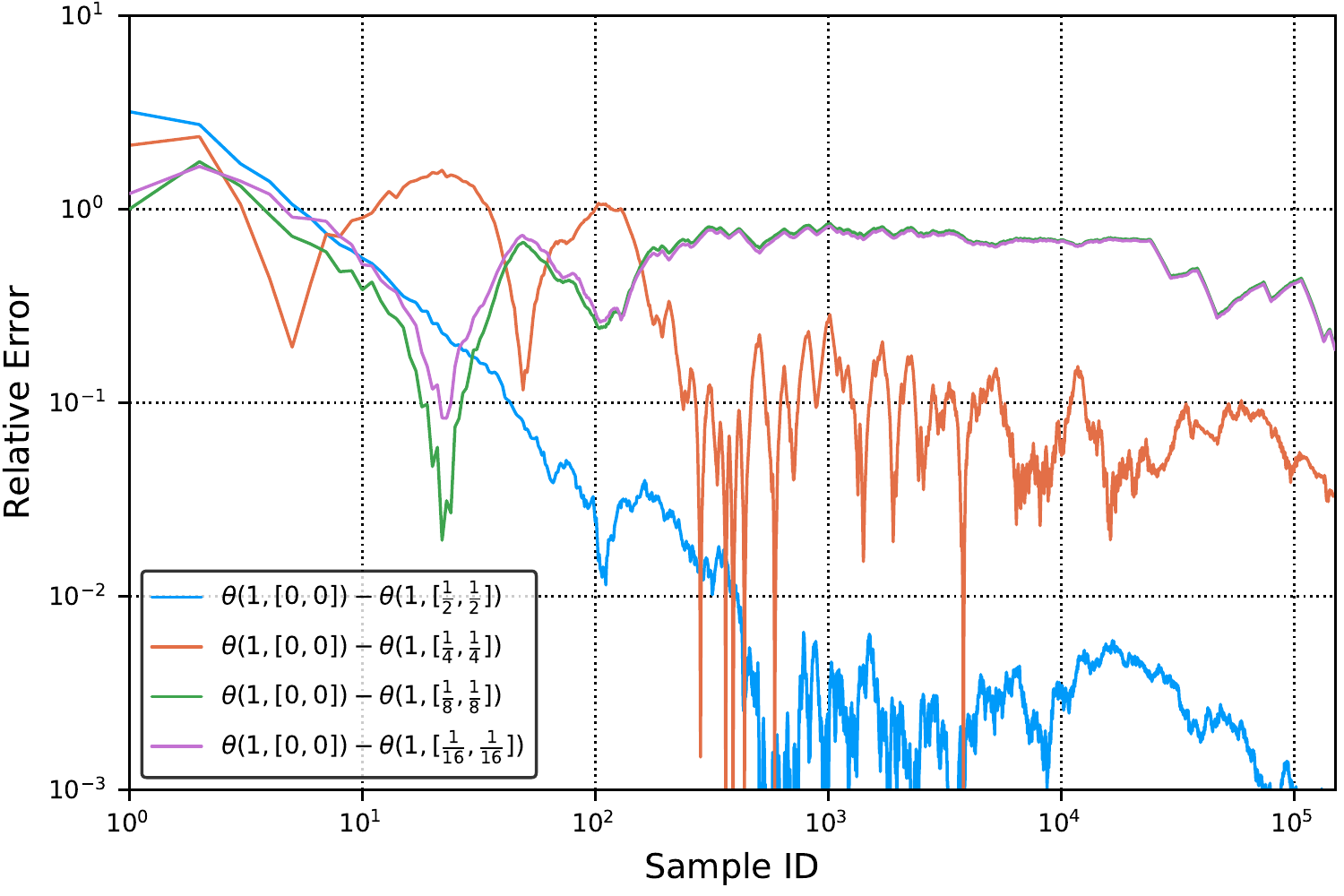}
  \end{minipage}
  \caption{\label{fig:ex2_scalarDiffEvolve}Relative error for the mean (cumulative moving average) of scalar differences at $t=1$ between the origin and $\left[\frac{1}{2},\frac{1}{2}\right]$, $\left[\frac{1}{4},\frac{1}{4}\right]$, $\left[\frac{1}{8},\frac{1}{8}\right]$, $\left[\frac{1}{16},\frac{1}{16}\right]$ for 100,000 samples, Example 2. Left: pCN, Right: HMC.}
\end{figure}

\section{A General Setting for Bayes' Theorem}\label{sec:bayes_general}

In this Appendix we consider an infinite dimensional setting for a
Bayesian Theorem applicable to a broad class of statistical inverse
problems that includes the problem considered in this paper.  Our presentation is
slightly more general than most treatments, e.g.~\cite{dashti2017bayesian};
namely we do not assume an additive noise structure in the
observational error or suppose that the prior distribution and
observation noise are independent.  

The problem at hand is to estimate an unknown parameter $\urv$ sitting in
a separable Hilbert space $\usp$ and subject to an observational noise
$\nrv$.  The forward model is given as
\begin{align}
	\yrv = \mathcal{F}(\urv, \nrv).
	\label{eq:forward}
\end{align}
Here $\Fd : \usp \times \nsp \to \ysp$ for possibly different
$N, M > 0$ and we assume that $\Fd$ is a Borel measurable map between
the given spaces.\footnote{We could consider the more general case
  when $\nsp$ and $\ysp$ are replaced by seperable Hilbert spaces in what follows.
  However, we avoid this additional complication for simplicity of
  presentation and since we are primarily interested situations involving a
  finite number of observations.} We treat $\urv$ and $\nrv$ as random
variables on an underlying probability space
$(\Omega, \mathcal{A}, \Prb)$.  The elements $\urv$ and $\nrv$ are
distributed as $\mu_0 \in Pr(\usp)$ and $\gamma_0 \in Pr(\nsp)$
respectively.  Note that we will not assume that $\urv$ and $\nrv$ are
statistically independent in general.  In the language of Bayesian statistical
inversion, $\mu_0$ is the prior distribution on our unknown parameter
$\urv$ and $\gamma_0$ is the distribution of the measurement noise
$\nrv$. Let $\lambda_0 \in Pr(\RR^N)$ denote the distribution of
$\yrv$.

We wish to rigorously define the conditional probabilities
$\yrv| \urv = \usm$ and $\urv | \yrv =\ysm$.  The former represents
the `likelihood of an observed data set $\yrv$ given $\urv = \usm$' while the
later is `the Bayesian posterior distribution for $\urv$ given
$\yrv = \ysm$'.  

For this purpose we recall some classical definitions around conditional 
expectations and probabilities from abstract
probability theory.  For further generalities germane to our 
discussions here, see e.g.~\cite{durrett2010probability, dudley2018real}.
\begin{Def}
\label{def:Cond:Exp:Precise:1}
Given a $\sigma$-algebra $\mathcal{H} \subseteq \mathcal{A}$ and a
random variable $Z$ the \emph{conditional expectation of $Z$ given
$\mathcal{H}$ denoted $\E(Z| \mathcal{H})$} is the unique (up
to a set of measure zero) random variable such that
\begin{align}
  \label{eq:cond:exp:0}
 \E(Z| \mathcal{H}) \text{ is measurable with respect to } \mathcal{H} 
\end{align}
and such that
\begin{align}
  \label{eq:cond:exp}
  \E(\E(Z| \mathcal{H})\indFn{A}) = \E( Z \indFn{A}) \quad
  \text{ for any } A \in \mathcal{H}.
\end{align}
Given another random variable $W$ we typically abuse notation and
write $\E(Z|W)$ for $\E(Z|\mathcal{H}_W)$ where $\mathcal{H}_W$
is the $\sigma$-algebra generated by $W$.  Futhermore,
we denote $\Prb(Z \in A| W) := \E( \indFn{Z \in A} | W)$.
\end{Def}
We next remind the reader of the definition of a regular conditional distribution as
\begin{Def}[Regular Conditional Distribution]
  Consider random variables $Z_1, Z_2$ taking values in the complete
  metric spaces $(X_1,d_1)$ and $(X_2,d_2)$, respectively.  We denote
  the Borel $\sigma$-algebra associated with $(X_2,d_2)$ by
  $\mathcal{B}_2$.  A \emph{regular conditional distribution} $\rcd$
  associated with $Z_1, Z_2$ is any function
  $\rcd: X_1 \times \mathcal{B}_2 \to [0,1]$ such that:
  \begin{itemize}
  \item[(i)] For every $z \in X_1$, $\rcd(z, \cdot)$ is a probability
    measure on $\mathcal{B}_2$ and, for every $A \in \mathcal{B}_2$,
    $\rcd(\cdot, A)$ is a Borel measurable function on $X_1$.
   \item[(ii)] For each $A \in \mathcal{B}_2$, 
      \begin{align}
        \rcd(Z_1(\omega), A) = \Prb(Z_2 \in A | Z_1)(\omega)
        \quad \text{ for almost every $\omega \in \Omega$}.
        \label{eq:Reg:C:D:1}
      \end{align}
  \end{itemize}
\end{Def}
We now define 
\begin{Def}
  \label{def:Cond:Exp:Precise:2}
  Relative to a given regular conditional distribution $\rcd$, we define the
  distribution $Z_1 | Z_2 = z_2$ rigorously as $\rcd(z_2, \cdot)$.
\end{Def}
To clarify these definitions, several remarks are in order.  
\begin{Rmk}
\mbox{}
\begin{itemize}
\item[(i)] For any two random variables $Z_1, Z_2$ an associated
  regular conditional distribution always exists; see
  \cite[Theorem 5.1.9]{durrett2010probability} for a construction.
\item[(ii)] In the case when $Z_1,Z_2$ take values in $\RR^n$ and are
  jointly, continuously distributed according to the probability density
  function $p$, then for any probability density function $g$,
  \begin{align}
    \label{eq:rcd:cont}
    \rcd(z, A) :=  
    \begin{cases}
      \frac{\int_A p(z,w) dw}{\int p(z,w) dw} & \text{ if } \int p(z,w) dw > 0,\\
      \int_A g(w) dw & \text{ otherwise},
    \end{cases}
  \end{align}
  defines a regular conditional distribution for $Z_1$, $Z_2$.
\item[(iii)] As illustrated by the previous example, regular conditional
  distributions are not unique in general as the choice of $g$ in 
  \eqref{eq:rcd:cont} was arbitrary.
\end{itemize}
\end{Rmk}

Let us now fix regular conditional distributions 
\begin{align}
	\QQ: \usp \times \mathcal{B}(\ysp) \to [0,1]
  \label{eq:y:u:rcd}
\end{align}
to define $\yrv| \urv = \usm$ and 
\begin{align}
	\mu: \ysp \times \mathcal{B}(\usp) \to [0,1]
  \label{eq:u:y:rcd}
\end{align}
to make sense of $\urv | \yrv =\ysm$. For more compact notation
below we will sometimes write $\QQ_\usm(\cdot) := \QQ(\usm, \cdot)$
and $\mu_{\ysm}(\cdot) := \mu(\ysm, \cdot)$.

While we now have a rigorous definition of $\mu_{\ysm}$ we would like to  
make sense of the usual Bayesian formulation
\begin{align*}
  \text{``posterior distribution $\propto$ likelihood(\ysm) $\times$ prior distribution"}
\end{align*}
in this general setting.  It turns out that all that is needed to
derive such a formula is the existence of a distribution $\gamma$ such that conditional probabilities $\QQ_\usm$, $\usm \in H$
are absolutely continuous with respect to $\gamma$.\footnote{Recall the a probability
  distribution $\rho$ is absolutely continuous with respect to another
  distribution $\tilde{\rho}$ if $\rho(A) = 0$ whenever
  $\tilde{\rho}(A) = 0$.  We typically denote this relationship by
  $\rho << \tilde{\rho}$.}
\begin{Prop}
	Assume there exists a distribution $\gamma \in Pr(\ysp)$ such that
	\begin{align}
          \QQ_{\usm} < < \gamma \quad \text{ for every } \usm \in \usp
          \label{eq:abs:cnt:cond}
	\end{align}
	and suppose that the resulting Radon-Nikodym derivative
        $\frac{d\QQ_{\usm}}{d \gamma}(\ysm)$ is measurable in $\usm$
        and $\ysm$.  Define
	\begin{align}
          Z(\ysm, \gamma) 
          := \int_{\usp} \frac{d\QQ_{\usm}}{d \gamma}(\ysm) \mu_0(d\usm).
          \label{eq:gen:norm}
	\end{align}
  Then, for any $\tilde{\mu} \in Pr(\usp)$,
	\begin{align}
          \label{eq:bayes:L:Pr}
          \mu_{\ysm} (d\usm) = \mu(\ysm, d\usm)
          :=
          \begin{cases}
            \frac{1}{ Z(\ysm, \gamma)}  
            \frac{d\QQ_{\usm}}{d \gamma}(\ysm) \mu_0(d\usm),
            &\text{ if }
            Z(\ysm, \gamma) > 0\\
            \tilde{\mu}(d\usm), &\text{ otherwise},
          \end{cases}
	\end{align}
        defines a regular condition distribution for $\urv | \yrv =\ysm$
        in the sense of \cref{def:Cond:Exp:Precise:2}.
  \label{thm:bayes_general}
\end{Prop}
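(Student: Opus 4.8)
The plan is to verify directly that the candidate $\mu_\ysm$ in \eqref{eq:bayes:L:Pr} satisfies the two defining properties of a regular conditional distribution, applied with conditioning variable $Z_1 = \yrv$ and target $Z_2 = \urv$. Property (i) is essentially bookkeeping: for fixed $\ysm$ with $Z(\ysm,\gamma) > 0$ the prefactor $1/Z(\ysm,\gamma)$ normalizes the nonnegative measure $\frac{d\QQ_\usm}{d\gamma}(\ysm)\,\mu_0(d\usm)$ to unit mass, while on $\{Z(\cdot,\gamma) = 0\}$ we fall back on the probability measure $\tilde\mu$; Borel measurability of $\ysm \mapsto \mu_\ysm(A)$ follows from the assumed joint measurability of the Radon--Nikodym derivative together with Tonelli's theorem applied to \eqref{eq:gen:norm}, which simultaneously shows that $\{Z(\cdot,\gamma)>0\}$ is Borel.

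The substance is property (ii), namely that $\mu(\yrv, A) = \Prb(\urv \in A \mid \yrv)$ almost surely for each fixed $A \in \mathcal{B}(\usp)$. Since $\mu(\yrv,A)$ is already $\sigma(\yrv)$-measurable by (i), the defining relation \eqref{eq:cond:exp} for conditional expectation reduces this to the identity
\[
  \E\!\left[ \mu(\yrv, A)\, \indFn{\yrv \in B} \right] = \Prb(\urv \in A,\ \yrv \in B) \qquad \text{for all } B \in \mathcal{B}(\ysp),
\]
because every event in $\sigma(\yrv)$ is of the form $\{\yrv \in B\}$. I would establish this by computing each side separately and comparing.

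For the right-hand side I use that $\QQ$ is a regular conditional distribution for $\yrv \mid \urv$: the tower property together with \eqref{eq:Reg:C:D:1} gives $\Prb(\urv \in A, \yrv \in B) = \int_A \QQ_\usm(B)\,\mu_0(d\usm)$, and then \eqref{eq:abs:cnt:cond} lets me expand $\QQ_\usm(B) = \int_B \frac{d\QQ_\usm}{d\gamma}(\ysm)\,\gamma(d\ysm)$, so that Tonelli (justified by joint measurability and nonnegativity) yields $\int_B \big( \int_A \frac{d\QQ_\usm}{d\gamma}(\ysm)\,\mu_0(d\usm)\big)\,\gamma(d\ysm)$. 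Taking $A = \usp$ in this very computation identifies the law $\lambda_0$ of $\yrv$: it is absolutely continuous with respect to $\gamma$ with density $\frac{d\lambda_0}{d\gamma}(\ysm) = Z(\ysm,\gamma)$ from \eqref{eq:gen:norm}, the linchpin of the argument. For the left-hand side I then write $\E\!\left[\mu(\yrv,A)\,\indFn{\yrv \in B}\right] = \int_B \mu(\ysm,A)\,\lambda_0(d\ysm) = \int_B \mu(\ysm,A)\,Z(\ysm,\gamma)\,\gamma(d\ysm)$, and on $\{Z > 0\}$ the product $\mu(\ysm,A)\,Z(\ysm,\gamma)$ collapses, by the definition \eqref{eq:bayes:L:Pr}, to exactly $\int_A \frac{d\QQ_\usm}{d\gamma}(\ysm)\,\mu_0(d\usm)$, matching the right-hand side.

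The one place that requires genuine care --- and the main obstacle --- is the exceptional set $\{Z(\cdot,\gamma)=0\}$, where $\mu_\ysm = \tilde\mu$ is arbitrary and the two formulas for $\mu_\ysm$ need not agree. Here I argue the contribution vanishes on both sides: the left integrand is $\mu(\ysm,A)\cdot 0 = 0$, while nonnegativity forces $0 \le \int_A \frac{d\QQ_\usm}{d\gamma}(\ysm)\,\mu_0(d\usm) \le Z(\ysm,\gamma) = 0$ so the right integrand vanishes as well. Hence the two integrands agree $\gamma$-almost everywhere, and since $\lambda_0 \ll \gamma$ this $\gamma$-a.e.\ identity suffices to conclude. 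This last step also explains why the conclusion is insensitive to the choice of $\tilde\mu$ (up to $\lambda_0$-null sets), consistent with the non-uniqueness of regular conditional distributions remarked on earlier.
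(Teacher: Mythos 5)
Your proof is correct and follows essentially the same route as the paper's: both verify the defining identity of a regular conditional distribution by computing $\E\left(\indFn{\urv \in A}\indFn{\yrv \in B}\right)$ through the tower property and the Radon--Nikodym expansion of $\QQ_{\usm}$ against $\gamma$, apply Tonelli to swap the order of integration, and dispose of the exceptional set $\{Z(\cdot,\gamma)=0\}$ by noting that $\int_A \frac{d\QQ_{\usm}}{d\gamma}(\ysm)\,\mu_0(d\usm)$ is dominated by $Z(\ysm,\gamma)$. The only cosmetic differences are that you make the identity $\frac{d\lambda_0}{d\gamma}=Z(\cdot,\gamma)$ explicit where the paper routes the same computation through its auxiliary lemma \eqref{eq:double:cond}, and that you test against indicators $\indFn{A}$ rather than general bounded measurable $\phi$ --- which is exactly what the definition of a regular conditional distribution requires.
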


Before turning to the proof we make the following simple observation
\begin{Lem}
For any bounded and measurable $\psi: \ysp \to \RR$, 
\begin{align}
	\E \psi(\yrv) 
  = \int_{\ysp} \psi(\ysm) \lambda_0(d\ysm) 
  = \int_{\usp} \int_{\ysp} \psi(\ysm) \QQ(\usm, d\ysm) \mu_0(d\usm),
  \label{eq:double:cond}
\end{align}
where $\lambda_0$ is the distribution of $\yrv$ defined by \eqref{eq:forward}
and $\QQ$ is the regular conditional distribution (\ref{eq:y:u:rcd}).

\end{Lem}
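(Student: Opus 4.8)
The plan is to derive both equalities from the defining properties of the regular conditional distribution $\QQ$ in \eqref{eq:y:u:rcd} together with the tower property of conditional expectation. The first equality, $\E\psi(\yrv) = \int_{\ysp} \psi(\ysm)\,\lambda_0(d\ysm)$, is essentially definitional: since $\lambda_0$ is the law (pushforward under $\Prb$) of the random variable $\yrv$, the standard change-of-variables formula for pushforward measures gives it immediately for any bounded measurable $\psi$. Thus all of the real content lies in the second equality.

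First I would establish the disintegration identity
\[
  \E\big(\psi(\yrv)\,\big|\,\urv\big)(\omega)
  = \int_{\ysp} \psi(\ysm)\,\QQ(\urv(\omega), d\ysm)
  \qquad \text{for almost every } \omega,
\]
valid for every bounded measurable $\psi$. The base case $\psi = \indFn{A}$ with $A \in \mathcal{B}(\ysp)$ is exactly property (ii) of the regular conditional distribution, i.e.~\eqref{eq:Reg:C:D:1}, since $\Prb(\yrv \in A \mid \urv) = \E(\indFn{\yrv \in A} \mid \urv)$. I would then extend to general bounded measurable $\psi$ by the usual measure-theoretic argument: linearity of both sides handles nonnegative simple functions; monotone convergence (applied to the conditional expectation on the left, and to the genuine measure $\QQ(\usm, \cdot)$ from property (i) on the right) extends the identity to nonnegative measurable $\psi$; and writing $\psi = \psi^+ - \psi^-$ covers the bounded case. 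Property (i) also guarantees that $\usm \mapsto \int_{\ysp} \psi(\ysm)\,\QQ(\usm, d\ysm)$ is Borel measurable, which is what legitimizes the right-hand side as a version of the conditional expectation.

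With the disintegration in hand, I would take $\Prb$-expectations of both sides and apply the tower property (take $A = \Omega$ in \eqref{eq:cond:exp}), giving
\[
  \E\psi(\yrv)
  = \E\Big[\int_{\ysp} \psi(\ysm)\,\QQ(\urv, d\ysm)\Big].
\]
The integrand is a measurable function of $\urv$ alone, namely $g(\urv)$ with $g(\usm) := \int_{\ysp} \psi(\ysm)\,\QQ(\usm, d\ysm)$, so one final change of variables against the law $\mu_0$ of $\urv$ yields $\E[g(\urv)] = \int_{\usp} g(\usm)\,\mu_0(d\usm)$, which is precisely the claimed double integral.

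I expect the only delicate point to be the standard-machine extension in the second step, specifically the measurability of $\usm \mapsto \int_{\ysp} \psi(\ysm)\,\QQ(\usm, d\ysm)$ and the interchange of limits under conditional expectation; both are routine consequences of properties (i)--(ii) and the boundedness of $\psi$. Notably, no structural hypothesis on $\Fd$ and no independence of $\urv$ and $\nrv$ is required for this lemma.
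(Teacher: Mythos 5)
Your proposal is correct and follows essentially the same route as the paper: both reduce to indicator functions via property (ii) of the regular conditional distribution, apply the tower property together with a change of variables against $\mu_0$, and then extend to general bounded measurable $\psi$ by linearity and a standard density/monotone-class argument. The only (immaterial) difference is that you perform the standard-machine extension at the level of conditional expectations before integrating, whereas the paper integrates the indicator identity first and extends afterward.
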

\begin{proof}
  Notice that 
  \begin{align*}
    \Prb( \yrv \in B) 
    =& \E \indFn{\yrv \in B}
    = \E(\E (\indFn{\yrv \in B}| \urv))
    = \E \QQ(\urv, B)
    = \int_{\usp} \QQ(\usm, B) \mu_0(d\usm)\\
    =& \int_{\usp} \int_{\ysp} \chi_B(\ysm)  \QQ(\usm, d\ysm) \mu_0(d\usm).
  \end{align*}
  Thus, by linearity we have shown \eqref{eq:double:cond} 
  for simple functions $\phi$.  We can now extend to the general
  case by a standard density argument.
\end{proof}
We turn now to 
\begin{proof}[Proof of \cref{thm:bayes_general}]
  We need to verify that $\mu_y$ given by \eqref{eq:bayes:L:Pr}
  satisfies the conditions for \cref{def:Cond:Exp:Precise:1}.
  The regularity properties in (i) are immediate from the given
  assumptions on $\frac{d\QQ_\usm}{d\gamma}$.  We
  verify (ii) by showing that, cf.~\eqref{eq:cond:exp},
  \begin{align}
    \E (\int_H \phi(\usm) \mu(\yrv, d\usm) \indFn{\yrv \in B})
    	= \E (\phi(\urv) \indFn{\yrv \in B})
    \label{eq:Bayes:ID:sf:cond}
  \end{align}
  for any $B \in \mathcal{B}(\ysp)$, and any bounded and measurable
  $\phi: H \to \RR$.  Using elementary properties of conditional
  expectations (see \cite[Chapter 5]{durrett2010probability}),
  recalling the definition of $\QQ$ in \eqref{eq:y:u:rcd}, and finally
  using \eqref{eq:abs:cnt:cond} we have
\begin{align*}
  \E (\phi(\urv) \indFn{\yrv \in B}) 
  =& \E (\E (\phi(\urv) \indFn{\yrv \in B}| \urv))
  = \E (\phi(\urv) \E ( \indFn{\yrv \in B}| \urv))
  = \E (\phi(\urv) \QQ(\urv, B))\\
  =& \int_{\usp} \phi(\usm) \QQ(\usm, B) \mu_0(d\usm)
     =\int_{\usp} \phi(\usm) 
     \int_{\ysp} \chi_B(\ysm) \QQ(\usm, d\ysm) \mu_0(d\usm)\\
  =& \int_{\usp} \phi(\usm) 
     \int_{\ysp} \chi_B(\ysm) \frac{d\QQ_{\usm}}{d\gamma}(\ysm) 
                             \gamma(d\ysm) \mu_0(d\usm)\\
  =& \int_{\ysp} \chi_B(\ysm)
     \int_{\usp} \phi(\usm)  \frac{d\QQ_{\usm}}{d\gamma}(\ysm) \mu_0(d\usm)
                             \gamma(d\ysm).
\end{align*}
On the other hand, we can show
\begin{align}
  Z(\ysm, \gamma) \int_{\usp} \phi(\usm) \mu(\ysm, d\usm)  
  =  \int_{\usp} \phi(\usm) \frac{d\QQ_{\usm}}{d \gamma}(\ysm) \mu_0(d\usm),
  \label{eq:bayes:norm:identity}
\end{align}
for any $y \in \ysp$.  When $Z(\ysm,\gamma)>0$, \eqref{eq:bayes:norm:identity} is true by definition of $\mu$ (see \eqref{eq:bayes:L:Pr}), and when $Z(\ysm,\gamma)=0$, we have $\frac{d\QQ_{\usm}}{d \gamma}(\ysm)=0$ $\mu_0$-almost surely (see \eqref{eq:gen:norm}), so both sides of \eqref{eq:bayes:norm:identity} are zero. Therefore, by combining the previous two identities
and recalling \eqref{eq:gen:norm} we find
\begin{align}
  \E (\phi(\urv) \indFn{\yrv \in B}) 
  =& \int_{\ysp} \chi_B(\ysm) Z(\ysm, \gamma)\int_{\usp} 
     \phi(\usm) \mu(y, d\usm)
                             \gamma(d\ysm)
  \notag\\
 =& \int_{\ysp} \int_{\usp}  \chi_B(\ysm) 
      \int_{\usp} \phi(\usm) \mu(y, d\usm)
    \frac{d\QQ_{\mathbf{u}}}{d \gamma}(\ysm) \mu_0(d \mathbf{u}) 
      \gamma(d\ysm)
  \notag\\
  =& \int_{\usp} \int_{\ysp} \chi_B(\ysm) \int_{\usp} \phi(\usm) \mu(y, d\usm)
     \QQ(\mathbf{u}, d\ysm) \mu_0(d\mathbf{u}).
\label{eq:final:bayes:ID}
\end{align}
Taking $\psi(y) = \chi_B(\ysm)\int_H \phi(\usm) \mu(y, d \usm)$ in
(\ref{eq:double:cond}) and combining this identity with
\eqref{eq:final:bayes:ID} now finally yields
\eqref{eq:Bayes:ID:sf:cond}, completing the proof.
\end{proof}

\section*{Acknowledgments}
This work was supported in part by the National Science Foundation
under grants DMS-1313272 (NEGH), DMS-1816551 (NEGH), DMS-1522616 (JTB), and DMS-1819110 (JTB); the National Institute for
Occupational Safety and Health under grant 200-2014-59669 (JTB); and the
Simons Foundation under grant 515990 (NEGH). We would also like to thank the
Mathematical Sciences Research Institute, the Tulane University Math
Department, and the International Centre for Mathematical Sciences
(ICMS) where significant portions of this project were developed and
carried out. The Society for Industrial and Applied Mathematics (SIAM)
and NSF grant DMS-1613965, NSF grant 1700124, and the Virginia Tech
Interdisciplinary Center for Applied Math (ICAM), respectively,
provided funds for travel to the 2016 Gene Golub SIAM Summer School,
2016 FOMICS Winter School on Uncertainty Quantification, and 2017 ICMS
workshop Probabilistic Perspectives in Nonlinear Partial Differential
Equations, where JK presented the preliminary results of this work and received
valuable feedback. We would also like to thank G. Didier, J. Foldes,
S. McKinley, C. Pop, G. Richards, G. Simpson, A. Stuart, and
J. Whitehead for many useful discussions and thoughtful feedback on
this work. 
The authors acknowledge Advanced Research Computing at Virginia Tech\footnote{\url{http://www.arc.vt.edu}} for providing computational resources and technical support that have contributed to the results reported within this paper.

\addcontentsline{toc}{section}{References}
\begin{footnotesize}
\bibliographystyle{plain}
\bibliography{references}
\end{footnotesize}

\vspace{1.5in}
\begin{multicols}{2}
\noindent
Jeff Borggaard\\
{\footnotesize Department of Mathematics\\
Virginia Tech\\
Web: \url{https://www.math.vt.edu/people/jborggaa/}\\
Email: \href{mailto:jborggaard@vt.edu}{\nolinkurl{jborggaard@vt.edu}}} \\[.5cm]
Nathan Glatt-Holtz\\ {\footnotesize
Department of Mathematics\\
Tulane University\\
Web: \url{http://www.math.tulane.edu/~negh/}\\
Email: \href{mailto:negh@tulane.edu}{\nolinkurl{negh@tulane.edu}}} \\[.2cm]

\columnbreak

 \noindent Justin Krometis\\
{\footnotesize
Advanced Research Computing\\
Virginia Tech\\
Web: \url{https://www.arc.vt.edu/justin-krometis/}\\
Email: \href{mailto:jkrometis@vt.edu}{\nolinkurl{jkrometis@vt.edu}}} \\[.2cm]
 \end{multicols}

\end{document}